\newtheorem{theorem}{Theorem}[section]
\newtheorem{lemma}[theorem]{Lemma}
\newtheorem{proposition}[theorem]{Proposition}
\newtheorem{corollary}[theorem]{Corollary}
\theoremstyle{definition}
\newtheorem{definition}[theorem]{Definition}
\newtheorem{example}[theorem]{Example}
\theoremstyle{remark}
\newtheorem{remark}[theorem]{Remark}
\numberwithin{equation}{section}
\newtheorem{assump}[theorem]{Assumption}
\def\ind{{\mathcal I}}
\def\sL{{\star_{L}}}
\def\sLs{\widetilde{\star}_{L}}
\newcounter{quotecount}
\begin{document}
\setcounter{page}{1}

\title[  Expansion of traces on  manifolds with boundary  ]{  Expansion of traces and Dixmier traceability  for global pseudo-differential operators on  manifolds with boundary}

\author[D. Cardona S\'anchez]{Duv\'an Cardona }
\address{
  Duv\'an Cardona:
  \endgraf
  Department of Mathematics: Analysis, Logic and Discrete Mathematics
  \endgraf
  Ghent University, Belgium
  \endgraf
  {\it E-mail address} {\rm duvanc306@gmail.com, Duvan.CardonaSanchez@UGent.be}
  }
  
\author[V. Kumar]{Vishvesh Kumar}
\address{
  Vishvesh Kumar:
  \endgraf
  Department of Mathematics: Analysis, Logic and Discrete Mathematics
  \endgraf
  Ghent University, Belgium
  \endgraf
  {\it E-mail address} {\rm vishveshmishra@gmail.com, Kumar.Vishvesh@UGent.be}
  }

\author[M. Ruzhansky]{Michael Ruzhansky}
\address{
  Michael Ruzhansky:
  \endgraf
  Department of Mathematics: Analysis, Logic and Discrete Mathematics
  \endgraf
  Ghent University, Belgium
  \endgraf
 and
  \endgraf
  School of Mathematical Sciences
  \endgraf
  Queen Mary University of London
  \endgraf
  United Kingdom
  \endgraf
  {\it E-mail address} {\rm Michael.Ruzhansky@UGent.be}
  }

\author[N. Tokmagambetov]{Niyaz Tokmagambetov}
\address{
  Niyaz Tokmagambetov:
  \endgraf
  Department of Mathematics: Analysis, Logic and Discrete Mathematics
  \endgraf
  Ghent University, Belgium
  \endgraf
  {\it E-mail address} {\rm Niyaz.Tokmagambetov@UGent.be}
  }

\subjclass[2010]{Primary {22E30; Secondary 58J40}.}

\keywords{Manifolds with boundary, Pseudo-differential operators, Dixmier traces, Regularised traces}

\thanks{The authors are supported by the FWO Odysseus 1 grant G.0H94.18N: Analysis and Partial Differential Equations. MR is also supported in parts by the EPSRC grant
EP/R003025/2.}

\begin{abstract}
Given a smooth  manifold $M$ (with or without boundary), in this paper we study the regularisation of traces for the global pseudo-differential calculus  in the context of non-harmonic analysis. Indeed, using the global pseudo-differential calculus on manifolds (with or without boundary) developed in  \cite{RT2016}, the Calder\'on-Vaillancourt Theorem and the global  functional calculus in \cite{CardonaKumarRuzhanskyTokmagambetov2020II}, we determine the singularity orders in the regularisation of traces and the sharp regularity orders for the Dixmier traceability of  the global H\"ormander classes.  Our  analysis (free of coordinate systems) allows us to obtain non-harmonic analogues of several classical results arising from the microlocal analysis of regularised traces for pseudo-differential operators with symbols defined by localisations.
\end{abstract} \maketitle

\tableofcontents
\allowdisplaybreaks
\section{Introduction}
\subsection{Outline}
 One of the  main objectives of this paper is to study trace expansions for operators of the form $A\psi(tE),$ $t>0,$ where $\psi$ is positive smooth compactly supported function defined on $\mathbb{R}_0^+$ (or $\psi(\lambda):=e^{-t\lambda}$), $E$ is a positive $L$-elliptic global pseudo-differential operator and $A$ is a global pseudo-differential operator, using the global pseudo-differential calculus on manifolds with or without boundary developed in \cite{RT2016} (which is also stable under the complex functional  calculus as it was shown in  \cite{CardonaKumarRuzhanskyTokmagambetov2020II}) with additions in \cite{RT2017} and \cite{DRT2017}. Our second main objective is to study the Dixmier traceability of the $L$-elliptic global pseudo-differential operators. The main novelty of the used approach is that our analysis of traces is free of coordinate systems, presenting a new point of view, in comparison with other works on the subject, see   \cite{Seeley, Seeley3, Seeley2,Grubb,GrubbSeeley,GrubbSchrohe} and references therein for instance. 

The trace expansions play a very vital role in several areas of mathematics including spectral geometry, mathematical physics, index theory, etc. (see \cite{Gilkey, Lesch2010}). Many authors studied the trace expansions for the prototype examples of $\psi(t)$ like $e^{-t z}$ and $t^{-z}$ leading to interesting trace expansions like heat trace expansion and zeta functions. In this paper we will also study heat trace expansions. The study of trace expansion can be traced back to H\"ormander \cite{Hormander1968} and,   Duistermaat and Guillemin \cite{DG75} using the machinery of Fourier integral operators and require the fact that the Fourier transform of $\psi$ is compactly supported near $0$. Seeley \cite{Seeley, Seeley3, Seeley2}, Grubb \cite{Grubb}, Grubb and Seeley \cite{GrubbSeeley}, and Grubb and Schrohe \cite{GrubbSchrohe} investigated  trace expansions using the pseudo-differential calculus. Seeley's ideas are based upon the meromorphy in the complex parameter. Recently, Fischer \cite{Fischer1} investigated real trace expansion for the operator of the form $A\psi(t\mathcal{L}),$ where $A$ and $\mathcal{L}$ are classical pseudo-differential operators on a compact manifold with $\mathcal{L}$ elliptic. The main tool she used is the continuous inclusion of the functional calculus of $\mathcal{L}$ into the pseudo-differential calculus whose proof relies on the Helffer-Sj\"ostrand formula. In \cite{Fischer2}, she also studied traces expansions on the torus and on more general compact Lie groups using the global symbolic calculus developed by Ruzhansky and Turunen \cite{Ruz}. The first and third named authors have  investigated the traces expansion on compact Lie groups by using the notion of global symbol and global ellipticity for subelliptic pseudo-differential calculus \cite{CardonaRuzhansky2020I}. In addition to this, they have also investigated the Dixmier tracebility of subelliptic pseudo-differential operators on compact Lie groups. In this paper, we use the methods developed in \cite{CardonaRuzhansky2020I} by first and third author to establish our results suitably adopted to our setting.

The global pseudo-differential calculus on a compact Lie group $G$ can be thought as the calculus based on the eigenfunction expansion of the Laplacian on $G.$ Building upon this idea, last two authors \cite{RT2016} developed a global pseudo-differential calculus on compact manifolds with  boundary using the Fourier series associated with bi-orthogonal eigenfunction expansion (also known as non-harmonic analysis) of a model operator $L$ (need not be self-adjoint or elliptic) with discrete spectrum, see also \cite{DRT2017}. This calculus on manifolds with (possibly empty) boundary can be considered as a non-harmonic analogy of the pseudo-differential calculus constructed in \cite{Ruz} by using the Fourier analysis attached to every compact Lie group.   

\subsection{Main results}
The aim of this paper is to characterise the spectral asymptotics for the $L^2$-traces of pseudo-differential operators associated with boundary value problems, by using the notion of global symbol,  introduced in \cite{RT2016}. This calculus uses a global definition of symbols using the notion of non-harmonic analysis on smooth manifolds (see \cite{RT2016} for details). To illustrate our main results, let us recall that any pseudo-differential operator $A:C^\infty(M)\rightarrow C^\infty(M),$ where $A\in \Psi^{m}_{1,0}(M, loc)$ is of order $m$ and $M$ is a closed manifold, is of trace class if $m<-n,$ with $n:=\dim(M)$ the topological dimension of $M.$ Moreover, if $dx$ is the volume form on $M,$ and  if $K\in C^{\infty}(M)\otimes_{\pi}\mathscr{D}'(M) $ is the Schwartz kernel of $A,$ then
\begin{equation*}
     \textnormal{\bf {Tr}}(A)=\int\limits_{M}K_A(x,x)dx.
\end{equation*}
  
It is known that in the case where $m>-n,$ $A$ may not be of trace class.  In the study of the spectral properties of $A,$ one can consider two kinds of regularisation processes, one with real methods  and another with the complex methods by taking complex powers. We will only consider  real methods in this paper. In fact, we consider a positive elliptic pseudo-differential operator\footnote{in the sense of \cite{RT2016}.} $E$ on $M,$ (which can be closed or with boundary) and we study the function \begin{equation}\label{Heatkernel}
    f_{A,E}(t):=\textnormal{\bf {Tr}}(Ae^{-tE}),
\end{equation} 
as a function of $t>0,$ or we consider the trace 
\begin{equation}\label{Cutoff}
    \Tilde{f}_{A,E}(t):=\textnormal{\bf {Tr}}(A\psi(tE)),
\end{equation}
where $\psi\in C^\infty_0(M)$ is a non-negative function. In both cases, it is expected that the functions $f_{A,E}(t)$ and $\Tilde{f}_{A,E}(t)$  provide spectral information of the operator $A,$ as well as, the geometric information of the manifold $M$. This fact motivates one of the fundamental problems in geometric analysis: the study of the regularisation of traces.   In view of the McKean-Singer index formula for the Atiyah-Singer index theorem
\begin{equation}\label{index}
     \textnormal{\bf ind}(T)= g_{I,T^*T}(t)-g_{I,T T^*}(t),\quad t>0,
\end{equation} 
the problem of
determining the coefficients of the asymptotic expansions of \eqref{Heatkernel}, is the starting point in the theory of invariants of Gilkey \cite{Gilkey}. Certainly, explicit expressions and geometric densities for the right hand side of  \eqref{index} were constructed in the classical work of Atiyah, Bott and Patodi \cite{atiyabott2}. Because of the  recent methods in the theory of pseudo-differential operators, where the global notion of symbols have shown to be a versatile tool\footnote{developed for  compact Lie groups by the third author and Turunen in \cite{Ruz}, for arbitrary smooth manifolds (with or without boundary) in \cite{RT2016,RT2017}, for arbitrary graded Lie groups in \cite{FischerRuzhanskyBook}, for general locally compact Lie groups of type I in \cite{M1,M2}, and for sub-Riemannian structures on compact Lie groups  in \cite{CardonaRuzhansky2020I}.}, we study the spectral asymptotics for traces of global pseudo-differential operators on compact manifolds defined by the quantisation procedure in  \cite{RT2016,RT2017}. One of the novelties of the present work, is that the global structure of the symbols in \cite{RT2016,RT2017}, allows us to simplify the analysis of regularisation of  traces in comparison with the classical  methods of spectral geometry for the calculus of H\"ormander \cite{Hormander1985III} for closed manifolds, or the techniques employed in the calculus for pseudo-differential operators on manifolds with boundary (see Schrohe \cite{Schrohe},  Grubb \cite{Grubb}, Grubb and Schrohe \cite{GrubbSchrohe} and Scott \cite{Scott} just to mention a few).

For  a pseudo-differential operator $L$ of order $\nu>0$ (in the sense of H\"ormander) on the interior of ${M},$ in  \cite{RT2016,RT2017} the authors associated a pseudo-differential calculus for continuous linear operators on $C^\infty_{L}(M)=\bigcap\limits_{k=0}^{\infty}\textnormal{Dom}(L^k)$ using the nonharmonic Fourier analysis related with $L.$ The classes of operators associated to this calculus are denoted by $\Psi^{m}_{\rho,\delta}(M\times \mathcal{I}),$\footnote{where $\mathcal{I}$ is the set indexing the discrete spectrum $\{\lambda_\xi:\xi\in \mathcal{I}\}$ of $L.$} and associated to every $A:C^\infty_{L}(M)\rightarrow C^\infty_{L}(M),$ with $A\in \Psi^{m}_{\rho,\delta}(M\times \mathcal{I}),$ there exists a unique function (symbol) $\sigma_A: M\times \mathcal{I}\rightarrow \mathbb{C}, $ such that
\begin{equation}\label{QuantizationIntro}
Af(x)=\sum_{\xi\in\mathcal{I}}
u_{\xi}(x)\sigma_{A}(x, \xi) \widehat{f}(\xi).
\end{equation}
Here $u_{\xi},$ $\xi\in \mathcal{I},$ are the eigenfunctions of $L,$ and the Fourier coefficients $\widehat{f}(\xi)$ are defined in  Section \ref{Sect2}, as well as, the suitable condition requested on $L$ in order that the quantisation formula \eqref{QuantizationIntro} will be well defined. From the properties of this calculus, we can assume  the Weyl eigenvalue counting formula (see Lemma \ref{equivalence} and the condition (WL) in Section \ref{expansionshere}), $$N(\lambda):=|\{\xi\in \mathcal{I}:(1+|\lambda_\xi|^2)^\frac{1}{2\nu}\leq \lambda \}|\sim \lambda^{Q},\,\quad \lambda\rightarrow\infty.$$  We will prove that (see Theorem \ref{asymptotictracemultiplier2}) any   continuous linear operator $A:C^\infty_L(M)\rightarrow\mathcal{D}'_L(M)$ with symbol  $\sigma\in {S}^{m}_{\rho,\delta}(M\times   \mathcal{I})$, $m\in \mathbb{R},$  satisfies that\footnote{with $\mathcal{M}_q:=(1+L^{\circ}L)^{\frac{q}{2\nu}},$ and $L^\circ$ is the conjugate to $L,$ defined in \eqref{EQ:Lo-def}.}
\begin{equation}\label{asymp122Intro}
  |\textnormal{\bf {Tr}}   (Ae^{-t\mathcal{M}_q})|\leq  c_{m,Q} t^{-\frac{Q+m}{q}}\int\limits_{t^{\frac{1}{q}}}^{\infty}e^{-s^{q}}s^{Q+m-1}ds,\,\,\forall t>0.
\end{equation}
In particular, for $m=-Q,$ we have
\begin{equation}\label{asymp1212Intro}
   |\textnormal{\bf {Tr}}  (Ae^{-t\mathcal{M}_q})|\leq  -c_Q\frac{1}{q}\log(t),\,\,\forall t\in (0,1),
\end{equation}while for $m>-Q,$ we have the estimate
\begin{equation}\label{asymp1212'Intro}
 |\textnormal{\bf {Tr}}(Ae^{-t\mathcal{M}_q})|\leq  t^{-\frac{Q+m}{q}}\left(  \sum_{k=0}^{\infty}b_k't^{\frac{k}{q}}\right),\,\,t\rightarrow 0^{+}.
\end{equation}
Under similar hypothesis, we will prove in Theorem \ref{asymptotictraceeta}, that 
\begin{equation}\label{asymp122222Intro}
    |\textnormal{\bf{Tr}}(A\psi(t E))|\leq C_{Q} \frac{1}{q}\int\limits_{ t^{\frac{1}{q}}  }^\infty\psi(s) \frac{ds}{s},\,\,\forall t>0,
\end{equation}
provided that $\psi\in L^{1}(\mathbb{R}^{+}_0;\frac{ds}{s})\bigcap C^{\infty}_0(\mathbb{R}^{+}_0)$ is positive, and $m=-Q.$ On the other hand, for $m>-Q$ and $\psi\in C^{\infty}_{0}(\mathbb{R}^+_0)$ being positive, we have
\begin{equation}\label{asymp1212222Intro}
       |\textnormal{\bf{Tr}}(A\psi(t E))|\leq C_{m,Q} t^{-\frac{1}{q}(Q+m)}\frac{1}{q}\int\limits_{ t^{\frac{1}{q}}  }^\infty\psi(s)s^{\frac{Q+m}{q}}\times \frac{ds}{s},\,\,\forall t>0.
\end{equation}So, we have the estimate (see \eqref{asymp1212'22})
\begin{equation}\label{asymp1212'22Intro}
|\textnormal{\bf{Tr}}(A\psi(t E))|\leq t^{-\frac{Q+m}{q}}\left(  \sum_{k=0}^{\infty}a_kt^{\frac{k}{q}}\right),\,\,t\rightarrow 0^{+}
\end{equation}for $m> -Q.$  
\begin{remark}If $L$ admits a self-adjoint extension on $L^2(M),$ the estimates \eqref{asymp1212'Intro} and \eqref{asymp1212'22Intro} can be improved  to  asymptotic expansions, provided that the symbol  $\sigma\in {S}^{m}_{\rho,\delta}(M\times   \mathcal{I})$, $m\in \mathbb{R},$ of the continuous linear operator $A:C^\infty_L(M)\rightarrow\mathcal{D}'_L(M)$ will be  positive\footnote{this means, its symbol satisfies $\sigma(x,\xi)\geq 0$ for all $(x,\xi)\in M\times \mathcal{I}.$} and $L$-elliptic\footnote{that is, there exist constants $C_0>0$ and $N_0\in\mathbb N$ such that 
$  |\sigma(x,\xi)| \geq C_0 \langle\xi\rangle^m $
for all $(x,\xi)\in M\times\ind$ for which
$\langle \xi \rangle \geq N_0,$
where  $\langle\xi\rangle:=(1+|\lambda_\xi|^2)^\frac{1}{2\nu},$  $\xi\in \mathcal{I}.$}. For details see Theorem \ref{L*=L1} and Theorem \ref{L*=L2}. In particular, Theorem \ref{L*=L1} gives the asymptotic expansions, 
\begin{equation}\label{ResidueIntro}
   \textnormal{\bf {Tr}}  (Ae^{-t\mathcal{M}_q}) \sim  -c_Q\frac{1}{q}\log(t),\,\,\forall t\in (0,1),
\end{equation} and 
\begin{equation}\label{ExpansionIntro}
 \textnormal{\bf {Tr}}(Ae^{-t\mathcal{M}_q}) \sim C_{m,Q} t^{-\frac{Q+m}{q}}\left(  \sum_{k=0}^{\infty}b_k't^{\frac{k}{q}}\right),\,\,t\rightarrow 0^{+},
\end{equation} for $m>-Q.$ Additionally, note that   for a closed manifold  $M,$ and for $L=\sqrt{\Delta_M},$ with $\Delta_M$ being  the positive Laplacian on $M,$    $\partial M=\emptyset,$ and  $Q=\dim(M).$ Taking $\mathcal{M}_q:=(1+\Delta_M)^{\frac{q}{2}},$ and an  elliptic, positive and classical pseudo-differential operator $A$ with order $m,$ \eqref{ResidueIntro} and  \eqref{ExpansionIntro} recover the classical Pleijel type formula for the expansion of traces (see Atiyah, Bott, and  Patodi \cite{atiyabott2} and Remark \ref{remarkintro} below) while that the sharp version of \eqref{asymp1212'22Intro}  in \eqref{321},
$$ \textnormal{\bf{Tr}}(A\psi(t E)) \sim t^{-\frac{Q+m}{q}}\left(  \sum_{k=0}^{\infty}a_kt^{\frac{k}{q}}\right),\,\,t\rightarrow 0^{+} ,
  $$
recovers, as a special case, the asymptotic expansion of the trace $\textnormal{\bf{Tr}}(A\psi(t E))$ in \cite{Fischer1}, taking as above,  $E\equiv\mathcal{M}_q:=(1+\Delta_M)^{\frac{q}{2}},$ $q>0,$ $Q=\dim(M),$ and $A$ being positive, classical of order $m$ (see Remark \ref{remarkintro}) and elliptic on $M$. 
\end{remark}

The analysis in proving the asymptotic formulae above along with the use of the Tauberian
theorem of Hardy and Littlewood in noncommutative geometry \cite{Connes94} and the functional calculus developed in \cite{CardonaKumarRuzhanskyTokmagambetov2020II} by the authors,  also implies that the operators with positive symbols in  $$\Psi^{-Q}_{\rho,\delta}(M\times \mathcal{I}),\quad 0\leq\delta<\rho\leq 1,$$ belong to the Dixmier ideal on $L^2(M)$. The present paper will be dedicated to proving the aforementioned statements.
\begin{remark}\label{remarkintro}
To illustrate the expansions above,  let us recall an interesting situation that comes from the spectral geometry of pseudo-differential operators with symbols defined by localisations. Let us precise this idea in detail. 
For  a compact orientable manifold without boundary $M$ of dimension $\varkappa,$  a  pseudo-differential operator $A$ on $M$ can be defined by using the notion of a local symbol, this means that for any local chart $U$, the operator $A$ has the form $$Au(x)=\int_{T^{*}_xU}e^{2\pi ix\cdot \xi}\sigma^A(x,\xi)\widehat{u}(\xi)\, d\xi.$$ The pseudo-differential operator $A$ is called classical, if $\sigma^A$ admits an asymptotic expansion 
$\sigma^A(x,\xi)\sim \sum_{j=0}^{\infty}\sigma^A_{m-j}(x,\xi)$ in such a way that each function $\sigma_{m-j}(x,\xi)$ is homogeneous in $\xi$ of order $m-j$ for $\xi\neq 0$. The set of classical pseudo-differential operators of order $m$ is denoted by $\Psi^m_{cl}(M)$. For $A\in\Psi_{cl}^m(M)$, and for $x\in M$, $\int_{|\xi|=1}\sigma_{-\varkappa}(x,\xi)\, d\xi$ defines a local density which can be glued over $M$. In this case, the non-commutative residue of $A$ is defined by the expression
\begin{equation}
\textnormal{res}\,(A)=\frac{1}{\varkappa(2\pi)^\varkappa}\int_{M}\int_{| \xi|=1}\sigma^A_{-\varkappa}(x,\xi)\, d\xi\,dx.
\end{equation}
If $E$ is a positive elliptic pseudo-differential operator of order $q>0,$ for every elliptic and positive pseudo-differential operator $A$ with order $m,$ $m\geqslant -\varkappa,$ we have 
 \begin{equation}\label{subellipticWR}
     \textnormal{\textbf{Tr}}(Ae^{-tE})\sim t^{-\frac{m+\varkappa}{q}}\sum_{k=0}^\infty a_kt^{\frac{k}{q}}-\frac{b_0}{q}\log(t)+O(1).
 \end{equation}If $m>-\varkappa,$ $b_0=0,$ and for $m=-\dim(M),$ $a_{k}=0$ for every $k,$ and $b_0=\textnormal{res}(A)$ is the Wodzicki residue of $A,$ see e.g. Wodzicki \cite{Wodzicki} and Lesch \cite{Lesch}.
\end{remark}Certainly, the asymptotic expansions that we investigate in this work, are  non-harmonic analogues  of the spectral expansions in \cite{CardonaRuzhansky2020I} and of the ones investigated in \cite{Fischer1,Fischer2}.

\section{Preliminaries: global operators on compact manifolds with boundary}\label{Sect2} 
In this section we briefly present basics of pseudo-differential calculus in the context on non-harmonic analysis developed \cite{RT2016} (see also \cite{DRT2017, CardonaKumarRuzhanskyTokmagambetov2020II}).
\subsection{The operator $L,$ its Fourier analysis, and its distribution spaces}
\begin{assump}[The operator $L$]
\label{Assumption_1} The topological space $M=\overline{\Omega}$ denotes a smooth orientable  manifold with (possibly empty)  boundary $\partial \Omega.$ We assume that $M$ is orientable and endowed with a volume form $dx.$ The non-harmonic analysis (Fourier analysis on $M$) will be associated to a continuous linear operator $L$ on $\Omega$ with the following characteristics:\footnote{which are satisfied by a wide class of operators appearing in spectral geometry including: Dirichlet boundary value problems and Neumann boundary value problems for the Laplacian on manifolds with the smooth  boundary (including planar domains and Euclidean submanifolds), the Steklov problems, harmonic and anharmonic oscillators, elliptic operators on closed manifolds, and many other families of elliptic operators on manifolds with boundary. For this aspects and for a long list of examples we refer the reader to  \cite{RT2016} and references therein.}
\begin{itemize}
\item[(A1):]
    Here, $L$ is a pseudo-differential operator (need not be self-adjoint) of order $\nu>0$ on ${\Omega},$ defined in the sense of H\"ormander\footnote{This means that the symbol of $L$ defined by local coordinate systems is  in the Kohn-Nirenberg class $\Psi^\nu_{1,0}(U)$ for some (and hence for all) coordinate patch $\phi:U\rightarrow M,$ for details see \cite{Hormander1985III}.}. 
 We assume $L$ equipped with some boundary
conditions $\textnormal{(BC)}$ on $\partial \Omega,$ and the corresponding boundary value problem, at times, will be denoted by $L_\Omega$.  

\item[(A2):] We assume the condition (BC+) that the boundary conditions $\textnormal{(BC)}$ define  a Fr\'echet topological space, in such a way that with respect to the family of seminorms introduced in Definition   \ref{TestFunSp}, $\textnormal{Dom}(L)$ and $\textnormal{Dom}(L^*)$ are closed subspaces of 
\begin{equation} \label{testfunction}
    C^{\infty}_L(\overline{\Omega}):=\bigcap_{k\in \mathbb{N}}\textnormal{Dom}(L^k_{\Omega})\textnormal{  and  }C^{\infty}_{L^*}(\overline{\Omega}):=\bigcap_{k\in \mathbb{N}}\textnormal{Dom}((L_{\Omega}^{*})^{k}),
\end{equation}
respectively, where $L^*$ is the adjoint of $L$ on $L^2(\overline{\Omega})$ and $\textnormal{Dom}(L_\Omega^k)$ (analogously, $\textnormal{Dom}((L_\Omega^*)^k)$) is defined by $$
{\rm Dom}(L_\Omega^{k}):=\{f\in L^{2}(\overline{\Omega}): \,\,\, {
L}^{j}f\in {\rm Dom}(L_\Omega), \,\,\, j=0, \,1, \, 2, \ldots,
k-1\}\footnote{We note that in this way all operators ${L}^{k}$, $k\in\mathbb N$, are being equipped with the same boundary conditions (BC).}.
$$ 

\item[(A3):] Assume that ${L}_\Omega$ has   spectrum purely discrete
$\{\lambda_{\xi}\in\mathbb C: \, \xi\in\mathcal{I}\}$
on $L^{2}(\overline{\Omega})$,
and we order the eigenvalues with
the occurring multiplicities in the ascending order:
\begin{equation}\label{EQ: EVOrder}
 |\lambda_{j}|\leq|\lambda_{k}| \quad\textrm{ for } |j|\leq |k|.
\end{equation}

\item[(A3)':] Let $u_{\xi}$ be  the eigenfunction of ${L}_{\Omega}$ corresponding to the
eigenvalue $\lambda_{\xi}$ for each $\xi\in\mathcal{I}.$
The system
$\{u_{\xi}: \, \xi\in\mathcal{I}\}$ is a basis in $L^{2}(\overline{\Omega})$, i.e.
for every $f\in L^{2}(\overline{\Omega})$ there exists a unique series
$\sum_{\xi\in\mathcal{I}} a_\xi u_\xi(x)$ that converges to $f$ in  $L^{2}(\overline{\Omega})$.
  Here the eigenfunctions $u_\xi$ satisfy the boundary conditions
(BC).

    \item[(A4):] Define the operator
${L}^{\circ}$ by setting its values on the basis $u_{\xi}$ by
\begin{equation}\label{EQ:Lo-def}
{L}^{\circ} u_{\xi}:=\overline{\lambda_{\xi}} u_{\xi},\quad
\text{ for all } \xi\in\mathcal{I}.
\end{equation} 
Set $ 
\langle\xi\rangle:=(1+|\lambda_{\xi}|^2)^{\frac{1}{2\nu}}.$ The system $\{\langle\xi\rangle\}_{\xi\in \mathcal{I}}$
determines the set of eigenvalues of the positive (first order) operator
$({\rm I}+{L^\circ\, L})^{\frac{1}{2\nu}}.$ We assume that for some 
$s_0\in\mathbb R$ we have
\begin{equation}\label{Assumption_4}
    \sum_{\xi\in\mathcal{I}} \langle\xi\rangle^{-s_0}<\infty.
\end{equation} We will also note in Lemma \ref{equivalence} that \textnormal{(A4)} is equivalent to the fact that $L$ satisfies the Weyl-eigenvalue counting formula.\footnote{Also, note that \eqref{Assumption_4} is equivalent to the fact that
 $({\rm I}+{L^\circ L})^{-\frac{s_0}{4\nu}}$ is Hilbert-Schmidt.
Indeed,  $({\rm I}+{L^\circ L})^{-\frac{s_0}{4\nu}}$ is Hilbert-Schmidt if and only if,
\begin{equation}\label{EQ:HS-conv}
\|({\rm I}+{L^\circ L})^{-\frac{s_0}{4\nu}}\|_{\tt HS}^2\cong \sum_{\xi\in\mathcal{I}}
\langle\xi\rangle^{-s_0}<\infty.
\end{equation}}
\item[(WZ):] We assume (only in Section \ref{Dixmiersection}) that the eigenfunctions
of both $L$ and ${L}^*$ satisfy the WZ-condition (WZ stands for `without zeros'), in the sense that the functions $u_{\xi}(x), \,
v_{\xi}(x)$ do not have zeros in the domain $\overline{\Omega}$
for all $\xi\in\mathcal{I}$, and if there exist $C>0$
and $N\geq0$ such that
$$
\inf\limits_{x\in\overline{\Omega}}|u_{\xi}(x)|\geq
C\langle\xi\rangle^{-N},\,\,
\inf\limits_{x\in\overline{\Omega}}|v_{\xi}(x)|\geq
C\langle\xi\rangle^{-N},
$$
as $\langle\xi\rangle\to\infty$.
\end{itemize}
\end{assump}
\begin{remark}For the aspects of the non-harmonic analysis associated to $L$ without the WZ-condition, we refer the reader to \cite{RT2017}. In particular for Section \ref{expansionshere} we do not assume the WZ-condition.  However, as we will use in our further analysis on Dixmier traces, the global functional calculus and the Calder\'on-Vaillancourt Theorem established in \cite{CardonaKumarRuzhanskyTokmagambetov2020II}, we require the WZ-condition in Section \ref{Dixmiersection}. In particular, we also make use of the (differential) difference structure on $\mathcal{I}$ provided by the difference operators defined in Subsection \ref{SEC:differences}, which allow us to define the H\"ormander classes $S^{m}_{\rho,\delta}(\overline{\Omega}\times \mathcal{I})$ that were introduced in \cite{RT2016}.
\end{remark}
\begin{remark} Note that, the conjugate spectral problem is
$ {
L^{\ast}}v_{\xi}=\overline{\lambda}_{\xi}v_{\xi}$ in $ \overline{\Omega} $  for all $  \xi\in\mathcal{I},
$
which we equip with the conjugate boundary conditions which we may denote by (BC)$^*$.
This adjoint problem will be denoted by ${L}_\Omega^*$.
Let $\|u_{\xi}\|_{L^{2}}=1$ and $\|v_{\xi}\|_{L^{2}}=1$ for all
$\xi\in\mathcal{I}.$ Here, we can take biorthogonal systems
$\{u_{\xi}\}_{\xi\in\mathcal{I}}$ and $\{v_{\xi}\}_{\xi\in\mathcal{I}}$,
i.e.
\begin{equation}\label{BiorthProp}
(u_{\xi},v_{\eta})_{L^2}=0 \,\,\,\, \hbox{for}
\,\,\,\, \xi\neq\eta, \,\,\,\, \hbox{and} \,\,\,\,
(u_{\xi},v_{\eta})_{L^2}=1 \,\,\,\, \hbox{for} \,\,\,\, \xi=\eta,
\end{equation}
where
$ (f, g)_{L^{2}}:=\int\limits_{M}f(x)\overline{g(x)}dx$ is the 
inner product of  $L^{2}(\overline{\Omega})$. Note that from 
\cite{bari}, $\{v_{\xi}:  \xi\in\mathcal{I}\}$ is a basis in $L^{2}(\overline{\Omega}),$  because of (A3)'.
\end{remark}

\begin{definition}[Test functions associated to $L_{\Omega} $ and $L_{\Omega}^*$]\label{TestFunSp}
The space
$C_{{L}}^{\infty}(\overline{\Omega})$ defined by \eqref{testfunction} is called the
space of test functions for ${L}_\Omega$. 
The Fr\'echet topology of $C_{{L}}^{\infty}(\overline{\Omega})$ is given by the family of norms
\begin{equation}\label{EQ:L-top}
\|\varphi\|_{C^{k}_{{L}}}:=\max_{j\leq k}
\|{L}^{j}\varphi\|_{L^2(\overline{\Omega})}, \quad k\in\mathbb N_0,
\, \varphi\in C_{{L}}^{\infty}(\overline{\Omega}).
\end{equation}
Analogously to the ${L}$-case,  the space $C_{{
L^{\ast}}}^{\infty}(\overline{\Omega})$ corresponding to the adjoint operator ${L}_\Omega^*$ is defined as in \eqref{testfunction} and the family of seminorms can be defined by
\begin{equation}\label{EQ:L-top-adj}
\|\psi\|_{C^{k}_{{L}^*}}:=\max_{j\leq k}
\|({L}^*)^{j}\psi\|_{L^2(\overline{\Omega})}, \quad k\in\mathbb N_0,
\ \psi\in C_{{L}^*}^{\infty}(\overline{\Omega}).
\end{equation}

Since we have $u_\xi\in C^\infty_{{L}}(\overline{\Omega})$ and
$v_\xi\in C^\infty_{{L}^*}(\overline{\Omega})$ for all $\xi\in\mathcal{I}$, we observe  that the spaces
$C^\infty_{{L}}(\overline{\Omega})$ and $C^\infty_{{L}^*}(\overline{\Omega})$
are dense in $L^2(\overline{\Omega})$.
\end{definition}
In general, for functions $f\in C_{{L}}^{\infty}(\overline{\Omega})$ and
$g\in C_{{L}^*}^{\infty}(\overline{\Omega})$, the $L^2$-duality makes sense in view
of the formula
\begin{equation}\label{EQ:duality}
({L}f, g)_{L^2(\overline{\Omega})}=(f,{L}^*g)_{L^2(\overline{\Omega})}.
\end{equation}
Therefore, in view of the formula \eqref{EQ:duality},
it makes sense to define the distributions $\mathcal D'_{{L}}(\overline{\Omega})$
as the space which is dual to $C_{{L}^*}^{\infty}(\overline{\Omega})$.

\begin{definition}[Distributions associated to $L_\Omega$ and $L_{\Omega}^*$]\label{DistrSp}
The space $$\mathcal D'_{{
L}}(\overline{\Omega}):=\mathcal L(C_{{L}^*}^{\infty}(\overline{\Omega}),
\mathbb C)$$ of linear continuous functionals on
$C_{{L}^*}^{\infty}(\overline{\Omega})$ is called the space of
${L}$-distributions.
We can understand the continuity here either in terms of the topology
\eqref{EQ:L-top-adj} or in terms of sequences, see
Proposition \ref{TH: UniBdd}.
For
$w\in\mathcal D'_{{L}}(\overline{\Omega})$ and $\varphi\in C_{{L}^*}^{\infty}(\overline{\Omega})$,
we shall write
$$
w(\varphi)=\langle w, \varphi\rangle.
$$
For any $\psi\in C_{{L}}^{\infty}(\overline{\Omega})$,
$$
C_{{L}^*}^{\infty}(\overline{\Omega})\ni \varphi\mapsto\int\limits_{\overline{\Omega}}{\psi(x)} \, \varphi(x)\, dx
$$
is an ${L}$-distribution, which gives an embedding $\psi\in
C_{{L}}^{\infty}(\overline{\Omega})\hookrightarrow\mathcal D'_{{
L}}(\overline{\Omega})$.
We note that in the distributional notation formula \eqref{EQ:duality} becomes
\begin{equation}\label{EQ:duality-dist}
\langle{L}\psi, \varphi\rangle=\langle \psi,\overline{{L}^* \overline{\varphi}}\rangle.
\end{equation}
\end{definition}

With the topology on $C_{{L}}^{\infty}(\overline{\Omega})$
defined by \eqref{EQ:L-top},
the space $$\mathcal
D'_{{L^{\ast}}}(\overline{\Omega}):=\mathcal L(C_{{L}}^{\infty}(\overline{\Omega}), \mathbb C)$$
of linear continuous functionals on $C_{{L}}^{\infty}(\overline{\Omega})$
is called the
space of ${L^{\ast}}$-distributions.

\begin{proposition}\label{TH: UniBdd}
A linear functional $w$ on
$C_{{L}^*}^{\infty}(\overline{\Omega})$ belongs to $\mathcal D'_{{
L}}(\overline{\Omega})$ if and only if there exists a constant $c>0$ and a
number $k\in\mathbb N_0$ with the property
\begin{equation}
\label{EQ: UnifBdd-s1} |w(\varphi)|\leq
c \|\varphi\|_{C^{k}_{{L}^*}} \quad \textrm{ for all } \, \varphi\in C_{{
L}^*}^{\infty}(\overline{\Omega}).
\end{equation}
\end{proposition}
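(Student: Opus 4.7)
The plan is to recognize Proposition 2.4 as the standard characterization of continuous linear functionals on a Fr\'echet space whose topology is defined by an increasing countable family of seminorms. The key structural fact to exploit is that $C_{L^*}^{\infty}(\overline{\Omega})$ is Fr\'echet (being the intersection of the Hilbert spaces $\mathrm{Dom}((L^*)^k)$, topologized by the seminorms \eqref{EQ:L-top-adj}), and that by construction
\[
\|\varphi\|_{C^{0}_{L^*}} \leq \|\varphi\|_{C^{1}_{L^*}} \leq \|\varphi\|_{C^{2}_{L^*}} \leq \cdots,
\]
so these seminorms form an increasing, hence directed, generating family.

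The ``if'' direction I would handle first and briefly. Given the bound $|w(\varphi)| \leq c\|\varphi\|_{C^k_{L^*}}$, and a sequence $\varphi_n \to 0$ in $C_{L^*}^{\infty}(\overline{\Omega})$, convergence in the Fr\'echet topology forces $\|\varphi_n\|_{C^k_{L^*}} \to 0$, so $w(\varphi_n) \to 0$; metrizability then upgrades sequential continuity to continuity, placing $w$ in $\mathcal D'_{L}(\overline{\Omega})$.

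For the ``only if'' direction, I would argue by continuity at the origin. Assuming $w \in \mathcal D'_{L}(\overline{\Omega})$, the preimage $w^{-1}(\{z : |z|<1\})$ is an open neighborhood of $0$ in $C_{L^*}^{\infty}(\overline{\Omega})$; since the seminorms in \eqref{EQ:L-top-adj} are increasing, a fundamental system of neighborhoods of $0$ is given by the sets $V_{k,\delta} = \{\varphi : \|\varphi\|_{C^k_{L^*}} < \delta\}$, so there exist $k \in \mathbb N_0$ and $\delta>0$ with $V_{k,\delta} \subset w^{-1}(\{|z|<1\})$. For any $\varphi$ with $\|\varphi\|_{C^k_{L^*}} > 0$, rescaling by $\tfrac{\delta}{2\|\varphi\|_{C^k_{L^*}}}$ lands us inside $V_{k,\delta}$, and linearity yields $|w(\varphi)| \leq \tfrac{2}{\delta}\|\varphi\|_{C^k_{L^*}}$, giving the desired inequality with $c = 2/\delta$.

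The one genuine subtlety, and probably the only point deserving care, is the edge case $\|\varphi\|_{C^k_{L^*}} = 0$: the scaling trick above does not immediately apply. Here I would observe that $n\varphi \in V_{k,\delta}$ for every $n \in \mathbb{N}$, so $n|w(\varphi)| = |w(n\varphi)| < 1$ for all $n$, forcing $w(\varphi) = 0$ and preserving the estimate trivially. (Under assumption (A2), the family of seminorms actually separates points, so this case is vacuous, but the argument works regardless.) Once this is handled, the two directions combine to give the stated equivalence.
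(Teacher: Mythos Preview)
Your argument is correct. This is exactly the standard characterization of continuous linear functionals on a Fr\'echet space whose topology is generated by a countable increasing family of seminorms, and you have carried out both directions cleanly, including the edge case $\|\varphi\|_{C^k_{L^*}}=0$.

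As for comparison with the paper: the paper does not actually supply a proof of this proposition. It appears in the preliminaries section, which recalls material from \cite{RT2016} without reproving it, and Proposition~\ref{TH: UniBdd} is stated there as a known fact. So there is no ``paper's own proof'' to compare against; your write-up is the natural argument one would give if asked to fill in the details, and it is the approach one finds in the original reference as well.
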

\begin{remark}
Suppose that for a linear continuous operator
$D:C_{{L}}^{\infty}(\overline{\Omega})\to C_{{L}}^{\infty}(\overline{\Omega})$
its adjoint $D^*$
preserves the adjoint boundary conditions (domain) of ${L}_\Omega^*$
and is continuous on the space
$C_{{L}^*}^{\infty}(\overline{\Omega})$, i.e.
that the operator
$D^*:C_{{L}^*}^{\infty}(\overline{\Omega})\to C_{{L}^*}^{\infty}(\overline{\Omega})$
is continuous.
Then we can extend $D$ to $\mathcal D'_{{L}}(\overline{\Omega})$ by
$$
\langle Dw,{\varphi}\rangle := \langle w, \overline{D^* \overline{\varphi}}\rangle \quad
(w\in \mathcal D'_{{L}}(\overline{\Omega}),\,  \varphi\in C_{{
L}^*}^{\infty}(\overline{\Omega})).
$$
This extends \eqref{EQ:duality-dist} from $L$ to other operators.
\end{remark}

\begin{definition}[Schwartz class on $\mathcal{I}$]
Let $\mathcal S(\mathcal{I})$ denote the space of rapidly decaying
functions $\varphi:\mathcal{I}\rightarrow\mathbb C$. That is,
$\varphi\in\mathcal S(\mathcal{I})$ if for any $M<\infty$ there
exists a constant $C_{\varphi, M}$ such that
$$
|\varphi(\xi)|\leq C_{\varphi, M}\langle\xi\rangle^{-M}
$$
holds for all $\xi\in\mathcal{I}$.

The topology on $\mathcal
S(\mathcal{I})$ is given by the seminorms $p_{k}$, where
$k\in\mathbb N_{0}$ and $$p_{k}(\varphi):=\sup_{\xi\in\mathcal{I}}\langle\xi\rangle^{k}|\varphi(\xi)|.$$
Continuous linear functionals on $\mathcal S(\mathcal{I})$ are of
the form
$$
\varphi\mapsto\langle u, \varphi\rangle:=\sum_{\xi\in\mathcal{I}}u(\xi)\varphi(\xi),
$$
where functions $u:\mathcal{I} \rightarrow \mathbb C$ grow at most
polynomially at infinity, i.e. there exist constants $M<\infty$
and $C_{u, M}$ such that
$$
|u(\xi)|\leq C_{u, M}\langle\xi\rangle^{M}
$$
holds for all $\xi\in\mathcal{I}$. Such distributions $u:\mathcal{I}
\rightarrow \mathbb C$ form the space of distributions which we denote by
$\mathcal S'(\mathcal{I})$.
\end{definition}
We now define the Fourier analysis on $C_{{L}}^{\infty}(\overline{\Omega})$ (and on $C_{{L}^*}^{\infty}(\overline{\Omega})$) introduced in  \cite{RT2016} using the eigenfunctions of $L$ and $L^*$.
\begin{definition} \label{FT}
We define the ${L}$-Fourier transform
$$
(\mathcal F_{{L}}f)(\xi)=(f\mapsto\widehat{f}):
C_{{L}}^{\infty}(\overline{\Omega})\rightarrow\mathcal S(\mathcal{I})
$$
by
\begin{equation}
\label{FourierTr}
\widehat{f}(\xi):=(\mathcal F_{{L}}f)(\xi)=\int\limits_{\Omega}f(x)\overline{v_{\xi}(x)}dx.
\end{equation}
Analogously, we define the ${L}^{\ast}$-Fourier
transform
$$
(\mathcal F_{{L}^{\ast}}f)(\xi)=(f\mapsto\widehat{f}_{\ast}):
C_{{L}^{\ast}}^{\infty}(\overline{\Omega})\rightarrow\mathcal
S(\mathcal{I})
$$
by
\begin{equation}\label{ConjFourierTr}
\widehat{f}_{\ast}(\xi):=(\mathcal F_{{
L}^{\ast}}f)(\xi)=\int\limits_{\Omega}f(x)\overline{u_{\xi}(x)}dx.
\end{equation}
\end{definition}

The expressions \eqref{FourierTr} and \eqref{ConjFourierTr}
are well-defined.
Moreover, we have

\begin{proposition}\label{LEM: FTinS}
The ${L}$-Fourier transform
$\mathcal F_{{L}}$ is a bijective homeomorphism from $C_{{
L}}^{\infty}(\overline{\Omega})$ to $\mathcal S(\mathcal{I})$.
Its inverse  $$\mathcal F_{{L}}^{-1}: \mathcal S(\mathcal{I})
\rightarrow C_{{L}}^{\infty}(\overline{\Omega})$$ is given by
\begin{equation}
\label{InvFourierTr} (\mathcal F^{-1}_{{
L}}h)(x)=\sum_{\xi\in\mathcal{I}}h(\xi)u_{\xi}(x),\quad h\in\mathcal S(\mathcal{I}),
\end{equation}
so that the Fourier inversion formula becomes
\begin{equation}
\label{InvFourierTr0}
f(x)=\sum_{\xi\in\mathcal{I}}\widehat{f}(\xi)u_{\xi}(x)
\quad \textrm{ for all } f\in C_{{
L}}^{\infty}(\overline{\Omega}).
\end{equation}
Similarly,  $\mathcal F_{{L}^{\ast}}:C_{{L}^{\ast}}^{\infty}(\overline{\Omega})\to \mathcal S(\mathcal{I})$
is a bijective homeomorphism and its inverse
$$\mathcal F_{{L}^{\ast}}^{-1}: \mathcal S(\mathcal{I})\rightarrow
C_{{L}^{\ast}}^{\infty}(\overline{\Omega})$$ is given by
\begin{equation}
\label{ConjInvFourierTr} (\mathcal F^{-1}_{{
L}^{\ast}}h)(x):=\sum_{\xi\in\mathcal{I}}h(\xi)v_{\xi}(x), \quad h\in\mathcal S(\mathcal{I}),
\end{equation}
so that the conjugate Fourier inversion formula becomes
\begin{equation}
\label{ConjInvFourierTr0} f(x)=\sum_{\xi\in\mathcal{I}}\widehat{f}_{\ast}(\xi)v_{\xi}(x)\quad \textrm{ for all } f\in C_{{
L^*}}^{\infty}(\overline{\Omega}).
\end{equation}
\end{proposition}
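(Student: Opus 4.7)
The plan is to establish the proposition in four steps: show that $\mathcal{F}_L$ lands in $\mathcal{S}(\mathcal{I})$ continuously, show that the series defining $\mathcal{F}_L^{-1}$ converges in $C^\infty_L(\overline{\Omega})$, verify that the two compositions are the identity, and finally remark that the same reasoning applies with $L$ and $L^*$ interchanged.

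First, for $f\in C^\infty_L(\overline{\Omega})$ I would iterate the duality \eqref{EQ:duality} together with ${L}^{*}v_\xi=\overline{\lambda_\xi}v_\xi$ to obtain
\[
\widehat{L^k f}(\xi)=(L^k f,v_\xi)_{L^2}=(f,(L^*)^k v_\xi)_{L^2}=\lambda_\xi^k\,\widehat{f}(\xi),\quad k\in\mathbb{N}_0.
\]
Since $\|v_\xi\|_{L^2}=1$ and $\langle\xi\rangle=(1+|\lambda_\xi|^2)^{1/(2\nu)}$, Cauchy--Schwarz gives $|\widehat{f}(\xi)|\leq |\lambda_\xi|^{-k}\|L^kf\|_{L^2}$, so $p_k(\widehat{f})\leq C_k\|f\|_{C^{m_k}_L}$ for a suitable $m_k$, establishing $\mathcal{F}_L\colon C^\infty_L(\overline{\Omega})\to\mathcal{S}(\mathcal{I})$ continuously.

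Second, for $h\in\mathcal{S}(\mathcal{I})$ I would show that $\sum_\xi h(\xi)u_\xi$ converges absolutely in $L^2$. Using $\|u_\xi\|_{L^2}=1$ and the bound $|h(\xi)|\leq C_N\langle\xi\rangle^{-N}$, together with the summability assumption \eqref{Assumption_4}, one gets $\sum_\xi|h(\xi)|\leq C_N\sum_\xi\langle\xi\rangle^{-s_0}<\infty$ upon choosing $N\geq s_0$. The same argument applied to $\sum_\xi h(\xi)\lambda_\xi^k u_\xi$, whose summand decays as $\langle\xi\rangle^{k\nu-N}$, shows absolute $L^2$-convergence for every $k\in\mathbb{N}_0$. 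Since each partial sum lies in $C^\infty_L(\overline{\Omega})$ (each $u_\xi$ satisfies (BC) by (A3)$'$) and Assumption (A2) guarantees that $\mathrm{Dom}(L^k)$ is closed in the Fr\'echet topology \eqref{EQ:L-top}, the limit $f:=\sum_\xi h(\xi)u_\xi$ lies in $C^\infty_L(\overline{\Omega})$ with $L^k f=\sum_\xi h(\xi)\lambda_\xi^k u_\xi$; the same partial-sum estimate yields continuity of $\mathcal{F}_L^{-1}$.

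Third, the two composition identities are verified by termwise calculation. For $h\in\mathcal{S}(\mathcal{I})$, the absolute $L^2$-convergence justifies interchanging summation and the integral \eqref{FourierTr}, and biorthogonality \eqref{BiorthProp} gives $\mathcal{F}_L(\mathcal{F}_L^{-1}h)(\eta)=\sum_\xi h(\xi)(u_\xi,v_\eta)_{L^2}=h(\eta)$. Conversely, for $f\in C^\infty_L(\overline{\Omega})\subset L^2(\overline{\Omega})$, the basis property (A3)$'$ produces a unique expansion $f=\sum_\xi a_\xi u_\xi$ in $L^2$; pairing against $v_\eta$ and using \eqref{BiorthProp} identifies $a_\eta=(f,v_\eta)_{L^2}=\widehat{f}(\eta)$, which is exactly \eqref{InvFourierTr0}. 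The statements for $\mathcal{F}_{L^*}$ follow by the symmetric argument, interchanging the roles of $\{u_\xi\}$ and $\{v_\xi\}$ and of $L$ and $L^*$, using that $\{v_\xi\}$ is also a basis of $L^2(\overline{\Omega})$ as recorded after \eqref{BiorthProp}.

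The main technical obstacle is in the second step: transferring the boundary conditions (BC) from the partial sums to the limit, given that $L$ is not assumed self-adjoint and hence no spectral theorem is available. This is precisely where Assumption (A2) — the closedness of $\mathrm{Dom}(L^k)$ in the Fr\'echet seminorms \eqref{EQ:L-top} — plays the decisive role, because it converts $L^2$-convergence of $L^k$ applied to partial sums into genuine membership of the limit in the boundary-constrained domain.
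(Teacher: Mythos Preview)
The paper does not actually prove this proposition: it appears in Section~2 (Preliminaries), which collects background material from \cite{RT2016} without proofs, so there is no argument in the paper to compare against. Your sketch is the standard route to such a result and is essentially correct.

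Two small technical points are worth tightening. In your first step, writing $|\widehat f(\xi)|\le |\lambda_\xi|^{-k}\|L^kf\|_{L^2}$ is problematic when $\lambda_\xi=0$; the clean way is to combine the identities $\widehat{L^jf}(\xi)=\lambda_\xi^{\,j}\widehat f(\xi)$ for $j=0,\dots,k$ and bound $(1+|\lambda_\xi|^2)^{k/2}|\widehat f(\xi)|$ by $C_k\sum_{j\le k}\|L^jf\|_{L^2}$, which yields the $\langle\xi\rangle$--decay directly without division. In your second step, the appeal to (A2) is slightly off: (A2) as stated in the paper concerns closedness of $\mathrm{Dom}(L)$ \emph{inside} $C^\infty_L(\overline\Omega)$, not closedness of $\mathrm{Dom}(L^k)$ in $L^2$. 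What you actually need is that $L_\Omega$ is a closed operator on $L^2(\overline\Omega)$ (implicit in (A3), since an operator with a complete system of eigenfunctions and discrete spectrum is closable, and one works with its closure); then $f_N\to f$ and $L^kf_N\to g_k$ in $L^2$ for every $k$ force $f\in\bigcap_k\mathrm{Dom}(L_\Omega^k)=C^\infty_L(\overline\Omega)$ with $L^kf=g_k$. With these adjustments your argument goes through.
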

\begin{remark}[$L$-Schwartz Kernel theorem]
 Let us denote
$$ C^{\infty}_{{L}}(\overline{\Omega}\times \overline{\Omega}):=
C^{\infty}_{{L}}(\overline{\Omega})\otimes C^{\infty}_{{L}}(\overline{\Omega}),$$ 
and for the corresponding dual space let us write 
$\mathcal D'_{{L}}(\overline{\Omega}\times \overline{\Omega}):=
\left(C^{\infty}_{{L}}(\overline{\Omega}\times \overline{\Omega})\right)^\prime.$ 
It was proved in \cite{RT2016} that for a continuous linear operator   $$A:C^{\infty}_{{
L}}(\overline{\Omega})\rightarrow\mathcal D'_{{L}}(\overline{\Omega})$$
there exists a unique kernel $K\in \mathcal D'_{{L}}(\overline{\Omega}\times \overline{\Omega})$ such that
$$
\langle Af,g\rangle=\int\limits_{\Omega}\int\limits_{\Omega}K(x,y)f(x)g(y)dxdy,\,\,f,g\in C^{\infty}_{L}(\overline{\Omega}).
$$
Using the notion of the ${L}$-convolution in Section \ref{SEC:conv}, and  assuming WZ-condition,
the ${L}$-distribution $k_{A}\in\mathcal D'_{{L}}(\overline{\Omega}\times \overline{\Omega})$
defined by
\begin{equation} \label{EQ: KernelConv}
k_{A}(x,z):=k_A(x)(z):=\sum\limits_{\eta\in\mathcal{I}}u_{\eta}^{-1}(x)
\int\limits_{\Omega}K_{A}(x,y)u_{\eta}(y)dy \,
u_{\eta}(z),
\end{equation}
satisfies that
$$
Af(x)=(k_{A}(x)\sL f)(x).
$$
\end{remark}
\begin{proposition}\label{PROP:conv-kernel}
For a linear continuous operator $ A:C^{\infty}_{{
L}}(\overline{\Omega})\rightarrow \mathcal D'_{{L}}(\overline{\Omega})$ 
there exists a unique convolution kernel $k_{A}\in\mathcal D'_{{
L}}(\overline{\Omega}\times \overline{\Omega})$ such that
$$Af(x)=(f\sL k_{A}(x))(x),\quad f\in C^{\infty}_{{
L}}(\overline{\Omega}),$$ where we write $ k_{A}(x)(y):=k_{A}(x,y)$ 
in the sense of distributions. Also, for any linear continuous operator $ A:C^{\infty}_{{
L^{\ast}}}(\overline{\Omega})\rightarrow \mathcal D'_{{
L^{\ast}}}(\overline{\Omega})$  there exists a kernel $\widetilde{K}_{A}\in
\mathcal D'_{{L^{\ast}}}(\overline{\Omega}\times \overline{\Omega})$ such that for
all $f\in C^{\infty}_{{L^{\ast}}}(\overline{\Omega})$ we have
$$
Af(x)=\int\limits_{\Omega}\widetilde{K}_{A}(x,y)f(y)dy.
$$
If, in addition, $\{v_{\xi}: \,\,\, \xi\in\mathcal{I}\}$ is a ${\rm
WZ}$-system, then for a linear continuous operator
$A:C^{\infty}_{{L^{\ast}}}(\overline{\Omega})\rightarrow
\mathcal D'_{{L}^*}(\overline{\Omega})$ there exists the
convolution kernel $\widetilde{k}_{A}\in\mathcal D'_{{
L^{\ast}}}(\overline{\Omega}\times \overline{\Omega})$, such that
\begin{equation}\label{l*con}
    Af(x)=(f\sLs \widetilde{k}_{A}(x))(x),\quad f\in C^{\infty}_{{
L}^*}(\overline{\Omega}),
\end{equation} where we write $$\widetilde
k_{A}(x)(y):=\widetilde k_{A}(x,y)$$ in the sense of distributions.
\end{proposition}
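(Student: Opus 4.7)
The plan is to reduce the existence statements to the $L$-Schwartz kernel theorem recalled in the remark just above, combined with Fourier inversion (Proposition \ref{LEM: FTinS}) and the defining multiplicative property of the $L$-convolution $\sL$ (and its $L^*$-analogue $\sLs$) developed in Section \ref{SEC:conv}, namely $\widehat{f\sL g}(\eta)=\widehat f(\eta)\,\widehat g(\eta)$ (respectively $\widehat{f\sLs g}_{*}(\eta)=\widehat f_{*}(\eta)\,\widehat g_{*}(\eta)$). This is the algebraic identity that turns a kernel representation into a convolution representation.

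For the first assertion, I would apply the $L$-Schwartz kernel theorem to the continuous map $A:C^\infty_{L}(\overline{\Omega})\to\mathcal{D}'_L(\overline{\Omega})$, producing a unique $K_A\in\mathcal{D}'_L(\overline{\Omega}\times\overline{\Omega})$ representing $A$, and then take the candidate kernel $k_A(x,z)$ as in \eqref{EQ: KernelConv}. Reading this formula as an $L$-Fourier expansion in the variable $z$, the $\eta$-th $L$-Fourier coefficient of $k_A(x)$ is
\[
\widehat{k_A(x)}(\eta)=u_{\eta}^{-1}(x)\int\limits_{\Omega}K_A(x,y)u_\eta(y)\,dy,
\]
which is meaningful thanks to the WZ assumption on $\{u_\eta\}$. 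Using the $L$-convolution identity and the Fourier inversion formula \eqref{InvFourierTr0} for $f$, a direct computation gives
\[
(f\sL k_A(x))(x)=\sum_{\eta\in\mathcal{I}}\widehat f(\eta)\int\limits_{\Omega}K_A(x,y)u_\eta(y)\,dy=\int\limits_{\Omega}K_A(x,y)f(y)\,dy=Af(x),
\]
after exchanging the summation with the distributional pairing, which is justified by the rapid decay $\widehat f\in\mathcal{S}(\mathcal{I})$ and the seminorm continuity of $K_A(x,\cdot)$. Uniqueness follows from the Fourier inversion formula: two convolution kernels inducing the same operator have the same $L$-Fourier coefficients.

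For the second assertion I would repeat the argument with the roles of $L$ and $L^*$ interchanged. The $L^*$-version of the Schwartz kernel theorem (entirely analogous, since by Definition \ref{DistrSp} the space $\mathcal{D}'_{L^*}(\overline{\Omega})$ is the dual of $C^\infty_{L}(\overline{\Omega})$ and the system $\{v_\xi\}$ is a basis of $L^2(\overline{\Omega})$) yields the integral representation with kernel $\widetilde K_A\in\mathcal{D}'_{L^*}(\overline{\Omega}\times\overline{\Omega})$. Assuming in addition the WZ-condition for $\{v_\xi\}$, one forms
\[
\widetilde k_A(x,z):=\sum_{\eta\in\mathcal{I}}v_{\eta}^{-1}(x)\int\limits_{\Omega}\widetilde K_A(x,y)v_\eta(y)\,dy\;v_\eta(z),
\]
and the defining property of $\sLs$ together with the $L^*$-inversion formula \eqref{ConjInvFourierTr0} yields \eqref{l*con} exactly as in the $L$-case.

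The main obstacle is the analytic one of showing that the formal series for $k_A(x,z)$ (and $\widetilde k_A(x,z)$) actually defines an element of $\mathcal{D}'_L(\overline{\Omega}\times\overline{\Omega})$ (respectively $\mathcal{D}'_{L^*}(\overline{\Omega}\times\overline{\Omega})$); equivalently, that the coefficient sequence $\eta\mapsto u_\eta^{-1}(x)\int K_A(x,y)u_\eta(y)\,dy$ grows at most polynomially in $\langle\eta\rangle$. The WZ bound $\inf_{x\in\overline{\Omega}}|u_\eta(x)|\geq C\langle\eta\rangle^{-N}$ controls the prefactor $u_\eta^{-1}(x)$, while Proposition \ref{TH: UniBdd} applied to $K_A(x,\cdot)\in\mathcal{D}'_L(\overline{\Omega})$ bounds the integral by $\|u_\eta\|_{C^k_L}=\max_{j\le k}|\lambda_\eta|^{j}$, which is polynomial in $\langle\eta\rangle$. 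Combining these estimates, $k_A(\cdot,\cdot)$ defines a continuous functional on $C^\infty_{L^*}(\overline{\Omega})\otimes C^\infty_{L^*}(\overline{\Omega})$, so $k_A\in\mathcal{D}'_L(\overline{\Omega}\times\overline{\Omega})$, and the same argument using the WZ-property of $\{v_\xi\}$ covers $\widetilde k_A$.
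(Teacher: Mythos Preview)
Your proposal is correct and follows precisely the construction the paper outlines in the remark immediately preceding the proposition (the $L$-Schwartz kernel theorem together with formula \eqref{EQ: KernelConv}); the paper itself does not supply a proof here but defers to \cite{RT2016}, and your argument is exactly the one that reference carries out. One small technical slip: since $\mathcal D'_L(\overline\Omega)=(C^\infty_{L^*}(\overline\Omega))'$, Proposition \ref{TH: UniBdd} bounds pairings by $C^k_{L^*}$-seminorms rather than $C^k_L$-seminorms; the cleaner way to get the polynomial growth of $\eta\mapsto u_\eta^{-1}(x)\int K_A(x,y)u_\eta(y)\,dy$ is to recognise the integral as $(Au_\eta)(x)$ and use the continuity of $A$ on $C^\infty_L$ directly, which yields the same conclusion.
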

In \eqref{l*con} we have used the ${L}^*$-convolution $\sLs,$ we refer to \eqref{EQ: CONV2} of Section \ref{SEC:conv} for the definition.

\subsection{${L}$-Quantization and and full symbols}
\label{SEC:quantization}

In this subsection we describe the ${L}$-quantization induced by the boundary value problem
${L}_\Omega$. 
\begin{definition}[${L}$-Symbols of operators on $\Omega$] \label{$L$--Symbols}
The ${L}$-symbol of a linear continuous
operator $$A:C^{\infty}_{{L}}(\overline{\Omega})\rightarrow
\mathcal D'_{{L}}(\overline{\Omega})$$ at $x\in\overline{\Omega}$ and
$\xi\in\mathcal{I}$ is defined by
$$\sigma_{A}(x, \xi):=\widehat{k_{A}(x)}(\xi)=\mathcal F_{{L}}(k_{A}(x))(\xi).$$
Hence, we can also write
$$\sigma_{A}(x, \xi)=\int\limits_{\Omega}k_{A}(x,y)\overline{v_{\xi}(y)}dy=
\langle k_{A}(x),\overline{v_{\xi}}\rangle.$$
\end{definition}

By the ${L}$-Fourier inversion formula the convolution kernel
can be regained from the symbol:
\begin{equation}
\label{Kernel} k_{A}(x, y)=\sum_{\xi\in\mathcal{I}}\sigma_{A}(x,
\xi)u_{\xi}(y),
\end{equation}
all in the sense of ${L}$-distributions. We now show that an
operator $A$ can be represented by its symbol:

\begin{theorem}[${L}$--quantization] \label{QuanOper}
Let $$A:C^{\infty}_{{L}}(\overline{\Omega})\rightarrow
C^{\infty}_{{L}}(\overline{\Omega})$$ be a continuous linear
operator with {L}-symbol $\sigma_{A}$. Then
\begin{equation}\label{Quantization}
Af(x)=\sum_{\xi\in\mathcal{I}}
u_{\xi}(x)\sigma_{A}(x, \xi) \widehat{f}(\xi)
\end{equation}
for every $f\in C^{\infty}_{{L}}(\overline{\Omega})$ and
$x\in\overline{\Omega}$.
The {L}-symbol $\sigma_{A}$ satisfies
\begin{equation}\label{FormSymb}
\sigma_{A}(x,\xi)=u_{\xi}(x)^{-1}(Au_{\xi})(x)
\end{equation}
for all $x\in\overline{\Omega}$ and $\xi\in\mathcal{I}$.
\end{theorem}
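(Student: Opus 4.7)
The plan is to derive the symbol formula first, using the convolution representation of $A$, and then to obtain the quantization formula by invoking the $L$-Fourier inversion formula together with the continuity of $A$.

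\textbf{Step 1 (symbol formula).} By Proposition \ref{PROP:conv-kernel}, $Au_\eta(x) = (u_\eta \sL k_{A}(x))(x)$ for each $\eta\in\mathcal{I}$. The $L$-convolution $\sL$ is designed precisely so that the $L$-Fourier transform intertwines it with pointwise multiplication: $\widehat{f \sL g}(\xi) = \widehat{f}(\xi)\widehat{g}(\xi)$. Combining this with the biorthogonality relation \eqref{BiorthProp}, which yields $\widehat{u_\eta}(\xi) = (u_\eta, v_\xi)_{L^2} = \delta_{\eta\xi}$, and then applying the inversion formula \eqref{InvFourierTr} to $u_\eta \sL k_A(x)$ viewed as a function of the second variable, one obtains
\begin{equation*}
(u_\eta \sL k_A(x))(z) = \sum_{\xi\in\mathcal{I}} \delta_{\eta\xi}\, \widehat{k_A(x)}(\xi)\, u_\xi(z) = \sigma_A(x,\eta)\, u_\eta(z),
\end{equation*}
where the last equality uses the definition $\sigma_A(x,\eta) = \widehat{k_A(x)}(\eta)$. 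Setting $z = x$ gives $Au_\eta(x) = \sigma_A(x,\eta)\, u_\eta(x)$; dividing by $u_\eta(x)$ (meaningful pointwise under the WZ hypothesis, or at least on the set where $u_\eta$ does not vanish) produces the symbol formula.

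\textbf{Step 2 (quantization formula).} By the $L$-Fourier inversion formula \eqref{InvFourierTr0}, every $f \in C^\infty_L(\overline{\Omega})$ admits the expansion $f = \sum_{\xi\in\mathcal{I}} \widehat{f}(\xi)\, u_\xi$, with convergence in the Fr\'echet topology of $C^\infty_L(\overline{\Omega})$ because $\mathcal{F}_L$ is a homeomorphism by Proposition \ref{LEM: FTinS}. Since $A: C^\infty_L(\overline{\Omega}) \to C^\infty_L(\overline{\Omega})$ is assumed continuous, I may pass $A$ inside the sum, obtaining
\begin{equation*}
Af(x) = \sum_{\xi\in\mathcal{I}} \widehat{f}(\xi)\, (Au_\xi)(x) = \sum_{\xi\in\mathcal{I}} u_\xi(x)\, \sigma_A(x,\xi)\, \widehat{f}(\xi),
\end{equation*}
where the last equality is exactly Step 1.

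\textbf{Main obstacle.} The delicate point is ensuring that the Fourier expansion of $f$ genuinely converges in the Fr\'echet topology defined by the seminorms \eqref{EQ:L-top}, so that the continuity of $A$ can legitimately be applied term by term. This is where one must combine the rapid decay of $\widehat{f}\in\mathcal{S}(\mathcal{I})$ with the polynomial growth of $\|L^k u_\xi\|_{L^2}=|\lambda_\xi|^k$, forcing the partial sums to be Cauchy in every seminorm $\|\cdot\|_{C^k_L}$. A secondary but conceptually important point is the interpretation of $u_\eta(x)^{-1}$ in the symbol formula, which is the reason this representation is cleanest under the WZ hypothesis; without it, the identity still holds in the form $(Au_\eta)(x)=\sigma_A(x,\eta)u_\eta(x)$.
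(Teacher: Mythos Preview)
The paper does not prove this theorem: it is stated in Section~\ref{Sect2} as a preliminary result taken from \cite{RT2016}, with the surrounding text explicitly referring the reader to that reference for details. There is therefore no ``paper's own proof'' to compare against.

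That said, your argument is correct and follows the natural route one finds in \cite{RT2016}. Step~1 is the standard computation: apply the convolution representation from Proposition~\ref{PROP:conv-kernel}, use $\widehat{u_\eta}(\xi)=\delta_{\eta\xi}$ together with $\widehat{f\sL g}=\widehat{f}\,\widehat{g}$ (Proposition~\ref{ConvProp}), and read off $Au_\eta(x)=\sigma_A(x,\eta)u_\eta(x)$. Step~2 is the expected use of Proposition~\ref{LEM: FTinS} plus continuity of $A$. Your identification of the convergence issue and of the role of the WZ hypothesis is accurate; note in particular that the very definition of the convolution kernel $k_A$ in \eqref{EQ: KernelConv} already invokes $u_\eta^{-1}(x)$, so the WZ assumption is implicitly in force throughout this quantization picture, not merely at the division step.
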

In a similar fashion, the $L^*$-symbols of operators on $\Omega$ and $L^*$-quantization can be introduced. We refer to \cite{RT2016} for more detail.

\subsection{Difference operators and symbolic calculus}
\label{SEC:differences}
In this subsection we discuss difference operators that will be useful to define symbol
classes for the symbolic calculus of operators.

Let $q_{j}\in C^{\infty}({\Omega}\times{\Omega})$, $j=1,\ldots,l$, be a given family
of smooth functions.
We will call the collection of $q_j$'s { {$L$}-strongly admissible} if the following properties hold:
\begin{itemize}
\item For every $x\in\overline{\Omega}$, the multiplication by $q_{j}(x,\cdot)$
is a continuous linear mapping on
 $C^{\infty}_{{L}}(\overline{\Omega})$, for all $j=1,\ldots,l$;
\item $q_{j}(x,x)=0$ for all $j=1,\ldots,l$;
\item $
{\rm rank}(\nabla_{y}q_{1}(x,y), \ldots, \nabla_{y}q_{l}(x,y))|_{y=x}=n;
$
\item  the diagonal in $\overline{\Omega}\times \overline{\Omega}$ is the only set when all of
$q_j$'s vanish:
$$
\bigcap_{j=1}^l \left\{(x,y)\in\overline{\Omega}\times \overline{\Omega}: \, q_j(x,y)=0\right\}=\{(x,x):\, x\in\overline{\Omega}\}.
$$
\end{itemize}
We will use the multi-index notation
$$
q^{\alpha}(x,y):=q^{\alpha_1}_{1}(x,y)\cdots q^{\alpha_l}_{l}(x,y).
$$
Analogously, the notion of an ${L}^{*}$-strongly admissible collection suitable for the
conjugate problem can be introduced. 

Now,we record the Taylor expansion formula with respect to a family of $q_j$'s,
which follows from expansions of functions $g$ and
$q^{\alpha}(e,\cdot)$ by the common Taylor series:

\begin{proposition}\label{TaylorExp}
Any smooth function $g\in C^{\infty}({\Omega})$ can be
approximated by Taylor polynomial type expansions, i.e. for $e\in\Omega$, we have
$$g(x)=\sum_{|\alpha|<
N}\frac{1}{\alpha!}D^{(\alpha)}_{x}g(x)|_{x=e}\, q^{\alpha}(e,x)+\sum_{|\alpha|=
N}\frac{1}{\alpha!}q^{\alpha}(e,x)g_{N}(x)
$$
\begin{equation}
\sim\sum_{\alpha\geq
0}\frac{1}{\alpha!}D^{(\alpha)}_{x}g(x)|_{x=e}\, q^{\alpha}(e,x)
\label{TaylorExpFormula}
\end{equation}
in a neighborhood of $e\in\Omega$, where $g_{N}\in
C^{\infty}({\Omega})$ and
$D^{(\alpha)}_{x}g(x)|_{x=e}$ can be found from the recurrent formulae:
$D^{(0,\cdots,0)}_{x}:=I$ and for $\alpha\in\mathbb N_0^l$,
$$
\mathsf
\partial^{\beta}_{x}g(x)|_{x=e}=\sum_{|\alpha|\leq|\beta|}\frac{1}{\alpha!}
\left[\mathsf
\partial^{\beta}_{x}q^{\alpha}(e,x)\right]\Big|_{x=e}D^{(\alpha)}_{x}g(x)|_{x=e},
$$
where $\beta=(\beta_1, \ldots, \beta_n)$ and
$
\partial^{\beta}_{x}=\frac{\partial^{\beta_{1}}}{\partial x_{1}^{\beta_{1}}}\cdots
\frac{\partial^{\beta_{n}}}{\partial x_{n}^{\beta_{n}}}.
$
\end{proposition}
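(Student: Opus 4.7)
The plan is to reduce the proposition to the classical Taylor formula on $\mathbb{R}^{n}$ combined with an iterated Hadamard-type decomposition, justified by the rank condition on the admissible family $\{q_{j}\}$. Working in a local coordinate patch around $e\in\Omega$, I would begin with the standard Taylor expansion
\[
g(x)=\sum_{|\beta|<N}\frac{1}{\beta!}(\partial^{\beta}g)(e)(x-e)^{\beta}+\sum_{|\beta|=N}\frac{1}{\beta!}(x-e)^{\beta}r_{\beta}(x),
\]
with smooth remainders $r_{\beta}$, and the parallel expansion of $q^{\alpha}(e,\cdot)$ at $x=e$. Because $q_{j}(e,e)=0$ for each $j$, this latter expansion starts at order $|\alpha|$, i.e.\ $[\partial^{\beta}_{x}q^{\alpha}(e,x)]_{x=e}=0$ whenever $|\beta|<|\alpha|$.

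To identify the coefficients $D^{(\alpha)}_{x}g(x)|_{x=e}$, I would plug the Taylor expansion of $q^{\alpha}(e,x)$ into the formal series $\sum_{\alpha}\frac{c_{\alpha}}{\alpha!}q^{\alpha}(e,x)$, exchange the order of summation, and match the coefficient of $(x-e)^{\beta}$ with that in the classical Taylor expansion of $g$. The vanishing $[\partial_{x}^{\beta}q^{\alpha}]_{x=e}=0$ for $|\alpha|>|\beta|$ truncates the matching to the finite system
\[
(\partial^{\beta}g)(e)=\sum_{|\alpha|\le|\beta|}\frac{1}{\alpha!}\bigl[\partial^{\beta}_{x}q^{\alpha}(e,x)\bigr]_{x=e}\,c_{\alpha},
\]
which is precisely the recurrence in the statement. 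Setting $D^{(0,\dots,0)}_{x}:=I$ as the base case, the recurrence is then solved inductively on $|\alpha|$: the rank condition on $(\nabla_{y}q_{j}(x,y))_{y=x}$ ensures that the degree-$k$ products $q^{\alpha}(e,\cdot)$, $|\alpha|=k$, span all homogeneous polynomials of degree $k$ in $(x-e)$ modulo higher-order terms, so each new level of the recurrence is solvable.

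Finally, to produce the remainder, I would set
\[
R_{N}(x):=g(x)-\sum_{|\alpha|<N}\frac{1}{\alpha!}D^{(\alpha)}_{x}g(x)|_{x=e}\,q^{\alpha}(e,x),
\]
which by construction has Taylor polynomial vanishing up to order $N-1$ at $x=e$. Because $q_{1}(e,\cdot),\dots,q_{l}(e,\cdot)$ locally generate the maximal ideal at $e$ (by the rank condition together with property (iv) that the joint zero set is the diagonal), an iterated Hadamard lemma yields a representation $R_{N}(x)=\sum_{|\alpha|=N}\frac{1}{\alpha!}q^{\alpha}(e,x)g_{N,\alpha}(x)$ with smooth $g_{N,\alpha}$, matching the form stated in the proposition (with the understanding that the single $g_{N}$ there stands for the collection $\{g_{N,\alpha}\}$). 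I expect the main obstacle to be precisely this last step: producing \emph{smooth} factors $g_{N,\alpha}$, which is delicate when $l>n$ because the degree-$N$ products $q^{\alpha}(e,\cdot)$ become linearly dependent and the Hadamard division is no longer canonical. The coefficient-matching step itself is a routine Taylor bookkeeping.
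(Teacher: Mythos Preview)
Your approach is correct and coincides with the paper's own justification: the paper does not give a detailed proof but simply records the proposition as following ``from expansions of functions $g$ and $q^{\alpha}(e,\cdot)$ by the common Taylor series,'' which is precisely the coefficient-matching argument you carry out. Your treatment of the remainder via an iterated Hadamard decomposition goes beyond what the paper provides, but the core idea is the same.
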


Analogously, any function $g\in C^{\infty}({\Omega})$
can be approximated by Taylor polynomial type expansions corresponding to
the adjoint problem using the $L^*$-strongly admissible collection.

It can be seen that operators $D^{(\alpha)}$ 
is differential operator of order
$|\alpha|$.

\begin{definition}\label{DEF: DifferenceOper}\label{DEF: DifferenceOper_2}
For WZ-systems, we define difference operator $\Delta_{q,(x)}^{\alpha}$ acting
on Fourier coefficients by any of the following equal expressions
\begin{align*}
\Delta_{q,(x)}^{\alpha}\widehat{f}(\xi)& = u_{\xi}^{-1}(x)
\int\limits_{\Omega}\Big[\int\limits_{\Omega}q^{\alpha}(x,y)F(x,y,z)f(z)dz\Big]u_{\xi}(y)dy
\\
& = u_{\xi}^{-1}(x)
\sum_{\eta\in\mathcal{I}}\mathcal
F_{L}\Big(q^{\alpha}(x,\cdot)u_{\xi}(\cdot)\Big)(\eta)\widehat{f}(\eta)u_{\eta}(x)
\\
& = u_{\xi}^{-1}(x) \left([q^{\alpha}(x,\cdot)u_{\xi}(\cdot)]\sL
f\right)(x).
\end{align*}
Similarly, we can  define the action of the difference operator
$\widetilde{\Delta}_{q,(x)}^{\alpha}$ acting on adjoint Fourier
coefficients. 
\end{definition}
For simplicity, if there is no confusion, for a fixed collection
of $q_j$'s, instead of $\Delta_{q,(x)}$ and
$\widetilde{\Delta}_{\widetilde{q},(x)}$ we will often simply write
$\Delta_{(x)}$ and $\widetilde{\Delta}_{(x)}$.

\medskip
Using such difference operators and derivatives $D^{(\alpha)}$ from
Proposition \ref{TaylorExp}
we can now define classes of symbols.

\begin{definition}[Symbol class $S^m_{\rho,\delta}(\overline{\Omega}\times\mathcal{I})$]\label{DEF: SymClass}
Let $m\in\mathbb R$ and $0\leq\delta,\rho\leq 1$. The ${L}$-symbol class
$S^m_{\rho,\delta}(\overline{\Omega}\times\mathcal{I})$ consists of
those functions $a(x,\xi)$ which are smooth in $x$ for all
$\xi\in\mathcal{I}$, and which satisfy
\begin{equation}\label{EQ:symbol-class}
  \left|\Delta_{(x)}^\alpha D^{(\beta)}_{x} a(x,\xi) \right|
        \leq C_{a\alpha\beta m}
                \ \langle\xi\rangle^{m-\rho|\alpha|+\delta|\beta|}
\end{equation}
for all $x\in\overline{\Omega}$, for all $\alpha,\beta\geq 0$, and for all $\xi\in\mathcal{I}$.
Here the operators $D^{(\beta)}_{x}$ are defined in Proposition
\ref{TaylorExp}. We will often denote them simply by $D^{(\beta)}$.
In \eqref{EQ:symbol-class}, we assume that the inequality is satisfied for $x\in\overline{\Omega}$ and
it extends to the closure $\overline\Omega$.
Furthermore, we define
$$
S^{\infty}_{\rho,\delta}(\overline{\Omega}\times\mathcal{I}):=\bigcup\limits_{m\in\mathbb
R}S^{m}_{\rho,\delta}(\overline{\Omega}\times\mathcal{I})
$$
and
$$
S^{-\infty}(\overline{\Omega}\times\mathcal{I}):=\bigcap\limits_{m\in\mathbb
R}S^{m}(\overline{\Omega}\times\mathcal{I}).
$$
When we have two $L$-strongly admissible collections, expressing one in terms of
the other similarly to Proposition \ref{TaylorExp} and arguing similarly to
\cite{Ruzhansky-Turunen-Wirth:JFAA}, we can convince ourselves that for $\rho>\delta$ the
definition of the symbol class does not depend on the choice of an
 $L$-strongly admissible collection.
 
A symbol $\sigma_A\in S^m_{\rho,\delta}(\overline{\Omega}\times\ind)$ is said to be 
$L$-{\it elliptic} if there exist constants $C_0>0$ and
$N_0\in\mathbb N$ such that
\begin{equation}\label{elliptic}
  |\sigma_A(x,\xi)|
  \geq C_0 \langle\xi\rangle^m
\end{equation}
for all $(x,\xi)\in\overline{\Omega}\times\ind$ for which
$\langle \xi \rangle \geq N_0.$

\end{definition}

If $a\in S^m_{\rho,\delta}(\overline{\Omega}\times\mathcal{I})$, it is convenient to
denote by $a(X,D)= {\rm Op}_L(a)$  the corresponding ${
L}$-pseudo-differential operator defined by
\begin{equation}\label{EQ: L-tor-pseudo-def}
  {\rm Op}_L(a)f(x)=a(X,D)f(x):=\sum_{\xi\in\mathcal{I}} u_{\xi}(x)\ a(x,\xi)\widehat{f}(\xi).
\end{equation}
The set of operators ${\rm Op}_L(a)$ of the form
(\ref{EQ: L-tor-pseudo-def}) with $a\in
S^m_{\rho,\delta}(\overline{\Omega}\times\mathcal{I})$ will be denoted by
${\rm Op}_L(S^m_{\rho,\delta} (\overline{\Omega}\times\mathcal{I}))$, or by
$\Psi^m_{\rho,\delta} (\overline{\Omega}\times\mathcal{I})$. If an
operator $A$ satisfies $A\in{\rm
Op_L}(S^m_{\rho,\delta}(\overline{\Omega}\times\mathcal{I}))$, we denote
its ${L}$-symbol by $\sigma_{A}=\sigma_{A}(x, \xi), \,\,
x\in\overline{\Omega}, \, \xi\in\mathcal{I}$. Naturally,
$\sigma_{a(X,D)}(x,\xi)=a(x,\xi)$.

Analogously, we define the ${L^{\ast}}$-symbol class
$\widetilde{S}^m_{\rho,\delta}(\overline{\Omega}\times\mathcal{I})$  and 
the corresponding ${
L^{\ast}}$-pseudo-differential operator ${\rm Op}_{L^*}(a)$  can be defined.

\begin{remark}\label{REM: Topology of SymClass}
{\rm (Topology on $S^{m}_{\rho,
\delta}(\overline{\Omega}\times\mathcal{I})$ ($\widetilde{S}^{m}_{\rho,
\delta}(\overline{\Omega}\times\mathcal{I})$)).} The set $S^{m}_{\rho,
\delta}(\overline{\Omega}\times\mathcal{I})$ ($\widetilde{S}^{m}_{\rho,
\delta}(\overline{\Omega}\times\mathcal{I})$) of symbols has a natural
topology. Let us consider the functions $p_{\alpha\beta}^{l}:
S^{m}_{\rho,
\delta}(\overline{\Omega}\times\mathcal{I})\rightarrow\mathbb R$
($\widetilde{p}_{\alpha\beta}^{l}: \widetilde{S}^{m}_{\rho,
\delta}(\overline{\Omega}\times\mathcal{I})\rightarrow\mathbb R$) defined
by
$$
p_{\alpha\beta}^{l}(\sigma):={\rm
sup}\left[\frac{\left|\Delta_{(x)}^{\alpha}D^{(\beta)}\sigma(x,
\xi)\right|}{\langle\xi\rangle^{l-\rho|\alpha|+\delta|\beta|}}:\,\,
(x, \xi)\in\overline{\Omega}\times\mathcal{I}\right]
$$
$$
\left(\widetilde{p}_{\alpha\beta}^{l}(\sigma):={\rm
sup}\left[\frac{\left|\widetilde{\Delta}_{(x)}^{\alpha}\widetilde{D}^{(\beta)}\sigma(x,
\xi)\right|}{\langle\xi\rangle^{l-\rho|\alpha|+\delta|\beta|}}:\,\,
(x, \xi)\in\overline{\Omega}\times\mathcal{I}\right]\right).
$$
Now $\{p_{\alpha\beta}^{l}\}$
($\{\widetilde{p}_{\alpha\beta}^{l}\}$) is a countable family of seminorms,
and they define a
Fr\'echet topology on $S^{m}_{\rho,
\delta}(\overline{\Omega}\times\mathcal{I})$
($\widetilde{S}^{m}_{\rho, \delta}(\overline{\Omega}\times\mathcal{I})$). Due to the bijective correspondence of ${\rm
Op}_L(S^{m}_{\rho, \delta}(\overline{\Omega}\times\mathcal{I}))$ and
$S^{m}_{\rho, \delta}(\overline{\Omega}\times\mathcal{I})$ (${\rm
Op}_{L^*}(\widetilde{S}^{m}_{\rho,
\delta}(\overline{\Omega}\times\mathcal{I}))$ and
$\widetilde{S}^{m}_{\rho, \delta}(\overline{\Omega}\times\mathcal
I)$), this directly topologises also the set of operators. These spaces
are not normable, and the topologies have but a marginal role.
\end{remark}

The next theorem is a prelude to asymptotic expansions, which are
the main tool in the symbolic analysis of ${
L}$-pseudo-differential operators.

\begin{theorem}[Asymptotic sums of symbols] Let $(m_{j})_{j=0}^{\infty}\subset\mathbb
R$ be a sequence such that $m_{j}>m_{j+1}$, and
$m_{j}\rightarrow-\infty$ as $j\rightarrow\infty$, and
$\sigma_{j}\in
S^{m_{j}}_{\rho,\delta}(\overline{\Omega}\times\mathcal{I})$ for all
$j\in\mathcal{I}$. Then there exists an ${L}$-symbol $\sigma\in
S^{m_{0}}_{\rho,\delta}(\overline{\Omega}\times\mathcal{I})$ such that
for all $N\in\mathcal{I}$,
$$
\sigma\stackrel{m_{N},\rho,\delta}{\sim}\sum_{j=0}^{N-1}\sigma_{j}.
$$
\end{theorem}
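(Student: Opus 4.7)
The plan is to adapt the classical Borel--H\"ormander construction of asymptotic sums to the non-harmonic setting via a cutoff in $\xi$. Fix $\chi\in C^{\infty}(\mathbb{R})$ with $\chi(t)=0$ for $t\leq 1$ and $\chi(t)=1$ for $t\geq 2$. I would look for $\sigma$ in the form
\begin{equation*}
\sigma(x,\xi):=\sum_{j=0}^{\infty}\chi(\varepsilon_{j}\langle\xi\rangle)\,\sigma_{j}(x,\xi),
\end{equation*}
where $(\varepsilon_{j})_{j\geq 0}$ is a positive sequence strictly decreasing to $0$, to be chosen below. Because $\chi(\varepsilon_{j}\langle\xi\rangle)=0$ whenever $\varepsilon_{j}\langle\xi\rangle\leq 1$, and $\varepsilon_{j}\to 0$, for every fixed $(x,\xi)$ only finitely many terms of the series are nonzero, so $\sigma$ is pointwise well-defined.

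The first real step is an auxiliary lemma: for every $j$ and every multi-indices $\alpha,\beta$,
\begin{equation*}
\bigl|\Delta_{(x)}^{\alpha}D_{x}^{(\beta)}\bigl[\chi(\varepsilon_{j}\langle\xi\rangle)\sigma_{j}(x,\xi)\bigr]\bigr|\leq C_{j,\alpha,\beta}\,\varepsilon_{j}^{\kappa(j,\alpha,\beta)}\,\langle\xi\rangle^{m_{j}-\rho|\alpha|+\delta|\beta|}
\end{equation*}
for some $\kappa(j,\alpha,\beta)>0$ when $j$ is large compared to $|\alpha|,|\beta|$. To obtain this I would establish a Leibniz-type formula for the difference operators $\Delta_{(x)}^{\alpha}$ (which, via Definition \ref{DEF: DifferenceOper}, reduce to $L$-convolutions with $q^{\alpha}(x,\cdot)u_{\xi}(\cdot)$) expanding $\Delta_{(x)}^{\alpha}[\chi(\varepsilon_{j}\langle\cdot\rangle)\sigma_{j}(x,\cdot)]$ into a finite sum of $\Delta$-acting on each factor separately. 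The $\sigma_{j}$-factor is controlled by its symbol estimates. For the cutoff factor, one has to verify that functions of $\langle\xi\rangle$ of classical symbol type on $\mathbb{R}_{0}^{+}$ yield admissible $L$-symbols; in particular that each $\Delta$-step on $\chi(\varepsilon_{j}\langle\xi\rangle)$ gains a factor $\varepsilon_{j}\langle\xi\rangle^{-\rho}$ (or better) on the support, giving the $\varepsilon_{j}^{\kappa}$-smallness we need. This is the non-harmonic analogue of the chain rule $\partial_{\xi}^{\alpha}\chi(\varepsilon\langle\xi\rangle)=O(\varepsilon^{|\alpha|}\langle\xi\rangle^{-|\alpha|})$, and is the main technical obstacle.

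Granted that lemma, I invoke the classical diagonal trick: for each $j\geq 1$, choose $\varepsilon_{j}>0$ small enough (and $\varepsilon_{j}<\varepsilon_{j-1}$) so that
\begin{equation*}
p_{\alpha\beta}^{m_{j}}\bigl(\chi(\varepsilon_{j}\langle\cdot\rangle)\sigma_{j}\bigr)\leq 2^{-j}\quad\text{for all }|\alpha|,|\beta|\leq j,
\end{equation*}
using the seminorms of Remark \ref{REM: Topology of SymClass}. Then for any fixed $N$ and fixed $|\alpha|,|\beta|$, the tail $\sum_{j\geq N}\chi(\varepsilon_{j}\langle\xi\rangle)\sigma_{j}$ converges absolutely in $S^{m_{N}}_{\rho,\delta}(\overline{\Omega}\times\mathcal{I})$, because for $j\geq\max(N,|\alpha|,|\beta|)$ each term is bounded by $2^{-j}\langle\xi\rangle^{m_{j}-\rho|\alpha|+\delta|\beta|}\leq 2^{-j}\langle\xi\rangle^{m_{N}-\rho|\alpha|+\delta|\beta|}$, while the finitely many initial terms lie in $S^{m_{N}}_{\rho,\delta}$ by monotonicity of the classes.

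Finally I decompose, for each $N$,
\begin{equation*}
\sigma-\sum_{j=0}^{N-1}\sigma_{j}=-\sum_{j=0}^{N-1}\bigl(1-\chi(\varepsilon_{j}\langle\xi\rangle)\bigr)\sigma_{j}+\sum_{j=N}^{\infty}\chi(\varepsilon_{j}\langle\xi\rangle)\sigma_{j}.
\end{equation*}
The first sum is supported in $\{\langle\xi\rangle\leq 2/\varepsilon_{N-1}\}$, i.e. in a finite subset of $\mathcal{I}$ in view of (A4), hence belongs to $S^{-\infty}(\overline{\Omega}\times\mathcal{I})\subset S^{m_{N}}_{\rho,\delta}(\overline{\Omega}\times\mathcal{I})$; the second sum is in $S^{m_{N}}_{\rho,\delta}$ by the previous step. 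Taking $N=1$ in particular yields $\sigma\in S^{m_{0}}_{\rho,\delta}(\overline{\Omega}\times\mathcal{I})$, and the required asymptotic relation follows. As noted, the delicate point is the first step: establishing the Leibniz formula for $\Delta_{(x)}^{\alpha}$ together with the symbolic behaviour of $\chi(\varepsilon\langle\xi\rangle)$, for which one may freely choose the most convenient $L$-strongly admissible collection (since for $\rho>\delta$ the classes are independent of this choice, as recalled in Definition \ref{DEF: SymClass}).
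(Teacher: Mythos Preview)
The paper itself does not prove this theorem; it simply quotes the statement and refers to \cite{RT2016} for the proof. Your Borel--H\"ormander scheme with a cutoff $\chi(\varepsilon_j\langle\xi\rangle)$ and a diagonal choice of $\varepsilon_j$ is exactly the strategy used there, so in spirit your plan matches the source.

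That said, there is a genuine slip in your auxiliary lemma. You claim
\[
\bigl|\Delta_{(x)}^{\alpha}D_{x}^{(\beta)}[\chi(\varepsilon_{j}\langle\xi\rangle)\sigma_{j}]\bigr|\leq C_{j,\alpha,\beta}\,\varepsilon_{j}^{\kappa}\,\langle\xi\rangle^{m_{j}-\rho|\alpha|+\delta|\beta|}
\]
with $\kappa>0$, i.e.\ an $\varepsilon_j$-gain when measured in the $S^{m_j}$-scale. This is false already for $\alpha=\beta=0$: there $|\chi(\varepsilon_j\langle\xi\rangle)\sigma_j|\leq|\sigma_j|\leq C_j\langle\xi\rangle^{m_j}$ with no $\varepsilon_j$-improvement whatsoever. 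The gain only appears if you measure in a \emph{weaker} class, say $S^{m_{j-1}}$: on the support one has $\langle\xi\rangle\geq\varepsilon_j^{-1}$, hence $\langle\xi\rangle^{m_j}\leq\varepsilon_j^{m_{j-1}-m_j}\langle\xi\rangle^{m_{j-1}}$. Accordingly your diagonal step should read $p_{\alpha\beta}^{m_{j-1}}(\chi(\varepsilon_j\langle\cdot\rangle)\sigma_j)\leq 2^{-j}$ rather than $p_{\alpha\beta}^{m_j}$; with that correction the tail argument goes through as you wrote it.

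On the two points you flag as ``the main technical obstacle'': you are right that they are the heart of the matter, and in this non-harmonic calculus they are not free. The difference operators $\Delta_{(x)}^{\alpha}$ of Definition~\ref{DEF: DifferenceOper} are highly non-local in $\xi$ (they involve a sum over all $\eta\in\mathcal{I}$), so neither a Leibniz rule for $\Delta_{(x)}^{\alpha}(fg)$ nor the statement ``$\chi(\varepsilon\langle\cdot\rangle)\in S^0$'' is automatic. Both are established in \cite{RT2016}, and you should cite them rather than leave them as exercises. The same non-locality affects your final step: the assertion that a symbol supported on a finite subset of $\mathcal{I}$ lies in $S^{-\infty}(\overline{\Omega}\times\mathcal{I})$ is \emph{not} obvious here (applying $\Delta_{(x)}^{\alpha}$ can spread the $\xi$-support), and also needs the machinery from \cite{RT2016}. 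Once those ingredients are imported, your argument is correct.
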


Next theorem shows that the non-harmonic calculus is closed under taking adjoints \cite{RT2016}.

\begin{theorem}[Adjoint operators]
Let $0\leq\delta<\rho\leq 1$. Let $A\in {\rm Op}_L
(S^m_{\rho,\delta}(\overline{\Omega}\times\mathcal{I}))$.
Assume that the conjugate symbol
class $\widetilde{S}^{m}_{\rho,\delta}(\overline{\Omega}\times\mathcal{I})$
is defined with strongly admissible
functions $\widetilde{q}_{j}(x,y):=\overline{q_{j}(x,y)}$ which are ${L}^{*}$-strongly admissible.
Then the adjoint of $A$ satisfies
$A^{\ast}\in {\rm Op_{L^*}}(\widetilde{S}^{m}_{\rho,\delta}(\overline{\Omega}\times\mathcal{I}))$,
with its ${L}^*$-symbol
$\tau_{A^*}\in \widetilde{S}^{m}_{\rho,\delta}(\overline{\Omega}\times\mathcal{I})$
having the asymptotic expansion
$$
\tau_{A^*}(x,\xi) \sim \sum_\alpha \frac{1}{\alpha!}
\widetilde \Delta_x^\alpha D_x^{(\alpha)}\overline{\sigma_A(x,\xi)}.
$$
\end{theorem}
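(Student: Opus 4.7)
The plan is to derive the adjoint formula by a direct computation: compute $A^{\ast} v_\xi$ from the relation between the Schwartz kernels of $A$ and $A^{\ast}$, then Taylor-expand the result in the second variable with respect to the $L^{\ast}$-admissible collection $\widetilde q_j(x,y)=\overline{q_j(x,y)}$, and finally identify each resulting term with a difference-derivative of $\overline{\sigma_A}$.

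First I would invoke the $L^{\ast}$-analogue of \eqref{FormSymb}, namely $\tau_{A^{\ast}}(x,\xi)=v_\xi(x)^{-1}(A^{\ast} v_\xi)(x)$. Using $K_{A^{\ast}}(x,y)=\overline{K_A(y,x)}$ together with the kernel representation $K_A(x,y)=\sum_{\eta\in\mathcal I}u_\eta(x)\sigma_A(x,\eta)\overline{v_\eta(y)}$ coming from \eqref{Quantization} and \eqref{FourierTr}, one obtains
$$
\tau_{A^{\ast}}(x,\xi)=v_\xi(x)^{-1}\sum_{\eta\in\mathcal I}v_\eta(x)\int_{\Omega}\overline{\sigma_A(y,\eta)}\;\overline{u_\eta(y)}\;v_\xi(y)\,dy.
$$
Replacing $\overline{\sigma_A(y,\eta)}$ by $\overline{\sigma_A(x,\eta)}$ in this expression and using biorthogonality \eqref{BiorthProp} already collapses the $\eta$-sum to $\eta=\xi$, producing the leading term $\overline{\sigma_A(x,\xi)}$, which corresponds to $\alpha=0$ in the claimed expansion.

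Next, I would apply Proposition \ref{TaylorExp} to $y\mapsto\overline{\sigma_A(y,\eta)}$ around $y=x$ relative to $\widetilde q_j$, substitute the Taylor series into the display above, and exchange the $\alpha$-sum with the integral. The $\alpha$-th term then contains the factor $\int_{\Omega}\widetilde q^{\alpha}(x,y)\,v_\xi(y)\,\overline{u_\eta(y)}\,dy=\mathcal F_{L^{\ast}}\bigl(\widetilde q^{\alpha}(x,\cdot)v_\xi(\cdot)\bigr)(\eta)$. By the $L^{\ast}$-analogue of Definition \ref{DEF: DifferenceOper}, namely
$$
\widetilde\Delta_{(x)}^{\alpha}h(\xi)=v_\xi(x)^{-1}\sum_{\eta}v_\eta(x)\,\mathcal F_{L^{\ast}}\bigl(\widetilde q^{\alpha}(x,\cdot)v_\xi(\cdot)\bigr)(\eta)\,h(\eta),
$$
the resulting sum over $\eta$ reassembles precisely as $\widetilde\Delta_x^{\alpha}D_x^{(\alpha)}\overline{\sigma_A(x,\xi)}$, so the Taylor series in $y$ is converted term-by-term into the claimed asymptotic series in $\xi$.

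The remaining task, which I expect to be the main obstacle, is to show that the Taylor remainder cut off at $|\alpha|=N$ contributes a symbol in $\widetilde S^{m-(\rho-\delta)N}_{\rho,\delta}(\overline{\Omega}\times\mathcal I)$. The integral remainder supplied by Proposition \ref{TaylorExp} carries a factor $\widetilde q^{\alpha}(x,y)$ with $|\alpha|=N$ and a smooth residual built from higher $D^{(\alpha)}$-derivatives of $\overline{\sigma_A}$; rewriting this contribution as the action of a product of $\widetilde\Delta^{\alpha}$ and $D^{(\alpha)}_x$ on $\overline{\sigma_A}$ and combining the symbol estimates \eqref{EQ:symbol-class} with the order-gain properties of difference operators, one verifies that each order-$N$ difference operator in $\xi$ gains $\rho N$ while the $D_x^{(\alpha)}$-derivatives cost at most $\delta N$. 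Because $\rho>\delta$, successive remainders have strictly decreasing orders, so the asymptotic sum is well-defined in $\widetilde S^{m}_{\rho,\delta}(\overline{\Omega}\times\mathcal I)$ and the inclusion $A^{\ast}\in\mathrm{Op}_{L^{\ast}}(\widetilde S^{m}_{\rho,\delta}(\overline{\Omega}\times\mathcal I))$ follows. The delicate point is coordinating three different ingredients---the $L$-symbol estimates for $\sigma_A$, the $L^{\ast}$-Fourier and difference structure carried by $\widetilde q_j$, and the operator-to-symbol bijection of the $L^{\ast}$-quantisation---so that the residual symbol is genuinely controlled in the adjoint class.
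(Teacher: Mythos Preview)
The paper does not actually prove this theorem: it is stated without proof and the reader is referred to \cite{RT2016} (see the sentence ``For the proof of these important properties of the non-harmonic pseudo-differential calculus we refer the reader to \cite{RT2016}'' just after Theorem~\ref{Composition}). Your outline is the standard kernel-plus-Taylor-expansion argument, and it is precisely the approach carried out in \cite{RT2016} (and, mutatis mutandis, in \cite{Ruz} for compact Lie groups): compute $\tau_{A^\ast}(x,\xi)=v_\xi(x)^{-1}(A^\ast v_\xi)(x)$ via $K_{A^\ast}(x,y)=\overline{K_A(y,x)}$, Taylor-expand $\overline{\sigma_A(y,\eta)}$ in $y$ about $x$ using the $\widetilde q_j$, and recognise the $\alpha$-th term as $\frac{1}{\alpha!}\widetilde\Delta_x^{\alpha}D_x^{(\alpha)}\overline{\sigma_A(x,\xi)}$ through the $L^\ast$-difference operator formula. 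Your identification of the leading term via biorthogonality and of the general term via the $\mathcal F_{L^\ast}$-expression for $\widetilde\Delta^\alpha$ is correct.

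The part you flag as the main obstacle---controlling the Taylor remainder in $\widetilde S^{m-(\rho-\delta)N}_{\rho,\delta}$---is indeed where the work lies in \cite{RT2016}; your sketch (gain $\rho N$ from $\widetilde\Delta^\alpha$, loss $\delta N$ from $D^{(\alpha)}$, net gain since $\rho>\delta$) is the right heuristic, though the full argument requires handling the remainder term $g_N$ in Proposition~\ref{TaylorExp} uniformly in $\eta$ and not merely the finite Taylor polynomial.
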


We now formulate the composition formula \cite{RT2016}.
\begin{theorem}\label{Composition}
Let $m_{1}, m_{2}\in\mathbb R$ and $\rho>\delta\geq0$. Let $A,
B:C_{{L}}^{\infty}(\overline{\Omega})\rightarrow C_{{
L}}^{\infty}(\overline{\Omega})$ be continuous and linear, and assume that
their {L}-symbols satisfy
\begin{align*}
|\Delta_{(x)}^{\alpha}\sigma_{A}(x,\xi)|&\leq
C_{\alpha}\langle\xi\rangle^{m_{1}-\rho|\alpha|},\\
|D^{(\beta)}\sigma_{B}(x,\xi)|&\leq
C_{\beta}\langle\xi\rangle^{m_{2}+\delta|\beta|},
\end{align*}
for all $\alpha,\beta\geq 0$, uniformly in $x\in\overline{\Omega}$ and
$\xi\in\mathcal{I}$.
Then
\begin{equation}
\sigma_{AB}(x,\xi)\sim\sum_{\alpha\geq 0}
\frac{1}{\alpha!}(\Delta_{(x)}^{\alpha}\sigma_{A}(x,\xi))D^{(\alpha)}\sigma_{B}(x,\xi),
\label{CompositionForm}
\end{equation}
where the asymptotic expansion means that for every $N\in\mathbb N$ we have
$$
|\sigma_{AB}(x,\xi)-\sum_{|\alpha|<N}\frac{1}{\alpha!}(\Delta_{(x)}^{\alpha}\sigma_{A}(x,\xi))D^{(\alpha)}\sigma_{B}(x,\xi)|\leq
C_{N}\langle\xi\rangle^{m_{1}+m_{2}-(\rho-\delta)N}.
$$
\end{theorem}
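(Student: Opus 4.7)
The plan is to compute $\sigma_{AB}(x,\xi)$ directly from the identity $\sigma_{AB}(x,\xi) = u_\xi(x)^{-1}(ABu_\xi)(x)$ provided by \eqref{FormSymb}. Since $Bu_\xi(x) = u_\xi(x)\sigma_B(x,\xi)$, applying the $L$-quantization formula \eqref{Quantization} to the function $y\mapsto u_\xi(y)\sigma_B(y,\xi)$ yields
$$\sigma_{AB}(x,\xi)=u_\xi(x)^{-1}\sum_{\eta\in\mathcal{I}}u_\eta(x)\,\sigma_A(x,\eta)\,\mathcal{F}_L\bigl(u_\xi(\cdot)\sigma_B(\cdot,\xi)\bigr)(\eta).$$
Thus the whole calculation reduces to understanding the $L$-Fourier transform of the product $u_\xi(y)\sigma_B(y,\xi)$ with respect to $y$.

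Next, I would Taylor expand $\sigma_B(y,\xi)$ in the variable $y$ around $y=x$ with respect to the strongly admissible collection $\{q_j\}$ via Proposition \ref{TaylorExp}:
$$\sigma_B(y,\xi)=\sum_{|\alpha|<N}\frac{1}{\alpha!}D^{(\alpha)}_x\sigma_B(x,\xi)\,q^\alpha(x,y)+\sum_{|\alpha|=N}\frac{1}{\alpha!}q^\alpha(x,y)\,r_{\alpha,N}(x,y,\xi),$$
where the remainders $r_{\alpha,N}$ inherit symbol-type bounds on $D^{(\beta)}_yr_{\alpha,N}$ from those on $D^{(\beta)}\sigma_B$. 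Substituting the polynomial part into the representation above, and comparing with Definition \ref{DEF: DifferenceOper}, which gives $\Delta^\alpha_{(x)}\sigma_A(x,\xi)=u_\xi(x)^{-1}\sum_\eta u_\eta(x)\sigma_A(x,\eta)\mathcal{F}_L(q^\alpha(x,\cdot)u_\xi(\cdot))(\eta)$, one immediately identifies the finite-order part of $\sigma_{AB}$ as the desired sum $\sum_{|\alpha|<N}\tfrac{1}{\alpha!}(\Delta^\alpha_{(x)}\sigma_A(x,\xi))D^{(\alpha)}\sigma_B(x,\xi)$.

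The main technical step, and the place I expect genuine effort, is controlling the remainder
$$R_N(x,\xi)=\sum_{|\alpha|=N}\frac{1}{\alpha!}u_\xi(x)^{-1}\sum_{\eta\in\mathcal{I}}u_\eta(x)\,\sigma_A(x,\eta)\,\mathcal{F}_L\bigl(q^\alpha(x,\cdot)\,r_{\alpha,N}(x,\cdot,\xi)\,u_\xi(\cdot)\bigr)(\eta).$$
The strategy is to absorb the factor $q^\alpha$ with $|\alpha|=N$ into the $\sigma_A$ side, which by the difference-operator identity produces a gain of $\langle\eta\rangle^{-\rho N}$ from the hypothesis $|\Delta^\alpha_{(x)}\sigma_A(x,\eta)|\le C_\alpha\langle\eta\rangle^{m_1-\rho N}$, while the derivative bounds on $r_{\alpha,N}$ contribute at most $\langle\xi\rangle^{m_2+\delta N}$ growth. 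Combining the two yields the desired pointwise bound $|R_N(x,\xi)|\leq C_N\langle\xi\rangle^{m_1+m_2-(\rho-\delta)N}$ provided the remaining sum over $\eta$ converges absolutely.

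The delicate point is justifying that absolute convergence uniformly in $x$ and $\xi$, with a $\xi$-dependent constant compatible with the claimed order. I would secure it by repeatedly applying the operator $L$ (via integration by parts against $u_\eta$) inside the inner Fourier transform in order to trade smoothness of $r_{\alpha,N}$ and $u_\xi$ for arbitrary polynomial decay in $\langle\eta\rangle^{-1}$, and then invoking the summability assumption \textnormal{(A4)}. This is the non-harmonic analogue of the argument for compact Lie groups in \cite{Ruz}; the ratio $u_\eta(x)/u_\xi(x)$ appearing in $R_N$ is fully absorbed by the definition of $\Delta_{(x)}$, so no WZ-type lower bound on the eigenfunctions is needed at this stage.
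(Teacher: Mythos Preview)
The paper does not prove this theorem itself; immediately after stating it (and the adjoint and parametrix results) the text refers the reader to \cite{RT2016} for the proofs. Your outline is precisely the strategy carried out there and, in its prototype form, in \cite{Ruz} for compact Lie groups: write $\sigma_{AB}(x,\xi)=u_\xi(x)^{-1}A\bigl(\sigma_B(\cdot,\xi)u_\xi\bigr)(x)$, Taylor-expand $\sigma_B(y,\xi)$ about $y=x$ via Proposition~\ref{TaylorExp}, and identify the polynomial terms with $(\Delta_{(x)}^\alpha\sigma_A)\,D^{(\alpha)}\sigma_B$ through Definition~\ref{DEF: DifferenceOper}.

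Two places in your remainder sketch deserve tightening. First, you say $q^\alpha$ is absorbed into the $\sigma_A$ side ``by the difference-operator identity'', but that identity in Definition~\ref{DEF: DifferenceOper} applies when the $y$-integrand is $q^\alpha(x,y)u_\xi(y)$; in $R_N$ the integrand is $q^\alpha(x,y)\,r_{\alpha,N}(x,y,\xi)\,u_\xi(y)$ with the extra $y$-dependent factor, so $\Delta_{(x)}^\alpha\sigma_A$ does not appear directly and an additional argument (e.g.\ an amplitude-type estimate or a further expansion of $r_{\alpha,N}$) is needed. Second, mere absolute convergence of the $\eta$-sum does not turn the gain $\langle\eta\rangle^{m_1-\rho N}$ into $\langle\xi\rangle^{m_1-\rho N}$; one must also show that the inner Fourier coefficient decays rapidly in a quantity measuring the separation of $\eta$ from $\xi$, so that the sum localises near $\langle\eta\rangle\sim\langle\xi\rangle$. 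Your integration-by-parts via $L$ is the right tool, but it has to be deployed to produce this joint decay, not just summability in $\eta$. Finally, the remark that ``no WZ-type lower bound is needed'' is misleading in this framework: formula~\eqref{FormSymb} and Definition~\ref{DEF: DifferenceOper} already presuppose $u_\xi(x)\neq 0$, so the WZ condition is implicit in the very objects you are manipulating.
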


For the proof of these important properties of the non-harmonic pseudo-differential calculus we refer the reader to  \cite{RT2016}. In the next section we analyse the Dixmier traceability and the expansion of traces of $L$-elliptic operators. They are operators whose symbols satisfy \eqref{Iesparametrix}. 

\begin{theorem}\label{IesTParametrix} Let $m\in \mathbb{R},$ and let $0\leqslant \delta<\rho\leqslant 1.$  Let  $a=a(x,\xi)\in {S}^{m}_{\rho,\delta}(M\times \mathcal{I}).$  Assume also that $a(x,\xi)$ is invertible for every $(x,\xi)\in M\times\mathcal{I}
,$ and satisfies
\begin{equation}\label{Iesparametrix}
 \Vert  \langle \cdot\rangle^m a^{-1} \Vert_{L^{\infty}(\mathcal{I})}:= \sup_{(x,\xi)\in M\times\mathcal{I}} \vert \langle \xi\rangle^ma(x,\xi)^{-1}\vert<\infty.
\end{equation}Then, there exists $B\in {S}^{-m}_{\rho,\delta}(M\times \mathcal{I}),$ such that $AB-I,BA-I\in {S}^{-\infty}(M\times \mathcal{I}). $ Moreover, the symbol of $B$ satisfies the following asymptotic expansion
\begin{equation}\label{AE}
    \widehat{B}(x,\xi)\sim \sum_{N=0}^\infty\widehat{B}_{N}(x,\xi),\,\,\,(x,\xi)\in M\times \mathcal{I},
\end{equation}where $\widehat{B}_{N}(x,\xi)\in {S}^{-m-(\rho-\delta)N}_{\rho,\delta}(M\times \mathcal{I})$ obeys to the inductive  formula
\begin{equation}\label{conditionelip}
    \widehat{B}_{N}(x,\xi)=-a(x,\xi)^{-1}\left(\sum_{k=0}^{N-1}\sum_{|\gamma|=N-k}(\Delta_{(x)}^{\gamma} a(x,\xi))(D_{x}^{(\gamma)}\widehat{B}_{k}(x,\xi))\right),\,\,N\geqslant 1,
\end{equation}with $ \widehat{B}_{0}(x,\xi)=a(x,\xi)^{-1}.$
\end{theorem}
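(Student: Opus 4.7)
The plan is to construct the parametrix $B$ as an asymptotic sum $\widehat{B} \sim \sum_{N=0}^{\infty} \widehat{B}_N$ with the prescribed recursion, in complete analogy with the construction of a parametrix for elliptic pseudo-differential operators in the classical H\"ormander calculus, but carried out in the global $L$-symbolic framework of Section \ref{SEC:differences}. The existence of the asymptotic sum itself is guaranteed by the Asymptotic Sums of Symbols theorem stated above, provided we can show that $\widehat{B}_N \in S^{-m-(\rho-\delta)N}_{\rho,\delta}(M\times \mathcal{I})$ for every $N \geq 0$. The cancellation that forces $AB - I, BA - I \in S^{-\infty}(M\times \mathcal{I})$ then follows from the composition formula of Theorem \ref{Composition}.

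The first step is to verify that $\widehat{B}_0(x,\xi) := a(x,\xi)^{-1}$ belongs to $S^{-m}_{\rho,\delta}(M\times \mathcal{I})$. The ellipticity hypothesis \eqref{Iesparametrix} gives the zeroth order bound $|a^{-1}(x,\xi)| \lesssim \langle \xi \rangle^{-m}$ uniformly. For mixed $D^{(\beta)}_x \Delta^{\alpha}_{(x)}$ derivatives I will proceed by induction on $|\alpha|+|\beta|$, combining: (i) the usual Leibniz rule for the differential operators $D^{(\beta)}$ applied to the identity $a \cdot a^{-1} = 1$ (which gives $D^{(\beta)}(a^{-1})$ as a sum of products of $a^{-1}$ and derivatives of $a$), and (ii) the Leibniz-type formula for the difference operators $\Delta^{\alpha}_{(x)}$ acting on products, which, as derived in \cite{RT2016} from the expansion in a strongly admissible collection, expresses $\Delta^{\alpha}_{(x)}(fg)$ as a finite sum of terms $\Delta^{\alpha_1}_{(x)}(f)\,\Delta^{\alpha_2}_{(x)}(g)$ with $\alpha_1 + \alpha_2 = \alpha$. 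Applied to $a \cdot a^{-1} = 1$ and rearranged, these produce a uniform bound $|\Delta^{\alpha}_{(x)} D^{(\beta)}_x a^{-1}| \lesssim \langle \xi\rangle^{-m - \rho|\alpha| + \delta|\beta|}$, which is exactly the $S^{-m}_{\rho,\delta}$ requirement.

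Next I induct on $N$ to show $\widehat{B}_N \in S^{-m-(\rho-\delta)N}_{\rho,\delta}(M\times \mathcal{I})$. Assuming the claim for all $k < N$, the right-hand side of \eqref{conditionelip} is a sum of terms $a^{-1} \cdot (\Delta^{\gamma}_{(x)} a)(D^{(\gamma)}_x \widehat{B}_k)$ with $|\gamma| = N-k$. By the symbol estimates for $a \in S^m_{\rho,\delta}$ we gain $\langle\xi\rangle^{-\rho|\gamma|}$ from $\Delta^{\gamma}_{(x)}a$, by the inductive hypothesis we have $D^{(\gamma)}_x \widehat{B}_k \in \langle\xi\rangle^{-m-(\rho-\delta)k + \delta|\gamma|}$, and by Step 1 the factor $a^{-1}$ contributes $\langle\xi\rangle^{-m}$. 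Multiplying, the total order is $-m - (\rho-\delta)N$ as claimed; the estimates for further $\Delta^{\alpha}_{(x)} D^{(\beta)}_x$ derivatives follow from the same Leibniz-type manipulations as in Step 1. The asymptotic sum theorem now produces $B \in S^{-m}_{\rho,\delta}(M\times \mathcal{I})$ with \eqref{AE}.

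Finally, apply the composition formula of Theorem \ref{Composition} to $A \circ B$. Expanding
\[
\sigma_{AB}(x,\xi) \sim \sum_{\beta \geq 0} \frac{1}{\beta!} (\Delta^{\beta}_{(x)} a)(x,\xi) \cdot D^{(\beta)}_x \widehat{B}(x,\xi),
\]
substituting the asymptotic expansion of $\widehat{B}$, and collecting terms by homogeneity order shows, by an inductive reading of the recursion \eqref{conditionelip}, that every finite partial sum of order $> -N(\rho-\delta)$ cancels identically. Hence $\sigma_{AB} - 1 \in S^{-\infty}(M\times \mathcal{I})$, so $AB - I$ is smoothing. The argument for $BA - I$ is symmetric, using that one may equally build a right parametrix by the analogous recursion and then show by the usual trick $B_{\mathrm{right}} - B_{\mathrm{left}} \in S^{-\infty}$ that the two agree modulo $S^{-\infty}$. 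The main technical obstacle I anticipate is the inductive verification that $a^{-1} \in S^{-m}_{\rho,\delta}$, since the Leibniz rule for $\Delta^{\alpha}_{(x)}$ in the non-harmonic setting is not as transparent as its Euclidean counterpart and must be handled via the admissible collection $\{q_j\}$; once this is in place, all subsequent steps are routine combinations of symbolic estimates and the composition calculus.
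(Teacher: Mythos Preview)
The paper does not supply a proof of this theorem: it is stated in the preliminaries section as a recalled result, with the sentence immediately preceding it referring the reader to \cite{RT2016} for the proofs of the calculus properties, and the functional calculus aspects to \cite{CardonaKumarRuzhanskyTokmagambetov2020II}. So there is no in-paper argument to compare against.

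Your proposal follows the standard parametrix construction in the H\"ormander calculus, transported to the $L$-symbolic setting, and this is exactly the route taken in the references the paper cites. The inductive order count for $\widehat{B}_N$ is correct, the appeal to the asymptotic-sum theorem is appropriate, and the use of the composition formula (Theorem \ref{Composition}) to force $\sigma_{AB}-1\in S^{-\infty}$ is the right mechanism; the recursion \eqref{conditionelip} is precisely designed so that the terms of total order $-(\rho-\delta)N$ in the composition expansion cancel. The one point to watch is the one you flag yourself: the Leibniz behaviour of the difference operators $\Delta^{\alpha}_{(x)}$ in this non-harmonic framework is not the clean Euclidean splitting $\sum_{\alpha_1+\alpha_2=\alpha}$, because the $\Delta^{\alpha}_{(x)}$ are defined through multiplication by $q^{\alpha}(x,\cdot)$ on the kernel side rather than as genuine differences on $\mathcal{I}$. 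The correct way to get $a^{-1}\in S^{-m}_{\rho,\delta}$ is therefore to work at the kernel level (multiplying the convolution kernel by $q^{\alpha}$) and use the Taylor-type expansion of Proposition \ref{TaylorExp}, as done in \cite{RT2016}, rather than asserting a symbol-level Leibniz identity. With that adjustment your outline is complete and matches the intended argument.
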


We have the following Calder\'on-Vaillancourt Theorem from \cite{CardonaKumarRuzhanskyTokmagambetov2020II}. For the $L^{p}$-$L^q$-boundedness of pseudo-differential operators in the setting of non-harmonic analysis we refer the reader to \cite{CardVishTokRuzI}.
\begin{theorem}
\label{L2}  
Let $a(x,D):C^\infty_L(M)\rightarrow\mathcal{D}'_L(M)$ be a pseudo-differential  operator with symbol  $a\in {S}^{0}_{\rho,\delta}( M\times \mathcal{I})$. Then $a(x,D)$ extends to a bounded operator on $L^2({M})$. 
\end{theorem}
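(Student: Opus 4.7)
The plan is to adapt H\"ormander's square-root trick to the non-harmonic calculus of \cite{RT2016}. Since $a\in S^{0}_{\rho,\delta}(\overline{\Omega}\times\mathcal{I})$, the operator $A=\textnormal{Op}_{L}(a)$ maps $C^{\infty}_{L}(\overline{\Omega})$ to itself. First I would invoke the adjoint theorem for the non-harmonic calculus to obtain that $A^{*}$ lies in $\textnormal{Op}_{L^{*}}(\widetilde{S}^{0}_{\rho,\delta})$, and then apply the composition formula (Theorem \ref{Composition}) to deduce that $A^{*}A$ is a pseudo-differential operator of order zero whose principal symbol is $|a(x,\xi)|^{2}$, uniformly bounded on $\overline{\Omega}\times \mathcal{I}$.

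Second, I would fix a constant $M>\sup_{(x,\xi)}|a(x,\xi)|^{2}+1$. The symbol $c(x,\xi):=M-|a(x,\xi)|^{2}$ lies in $S^{0}_{\rho,\delta}$ and is uniformly bounded below by $1$, hence is $L$-elliptic in the sense of \eqref{elliptic}. Using Theorem \ref{IesTParametrix} (or a direct induction on differences/derivatives), $b_{0}:=\sqrt{c}\in S^{0}_{\rho,\delta}$. I would then iteratively construct correction terms $b_{j}\in S^{-(\rho-\delta)j}_{\rho,\delta}$ so that, setting $b\sim \sum_{j\geq 0}b_{j}$ via the asymptotic-summation theorem and $B=\textnormal{Op}_{L}(b)$, the composition formula for $B^{*}B$ reproduces $MI - A^{*}A$ modulo $\Psi^{-\infty}$. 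At each step, $b_{j+1}$ is determined from an algebraic relation of the form $2\sqrt{c}\cdot b_{j+1}=-r_{j}$, where $r_{j}\in S^{-(\rho-\delta)(j+1)}_{\rho,\delta}$ is the remainder generated by $b_{0},\ldots,b_{j}$; invertibility of $\sqrt{c}$ (bounded below) makes this algebraic solution lie again in $S^{-(\rho-\delta)(j+1)}_{\rho,\delta}$.

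Once $B$ has been constructed, for $f\in C^{\infty}_{L}(\overline{\Omega})$ we then have
\begin{equation*}
\|Af\|_{L^{2}}^{2}=(A^{*}Af,f)_{L^{2}}=M\|f\|_{L^{2}}^{2}-\|Bf\|_{L^{2}}^{2}-(Rf,f)_{L^{2}}\leq (M+\|R\|_{\textnormal{op}})\|f\|_{L^{2}}^{2},
\end{equation*}
where $R\in \Psi^{-\infty}$. It remains to check that every $R\in \Psi^{-\infty}$ is bounded on $L^{2}(\overline{\Omega})$: writing its kernel as $K_{R}(x,y)=\sum_{\xi\in\mathcal{I}}u_{\xi}(x)\sigma_{R}(x,\xi)\overline{v_{\xi}(y)}$, the rapid decay of $\sigma_{R}$ in $\xi$ dominates the polynomial growth of $u_{\xi},v_{\xi}$ together with the summability in \eqref{Assumption_4}, giving absolute and uniform convergence, so $R$ is Hilbert-Schmidt on $L^{2}$. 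A density argument using $C^{\infty}_{L}(\overline{\Omega})\subset L^{2}(\overline{\Omega})$ then extends $A$ to a bounded operator.

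The hard part is the iterative square-root construction in the critical case $\rho=\delta$: Theorem \ref{Composition} requires $\rho>\delta$, so the naive procedure breaks down because the composition of two order-zero operators gains no order in the remainder. In that borderline case I would instead perform a dyadic spectral decomposition of $a$ with respect to $\langle\xi\rangle$ and invoke the Cotlar--Stein almost-orthogonality lemma to sum the resulting pieces, using adjoint and composition estimates for operators whose symbols are localised in disjoint dyadic shells; this is the most technically delicate step. For $0\leq\delta<\rho\leq 1$ the iterative square-root argument described above goes through directly.
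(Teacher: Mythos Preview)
The paper does not prove this theorem; it is quoted from \cite{CardonaKumarRuzhanskyTokmagambetov2020II}, so there is no proof here against which to compare directly. That said, your overall strategy (H\"ormander's square-root trick for $\rho>\delta$, Cotlar--Stein for the borderline case) is the standard route and is certainly the right shape.

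There is, however, a genuine gap in your implementation that is specific to the non-harmonic setting and that you have glossed over. The adjoint theorem you invoke places $A^{*}$ in $\textnormal{Op}_{L^{*}}(\widetilde{S}^{0}_{\rho,\delta})$, the \emph{$L^{*}$-calculus}, built from the biorthogonal system $\{v_{\xi}\}$ rather than $\{u_{\xi}\}$. The composition formula (Theorem \ref{Composition}) is stated only for two operators in the $L$-calculus, both acting on $C^{\infty}_{L}(\overline{\Omega})$; it says nothing about composing an $L^{*}$-quantised operator with an $L$-quantised one. Thus the assertion that $A^{*}A$ is an $L$-pseudo-differential operator of order zero with principal symbol $|a|^{2}$ is not justified by the tools you cite, and in the genuinely non-self-adjoint case ($L\neq L^{*}$, $u_{\xi}\neq v_{\xi}$) this is not a mere formality: $A^{*}$ maps $C^{\infty}_{L^{*}}$ to itself, not $C^{\infty}_{L}$ to itself, so even the domain bookkeeping needs attention before a symbolic composition can be written down. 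The iterative construction of $B$ with $B^{*}B\equiv MI-A^{*}A\pmod{\Psi^{-\infty}}$ has the same problem at every step, since $B^{*}$ again lands in the conjugate calculus.

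To repair this you would need either a mixed composition theorem for $\textnormal{Op}_{L^{*}}\circ\textnormal{Op}_{L}$ (which is what the cited paper \cite{CardonaKumarRuzhanskyTokmagambetov2020II} presumably develops, together with the functional calculus), or a device that transfers $A^{*}$ back into the $L$-calculus with controlled symbol estimates. Without one of these, the square-root argument does not go through in the non-self-adjoint setting. Your treatment of the $\rho=\delta$ case via Cotlar--Stein is only sketched, and the same $L$/$L^{*}$ mismatch would reappear in the almost-orthogonality estimates for $T_{j}^{*}T_{k}$; you should flag explicitly what extra input is needed there as well.
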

Throughout the paper, we shall use the notation $A \lesssim B$ to indicate $A\leq cB $ for a suitable constant $c >0,$   whereas $A \asymp B$ if $A\leq cB$ and $B\leq d A$, for suitable $c, d >0.$

\section{Asymptotic expansions for  regularised traces of  $L$-elliptic  global pseudo-differential operators}\label{expansionshere}
In this section we will study the trace for the heat semigroup associated with $L$-elliptic positive left-invariant operators and also  regularised traces of $L$-elliptic operators. In this section we do not require the WZ-condition on eigenfunctions.  We make the following standing hypothesis for the Weyl eigenvalue counting function $N(\lambda)$  of the operator $(1+LL^\circ)^{\frac{1}{2\nu}}$ for the rest of the paper.
\begin{itemize}
    \item{(WL):} The operator $(1+L^\circ L)^{\frac{1}{2\nu}}$ satisfies the Weyl law for some $Q>0.$ This means that $N(\lambda):=|\{\xi\in \mathcal{I}:(1+|\lambda_\xi|^2)^\frac{1}{2\nu}\leq \lambda \}|\sim \lambda^{Q}.$ Certainly, we consider the smallest $Q$ with this property when $\lambda\rightarrow\infty.$ 
\end{itemize}
\begin{remark}
By replacing $\lambda>0,$ by $\lambda^{\frac{1}{\nu}},$ in (WL) we have $$N(\lambda^{\frac{1}{\nu}}):=|\{\xi\in \mathcal{I}:(1+|\lambda_\xi|^2)^\frac{1}{2}\leq \lambda \}|\sim \lambda^{\frac{Q}{\nu}}.$$
\end{remark}
\begin{remark} The class of
symbols $\sigma(\xi),$ $\xi\in \mathcal{I},$ in $ {S}^{m}_{\rho,\delta}(M\times   \mathcal{I})$ independent of the first argument $x\in M,$ will be denoted by $ {S}^{m}_{\rho}(  \mathcal{I}).$ The associated operators are called  $L$-Fourier multipliers.
\end{remark}
The following lemma justifies the fact that (WL) is natural to assume when dealing with global H\"ormander classes related to the nonharmonic analysis on manifolds. 

\begin{lemma}\label{equivalence} Assumption $(WL)$ above is equivalent to Assumption \ref{Assumption_4}. 
\end{lemma}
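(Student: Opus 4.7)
The plan is to establish the equivalence by proving both implications using the standard Abel summation / integration-by-parts relation between the counting function $N(\lambda)$ and the series $\sum_{\xi}\langle\xi\rangle^{-s_0}$. I will interpret (WL) in the sense that there exists a smallest $Q>0$ such that $N(\lambda)\lesssim \lambda^{Q}$ as $\lambda\to\infty$ (possibly with a matching lower bound), which is the standard reading given the remark that one takes the smallest such $Q$.

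For the direction (WL) $\Rightarrow$ Assumption \ref{Assumption_4}, I would enumerate $\mathcal{I}$ according to the ascending ordering of $\langle\xi\rangle$ and rewrite the sum as a Stieltjes integral against $dN(\lambda)$. Integration by parts yields, for any $s_0>0$,
\begin{equation*}
\sum_{\xi\in\mathcal{I}}\langle\xi\rangle^{-s_0} = \int_{1}^{\infty}\lambda^{-s_0}\,dN(\lambda) + N(1) = s_0 \int_{1}^{\infty} N(\lambda)\,\lambda^{-s_0-1}\,d\lambda + N(1),
\end{equation*}
assuming $N(\lambda)\lambda^{-s_0}\to 0$, which is ensured by the hypothesis for $s_0>Q$. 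Since $N(\lambda)\leq C\lambda^{Q}$, the integral is bounded by $C\int_{1}^{\infty}\lambda^{Q-s_0-1}d\lambda$, which is finite as soon as $s_0>Q$. Choosing any such $s_0$ establishes \eqref{Assumption_4}.

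For the converse Assumption \ref{Assumption_4} $\Rightarrow$ (WL), I would argue by the one-line estimate: for every $\lambda\geq 1$,
\begin{equation*}
N(\lambda) = \sum_{\langle\xi\rangle\leq \lambda} 1 \leq \sum_{\langle\xi\rangle\leq \lambda}\frac{\lambda^{s_0}}{\langle\xi\rangle^{s_0}} \leq \lambda^{s_0}\sum_{\xi\in\mathcal{I}}\langle\xi\rangle^{-s_0} =: C\,\lambda^{s_0},
\end{equation*}
so that $N(\lambda)\lesssim \lambda^{s_0}$ as $\lambda\to\infty$. Taking $Q$ to be the infimum of exponents $s_0$ for which $\sum\langle\xi\rangle^{-s_0}<\infty$ (equivalently, the order of the Dirichlet-type series attached to $\{\langle\xi\rangle\}$), one obtains the smallest $Q>0$ with the Weyl-type bound $N(\lambda)\lesssim\lambda^{Q+\varepsilon}$ for every $\varepsilon>0$, which is the content of (WL) in the sense stipulated.

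The main potential obstacle is the precise meaning of the symbol $\sim$ in (WL): a strict asymptotic $N(\lambda)/\lambda^{Q}\to c>0$ cannot follow from the abstract convergence of a single series, since summability of $\langle\xi\rangle^{-s_0}$ gives only a polynomial upper bound. The equivalence therefore must be understood at the level of polynomial growth orders, and I would make this reading explicit at the start of the proof, citing the sentence in the statement which selects the smallest admissible $Q$. With this convention the two bounds above are genuinely equivalent, and both exponents may be chosen equal to $\inf\{s_0 : \sum_\xi\langle\xi\rangle^{-s_0}<\infty\}$.
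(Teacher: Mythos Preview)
Your proof is correct and arrives at the same conclusion, but the mechanics differ from the paper's argument. The paper proves both implications via dyadic decomposition: for (A4) $\Rightarrow$ (WL) it writes
\[
\sum_{\xi}\langle\xi\rangle^{-s_0}\asymp\sum_{k\geq 0}2^{-ks_0}N(2^{k}),
\]
deduces $2^{-ks_0}N(2^{k})=O(1/k)$ from convergence, and hence $N(\lambda)=O(\lambda^{s_0})$; for (WL) $\Rightarrow$ (A4) the same dyadic sum gives $\sum_k 2^{k(Q-s)}<\infty$ for $s>Q$. Your route instead uses the Stieltjes integral and integration by parts for one direction and a one-line Chebyshev bound $N(\lambda)\le \lambda^{s_0}\sum_\xi\langle\xi\rangle^{-s_0}$ for the other. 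Your Chebyshev argument is slightly cleaner than the paper's dyadic deduction; the integration-by-parts step is the classical Tauberian device, equivalent to what the paper does discretely. Your remark about the meaning of $\sim$ is well taken and matches what the paper actually proves: only a polynomial upper bound on $N(\lambda)$ follows from (A4), not a sharp two-sided asymptotic.
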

\begin{proof}
    Suppose that Assumption \ref{Assumption_4} holds. Then, there is
$s_0\in\mathbb R$  such that we have
$$\sum_{\xi\in\mathcal{I}} \langle\xi\rangle^{-s_0}<\infty.$$ Observe that,
\begin{align*}
    \sum_{\xi\in\mathcal{I}}\langle\xi\rangle^{-s_0}&=\sum_{k=0}^\infty\sum_{\xi: 2^{k}\leq \langle\xi\rangle<2^{k+1}}\langle\xi\rangle^{-s_0}\asymp \sum_{k=0}^\infty\sum_{\xi: 2^{k}\leq \langle\xi\rangle<2^{k+1}}2^{-ks_0}\\
    &\lesssim \sum_{k=0}^\infty2^{-ks_0}N(2^{k})<\infty.
\end{align*}
From the convergence of the last series we deduce the estimate $2^{-ks_0}N(2^{k})=O(\frac{1}{k})$ which implies $N(\lambda)=O(\frac{1}{\log(\lambda)}\lambda^{s_0})$ when $\lambda\rightarrow\infty$ for some $s_0.$ So, we have in particular $N(\lambda)=O(\lambda^{s_0}),$ when $\lambda\rightarrow\infty. $ This proves the first part of the lemma. Now, let us assume that $N(\lambda)=O(\lambda^{Q})$ for some $Q>0.$ Then, for $s>0,$ 
\begin{align*}
    \sum_{\xi\in\mathcal{I}}\langle\xi\rangle^{-s}&=\sum_{k=0}^\infty\sum_{\xi: 2^{k}\leq \langle\xi\rangle<2^{k+1}}\langle\xi\rangle^{-s}\asymp \sum_{k=0}^\infty\sum_{\xi: 2^{k}\leq \langle\xi\rangle<2^{k+1}}2^{-ks}\\
    &\lesssim \sum_{k=0}^\infty2^{-ks+kQ}<\infty,
\end{align*}if and only if $s>Q.$ So, if we take $s_0>Q,$ Assumption \ref{Assumption_4} is satisfied. The proof is complete.
\end{proof}

So, we start with the following Pleijel type formula. We record that an operator $A:C^\infty_L(M)\rightarrow\mathcal{D}'_L(M)$ is $L$-elliptic if its symbol satisfies \eqref{Iesparametrix}.
 \begin{lemma}\label{asymptotictracemultiplier}  
Let $M=\overline{\Omega}$ be a smooth manifold with (possibly empty)  boundary $\partial \Omega.$   For  $0\leqslant \rho\leqslant 1,$ let us consider a positive   $L$-elliptic continuous linear operator $A:C^\infty_L(M)\rightarrow\mathcal{D}'_L(M)$ with symbol  $\sigma\in {S}^{m}_{\rho}(  \mathcal{I})$, $m> 0$. Then the heat trace of $A$ has an asymptotic behaviour of the form
\begin{equation}\label{asymp1}
   \textnormal{\bf {Tr}} (e^{-tA})\sim c_{m,Q} t^{-\frac{Q}{m}}\times \int\limits_{t^{\frac{1}{m}}}^{\infty}e^{-s^m}s^{Q} ds,\,\,\forall t>0.
\end{equation} Moreover, we have the following asymptotic expansion
\begin{equation}
 \textnormal{\bf {Tr}}(e^{-sA})=s^{-\frac{Q}{m}}\left(  \sum_{k=0}^{\infty}a_ks^{\frac{k}{m}}\right),\,\,s\rightarrow 0^{+}  .
\end{equation}
\end{lemma}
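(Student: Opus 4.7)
The plan is to exploit that the symbol $\sigma$ depends only on $\xi$, so that $A$ is an $L$-Fourier multiplier which is diagonal on the basis $\{u_\xi\}_{\xi\in\mathcal{I}}$. Applying the quantisation formula \eqref{Quantization} to $f = u_\eta$ together with the biorthogonality \eqref{BiorthProp} gives $A u_\eta = \sigma(\eta) u_\eta$ for every $\eta \in \mathcal{I}$. The $L$-ellipticity condition \eqref{Iesparametrix}, paired with the upper symbol estimate $|\sigma(\xi)|\lesssim \langle\xi\rangle^{m}$ coming from $\sigma\in S^m_\rho(\mathcal{I})$, yields the two-sided bound $\sigma(\xi)\asymp \langle\xi\rangle^{m}$ for $\langle\xi\rangle$ large, while positivity of $A$ forces $\sigma(\xi)>0$ for all $\xi$. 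Combining these observations with the functional calculus of \cite{CardonaKumarRuzhanskyTokmagambetov2020II} one arrives at
\begin{equation*}
\textnormal{\bf{Tr}}(e^{-tA}) = \sum_{\xi\in\mathcal{I}} e^{-t\sigma(\xi)},
\end{equation*}
and this series converges absolutely for every $t > 0$ in view of (WL) together with $m > 0$.

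The next step is to analyse this series asymptotically through the Weyl counting function $N(\lambda)$. Writing the sum as a Stieltjes integral one obtains
\begin{equation*}
\sum_{\xi\in\mathcal{I}}e^{-t\sigma(\xi)}\asymp\int_{0}^{\infty}e^{-t\lambda^{m}}\,dN(\lambda),
\end{equation*}
modulo the low-frequency part of the sum and the constant factor implicit in $\sigma(\xi)\asymp\langle\xi\rangle^{m}$. An integration by parts, followed by the rescaling $s = t^{1/m}\lambda$, simultaneously extracts the factor $t^{-Q/m}$ and reshapes the integration region into $[t^{1/m},\infty)$, so that insertion of the Weyl asymptotic $N(\lambda)\sim c_Q\lambda^{Q}$ from (WL) delivers the bound \eqref{asymp1} with the constant $c_{m,Q}$ determined by the leading Weyl coefficient.

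For the asymptotic series as $t\to 0^+$, I would split
\begin{equation*}
\int_{t^{1/m}}^{\infty}e^{-s^m} s^{Q}\,ds = \int_0^\infty e^{-s^m} s^Q\, ds - \int_0^{t^{1/m}} e^{-s^m} s^Q\, ds,
\end{equation*}
the first summand being a constant that, after multiplication by $t^{-Q/m}$, supplies the leading contribution $a_0 t^{-Q/m}$. For the second I would expand $e^{-s^m} = \sum_{k\geq 0}(-1)^k s^{km}/k!$ and integrate term by term on $[0, t^{1/m}]$, producing a convergent power series whose terms, after multiplication by the prefactor $t^{-Q/m}$ and relabelling, contribute to the series $\sum_k a_k t^{k/m}$. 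The principal obstacle is upgrading this from a leading-order statement to a genuine Poincar\'e asymptotic expansion: the elementary Taylor expansion above only captures the powers of $t^{1/m}$ inherited from the region $[0,t^{1/m}]$, and to identify the remaining coefficients one must propagate quantitative remainders in (WL) beyond its leading term. This is done by iterating the integration by parts in parallel with the symbolic calculus for $e^{-tA}$ through the composition formula \eqref{CompositionForm}, ensuring that each successive error term produces a strictly lower order contribution in $t$ and hence a well-defined coefficient $a_k$ at every step.
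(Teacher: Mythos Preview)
Your derivation of the first asymptotic is essentially the paper's argument: both of you write $\textnormal{\bf Tr}(e^{-tA})=\sum_{\xi}e^{-t\sigma(\xi)}$, use the $L$-ellipticity together with the zero-order symbol bound to get $\sigma(\xi)\asymp\langle\xi\rangle^{m}$, and then insert the Weyl law. The only cosmetic difference is that the paper uses a dyadic decomposition $\sum_{k}\sum_{2^{k}\le\langle\xi\rangle<2^{k+1}}$ and compares the resulting sum in $k$ with $\int_{1}^{\infty}e^{-t\lambda^{m}}\lambda^{Q-1}\,d\lambda$, whereas you pass through the Stieltjes integral $\int e^{-t\lambda^{m}}\,dN(\lambda)$ and integrate by parts; both routes produce $t^{-Q/m}\int_{t^{1/m}}^{\infty}e^{-s^{m}}s^{Q-1}\,ds$ (the exponent $s^{Q}$ in the displayed statement is a slip---the proof lands on $s^{Q-1}$).

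Where you diverge is in the asymptotic expansion. The paper does \emph{not} invoke the composition formula \eqref{CompositionForm}, any iterated symbolic calculus for $e^{-tA}$, or refined remainders in (WL). It simply observes that $g(t):=\int_{t}^{\infty}e^{-s^{m}}s^{Q-1}\,ds$ is real-analytic on $\mathbb{R}^{+}$ (the integrand being entire), so $g$ admits a convergent power series at $0$; since $\textnormal{\bf Tr}(e^{-sA})\sim c_{m,Q}\,s^{-Q/m}g(s^{1/m})$, the function $F(s):=s^{Q/m}\textnormal{\bf Tr}(e^{-sA})$ is declared real-analytic in $s^{1/m}$ and the expansion follows. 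Your worry that one must propagate Weyl remainders beyond leading order to get a genuine Poincar\'e expansion is reasonable in principle, but the paper's ``$\sim$'' is really the two-sided $\asymp$ carried over from the dyadic estimates, and the expansion is extracted from the analyticity of $g$ rather than from any symbol iteration. So your final paragraph replaces a one-line analyticity remark by machinery (the composition formula, parametrix-type iteration) that the paper neither needs nor uses, and whose relevance to $e^{-tA}$ you have not made concrete.
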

 \begin{proof}Note that $A$ is densely defined and positive on $L^2(M),$ so it admits a self-adjoint extension.
The symbol of $A$ is determined by the sequence $ 
    \sigma\equiv[\sigma(\xi)]_{\xi\in \mathcal{I}}.$ 
Using the spectral mapping theorem we have
\begin{equation*}
    \textnormal{spectrum}(e^{-tA})=\{e^{-t\sigma(\xi)}:\xi\in  \mathcal{I}\}.
    \end{equation*} 
So, we have
    \begin{align*}
        \textnormal{\bf {Tr}} (e^{-tA})=\sum_{\xi\in   \mathcal{I}}e^{-t\sigma(\xi)}.
    \end{align*} 
Using the fact that $A$ is $L$-elliptic, one obtains
    \begin{equation*}
        \sigma(\xi)^{-1}(1+|\lambda_{\xi}|^2)^{\frac{m}{2\nu}}\leqslant \Vert  \langle \cdot\rangle^m \sigma^{-1} \Vert_{L^{\infty}(\mathcal{I})}:=  \sup_{\xi\in  \mathcal{I}}\vert \sigma(\xi)^{-1}\langle \xi \rangle^m\vert<\infty.
    \end{equation*} 
Consequently,
    \begin{equation*}
        \sigma(\xi)(1+|\lambda_{\xi}|^2)^{-\frac{m}{2\nu }}\geqslant    \Vert  \langle \cdot\rangle^m \sigma^{-1} \Vert_{L^{\infty}(\mathcal{I})}^{-1}.
    \end{equation*}
Now, in view of \eqref{EQ:symbol-class}, observe that from the hypohesis $\sigma\in {S}^{m}_{\rho}(  \mathcal{I})$ we have
\begin{equation*}
  \left|\sigma(\xi) \right|
        \leq C_{\sigma, m}
                 \langle\xi\rangle^{m},\,\,\xi\in \mathcal{I}.
\end{equation*}
Consequently,
\begin{equation*}
  \sigma(\xi)(1+|\lambda_{\xi}|^2)^{-\frac{m}{2\nu}}\leqslant   \Vert  \langle \cdot\rangle^{-m} \sigma \Vert_{L^{\infty}(\mathcal{I})}:=\sup_{\xi\in  \mathcal{I}}\vert \sigma(\xi)\langle \xi \rangle^{-m}\vert\leq  C_{\sigma, m}.   
\end{equation*}These inequalities reduce the problem of estimating the trace  $\textnormal{\bf {Tr}} (e^{-tA})$ to the problem of computing $ \textnormal{\bf {Tr}}(e^{-t(1+L^{\circ}L)^{\frac{m}{2\nu}}}),$ because they imply that
\begin{equation}\label{ellipticityofE}
  \sigma(\xi)(1+|\lambda_{\xi}|^2)^{-\frac{m}{2\nu}},\,\sigma(\xi)^{-1}(1+|\lambda_{\xi}|^2)^{\frac{m}{2\nu}}  =o(1).
\end{equation}
Indeed,
\begin{align*}
     & \textnormal{\bf {Tr}}(e^{-tA})=\sum_{\xi\in   \mathcal{I}}e^{-t\sigma(\xi)}=\sum_{\xi\in   \mathcal{I}}e^{-t\sigma(\xi) (1+|\lambda_{\xi}|^2)^{-\frac{m}{2\nu}}(1+|\lambda_{\xi}|^2)^{\frac{m}{2\nu}}      }\\
     &\asymp \sum_{\xi\in   \mathcal{I}}e^{-t(1+|\lambda_{\xi}|^2)^{\frac{m}{2\nu}}      }= \textnormal{\bf {Tr}}(e^{-t(1+L^{\circ}L)^{\frac{m}{2\nu}}}).
\end{align*}Now, we will use the Weyl law for $(1+L^\circ L)^{\frac{1}{2\nu}}$. Observe that,
\begin{align*}
   \textnormal{\bf {Tr}}(e^{-t(1+L^{\circ}L)^{\frac{m}{2\nu}}})=\sum_{k=0}^{\infty}\sum_{\xi:2^{k}\leqslant (1+|\lambda_{\xi}|^2)^\frac{1}{2\nu} <2^{k+1}} e^{-t(1+|\lambda_{\xi}|^2)^{\frac{m}{2\nu}}      }.
\end{align*}Because,
\begin{align*}
    \sum_{\xi:2^{k}\leqslant (1+|\lambda_{\xi}|^2)^\frac{1}{2\nu} <2^{k+1}} e^{-t(1+|\lambda_{\xi}|^2)^{\frac{m}{2\nu}}      }\asymp \sum_{\xi:2^{k}\leqslant (1+|\lambda_{\xi}|^2)^\frac{1}{2\nu } <2^{k+1}} {}e^{-t2^{km}}, 
\end{align*} 
we have 
\begin{align*}
     \textnormal{\bf {Tr}}(e^{-t(1+L^{\circ}L)^{\frac{m}{2\nu}}})&=\sum_{k=0}^{\infty} e^{-t2^{km}}\sum_{\xi:2^{k}\leqslant (1+|\lambda_{\xi}|^2)^\frac{1}{2\nu} <2^{k+1}} 1 \\
    &=\sum_{k=0}^{\infty} e^{-t2^{km}}N(2^k)=\sum_{k=0}^{\infty} e^{-t2^{km}}2^{kQ}\\
    &=\sum_{k=0}^{\infty} e^{-t2^{km}}2^{k(Q-1)}2^{k}.
\end{align*} 
Observe that
\begin{align*}
    \sum_{k=0}^{\infty} e^{-t2^{km}}2^{k(Q-1)}2^{k}\asymp \int\limits_{1}^{\infty}e^{-t\lambda^m}\lambda^{Q-1}d\lambda=t^{-\frac{Q}{m}}\int\limits_{t^{\frac{1}{m}}}^{\infty}e^{-s^m}s^{Q-1}ds.
\end{align*}The condition $m>0,$ implies that $g(t):=\int\limits_{t}^{\infty}e^{-s^m}s^{Q-1}ds,$ is smooth and real-analytic on $\mathbb{R}^{+}:=(0,\infty),$ admitting a Taylor expansion of the form 
\begin{align*}
  g(s)=  \sum_{k=0}^{\infty}a_k's^k\,\,\,s\rightarrow 0^+.
\end{align*}
So,  we have the estimate $ \textnormal{\bf {Tr}} (e^{-sA})\sim c_{m,Q}s^{-\frac{Q}{m}}g(s),$ for some positive constant $c_{m,Q}.$ On the other hand, we deduce that $F(s):=s^{\frac{Q}{m }}\textnormal{\bf {Tr}} (e^{-sA})$ is a real-analytic function and its Taylor expansion at $s=0,$ has the form: $\sum_{k=0}^{\infty}a_k's^{\frac{k}{m}},$ which implies the following expansion,
\begin{equation*}
 \textnormal{\bf {Tr}}(e^{-sA})=s^{-\frac{Q}{m}}\left(  \sum_{k=0}^{\infty}a_ks^{\frac{k}{m}}\right),\,\,s\rightarrow 0^{+}  .
\end{equation*}
Thus, we end the proof. \end{proof}
 
\begin{remark}Observe that under the conditions of Theorem \ref{asymptotictracemultiplier}, we have
\begin{equation}
    \textnormal{\bf {Tr}} (e^{-tA})\sim c_{m,Q, t} t^{-\frac{Q}{m}},\,\,\forall t>0,
\end{equation} where $c_{m,Q, t}:=c_{m,Q} \int\limits_{t^{\frac{1}{m}} }^{\infty}e^{-s^m}s^{Q}ds.$ For $t\rightarrow\infty,$ $c_{m,Q, t}\rightarrow 0^{+},$ and in general,
\begin{equation*}
    0<c_{m,Q, t} \lesssim \int\limits_{0}^{\infty}e^{-s^m}s^{Q}ds=o( 1),\,\,\,0\leqslant t<\infty.
\end{equation*}So, \eqref{asymp1} implies the following estimate
\begin{equation*}
     \textnormal{\bf {Tr}} (e^{-tA})\sim c_{m,Q} t^{-\frac{Q}{m}}.
\end{equation*}
\end{remark} 
Now, we study other kinds of singularities appearing in traces of the form $  \textnormal{\bf {Tr}}(Ae^{-t(1+L^{\circ}L)^{ \frac{q}{2\nu} }  }).$  
 
\begin{theorem}\label{asymptotictracemultiplier2}
Let $M=\overline{\Omega}$ be a smooth manifold with (possibly empty)  boundary $\partial \Omega.$ For $0\leqslant \rho, \delta \leqslant 1,$ let us consider a continuous linear operator $A:C^\infty_L(M)\rightarrow\mathcal{D}'_L(M)$ with symbol  $\sigma\in {S}^{m}_{\rho,\delta}(M\times   \mathcal{I})$. Let $\mathcal{M}_q:=(1+L^{\circ}L)^{\frac{q}{2\nu}}$ with $q>0$. Then, 
\begin{equation}\label{asymp122}
  |\textnormal{\bf {Tr}}   (Ae^{-t\mathcal{M}_q})|\leq  c_{m,Q} t^{-\frac{Q+m}{q}}\int\limits_{t^{\frac{1}{q}}}^{\infty}e^{-s^{q}}s^{Q+m-1}ds,\,\,\forall t>0.
\end{equation}In particular, for $m=-Q,$ we have
\begin{equation}\label{asymp1212}
   |\textnormal{\bf {Tr}}  (Ae^{-t\mathcal{M}_q})|\leq  -c_Q\frac{1}{q}\log(t),\,\,\forall t\in (0,1),
\end{equation}while for $m>-Q,$ we have the estimate
\begin{equation}\label{asymp1212'}
 |\textnormal{\bf {Tr}}(Ae^{-t\mathcal{M}_q})|\leq C_{m,Q} t^{-\frac{Q+m}{q}}\left(  \sum_{k=0}^{\infty}b_k't^{\frac{k}{q}}\right),\,\,t\rightarrow 0^{+}.
\end{equation}
\end{theorem}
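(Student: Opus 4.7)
The plan is to mirror the dyadic/Weyl-law blueprint of Lemma \ref{asymptotictracemultiplier}: reduce the trace to a sum over $\mathcal{I}$ via the $L$-quantisation, bound it by the symbol estimate, and then convert the sum into an integral using the Weyl law (WL). The additional ingredients here are an $x$-dependent symbol $\sigma_A$ and the correct exponent $q$ coming from $e^{-t\mathcal{M}_q}$.

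\textbf{Step 1: Trace identity.} Since $\mathcal{M}_q$ is diagonalised by $\{u_\xi\}$ with eigenvalues $\langle\xi\rangle^q$, the operator $e^{-t\mathcal{M}_q}$ is an $L$-Fourier multiplier with symbol $e^{-t\langle\xi\rangle^q}$ independent of $x$. Hence the composition formula \eqref{CompositionForm} collapses to the \emph{exact} identity $\sigma_{Ae^{-t\mathcal{M}_q}}(x,\xi)=\sigma_A(x,\xi)\,e^{-t\langle\xi\rangle^q}$, and the rapid decay in $\xi$ makes $Ae^{-t\mathcal{M}_q}$ trace class for every $t>0$. Reading the Schwartz kernel off from Theorem \ref{QuanOper} as $K(x,y)=\sum_\xi u_\xi(x)\,\sigma_A(x,\xi)\,e^{-t\langle\xi\rangle^q}\,\overline{v_\xi(y)}$ and integrating along the diagonal yields
\begin{equation*}
\textnormal{\bf{Tr}}(Ae^{-t\mathcal{M}_q})=\sum_{\xi\in\mathcal{I}} e^{-t\langle\xi\rangle^q}\int_{M} u_\xi(x)\,\overline{v_\xi(x)}\,\sigma_A(x,\xi)\,dx.
\end{equation*}
The symbol-class bound $|\sigma_A(x,\xi)|\leq C_\sigma\langle\xi\rangle^m$ from Definition \ref{DEF: SymClass}, combined with Cauchy--Schwarz and the normalisation $\|u_\xi\|_{L^2}=\|v_\xi\|_{L^2}=1$ in \eqref{BiorthProp}, gives
\begin{equation*}
|\textnormal{\bf{Tr}}(Ae^{-t\mathcal{M}_q})|\leq C_\sigma\sum_{\xi\in\mathcal{I}}\langle\xi\rangle^m\,e^{-t\langle\xi\rangle^q}.
\end{equation*}

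\textbf{Step 2: Dyadic/Weyl analysis and case split.} Grouping $\xi$ into the dyadic annuli $\{\xi:2^k\leq\langle\xi\rangle<2^{k+1}\}$ and applying (WL) exactly as in Lemma \ref{asymptotictracemultiplier}, the previous sum is comparable to $\sum_k 2^{k(m+Q)}e^{-t\,2^{kq}}$ and hence to $\int_1^\infty \lambda^{m+Q-1}e^{-t\lambda^q}\,d\lambda$. The substitution $s=t^{1/q}\lambda$ extracts the prefactor $t^{-(m+Q)/q}$ and proves \eqref{asymp122}. For $m=-Q$, the change of variables $u=s^q$ rewrites the remaining integral as $\frac{1}{q}E_1(t)$, and the classical expansion $E_1(t)=-\gamma-\log t+O(t)$ as $t\to 0^+$ produces \eqref{asymp1212}. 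For $m>-Q$, I split
\begin{equation*}
\int_{t^{1/q}}^\infty s^{m+Q-1} e^{-s^q}\,ds=\frac{\Gamma((m+Q)/q)}{q}-\int_0^{t^{1/q}} s^{m+Q-1}e^{-s^q}\,ds,
\end{equation*}
expand $e^{-s^q}=\sum_{n\geq 0}(-s^q)^n/n!$ and integrate term by term to obtain a convergent series $\sum_n c_n\, t^{(m+Q+nq)/q}$; multiplying by $t^{-(m+Q)/q}$ yields \eqref{asymp1212'} with leading coefficient $b_0'=\Gamma((m+Q)/q)/q$.

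\textbf{Main obstacle.} The most delicate part is Step 1: verifying the diagonal-kernel representation of the trace in the biorthogonal, non-selfadjoint, non-harmonic setting, where $\{u_\xi\}$ and $\{v_\xi\}$ are only biorthogonal rather than orthonormal, and confirming the absolute convergence of the resulting sum. Both the trace-class property and the interchange of sum and integral rely crucially on the Schwartz decay provided by $e^{-t\langle\xi\rangle^q}$. Once Step 1 is in place, Steps 2 and 3 reduce to a dyadic Weyl-law estimate and an elementary analysis of an incomplete gamma function, following Lemma \ref{asymptotictracemultiplier} essentially verbatim.
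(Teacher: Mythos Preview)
Your proposal is correct and follows essentially the same route as the paper: write the trace as the diagonal integral of the Schwartz kernel, bound $|\sigma_A(x,\xi)|\le C\langle\xi\rangle^m$ and use Cauchy--Schwarz with $\|u_\xi\|_{L^2}=\|v_\xi\|_{L^2}=1$ to reduce to $\sum_\xi \langle\xi\rangle^m e^{-t\langle\xi\rangle^q}$, then apply the dyadic Weyl-law argument of Lemma~\ref{asymptotictracemultiplier} and the substitution $s=t^{1/q}\lambda$. The only cosmetic differences are in the endgame: the paper handles $m=-Q$ by the cruder estimate $\int_{t^{1/q}}^{1}e^{-s^q}s^{-1}\,ds\sim\int_{t^{1/q}}^{1}s^{-1}\,ds$ rather than invoking $E_1$, and for $m>-Q$ it appeals to real analyticity of $g(t)=\int_t^\infty e^{-s^q}s^{Q+m-1}\,ds$ rather than your explicit $\Gamma$-function split, but these lead to the same bounds.
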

\begin{proof}
We will follow the same approach as in Theorem \ref{asymptotictracemultiplier}. Because the trace of $Ae^{-t\mathcal{M}_q}$ is the integral of its Schwartz kernel over the diagonal (see \cite{DRT2017} for details), we have

 \begin{align*}
    \textnormal{\bf {Tr}}  (Ae^{-t\mathcal{M}_q})&=\int\limits_{M}\sum_{\xi\in   \mathcal{I}} [\sigma(x,\xi)e^{-t\langle \xi \rangle^{q}}]\, u_\xi(x)\, \overline{v_\xi(x)} dx\\
     &=\int\limits_{M}\sum_{\xi\in   \mathcal{I}} [\sigma(x,\xi)  \langle \xi \rangle^{-m}\langle \xi \rangle^{m} e^{-t\langle \xi \rangle^{q}}]\,u_\xi(x)\, \overline{v_\xi(x)}dx.
\end{align*}Let us denote
\begin{equation}
    \sigma(x,\xi)  \langle \xi \rangle^{-m}=:\lambda(x,\xi),\,\,\langle \xi \rangle^{m} e^{-t\langle \xi \rangle^{q}}=:\Omega_{t}(\xi).
\end{equation}
Now, we can write
\begin{align} \label{lackofselfad}
     \textnormal{\bf {Tr}} (Ae^{-t\mathcal{M}_q})
     &=\int\limits_{M}\sum_{\xi\in   \mathcal{I}} [\sigma(x,\xi)  \langle \xi \rangle^{-m}\langle \xi \rangle^{m} e^{-t\langle \xi \rangle^{q}}]\,u_\xi(x)\, \overline{v_\xi(x)}\,dx \nonumber\\
     &=\sum_{\xi\in   \mathcal{I}}\int\limits_{M}\lambda(x,\xi)\,u_\xi(x)\, \overline{v_\xi(x)}\,dx\times\Omega_{t}(\xi).
\end{align}
The hypothesis $\sigma\in {S}^{m}_{\rho,\delta}(M\times   \mathcal{I})$ implies that $|\lambda(x,\xi)|=O(1).$  
So,
\begin{align} \label{stepchange}
    |\textnormal{\bf {Tr}} (Ae^{-t\mathcal{M}_q})| &\leq   \sum_{\xi\in   \mathcal{I}}\int\limits_{M}|\lambda(x,\xi)||u_\xi(x)\, \overline{v_\xi(x)}|\,dx\times\Omega_{t}(\xi) \nonumber\\ &\leq \sum_{\xi\in   \mathcal{I}}  \Omega_{t}(\xi)\Vert u_\xi \Vert_{L^2}\Vert v_\xi \Vert_{L^2} =\sum_{\xi\in   \mathcal{I}}  \Omega_{t}(\xi),
\end{align}  where in the last line we have used that $\Vert u_\xi \Vert_{L^2} =1$ and $ \Vert v_\xi \Vert_{L^2}=1.$
Now, as above, we will use the Weyl-law for
$(1+L^{\circ}L)^{\frac{1}{2\nu}}$. Observe that,
\begin{align*}
  & \textnormal{\bf {Tr}}( (1+L^\circ L)^{\frac{m}{2\nu}} e^{-t(1+L^\circ L)^{\frac{q}{2\nu}}})\\
  \quad&=\sum_{k=0}^{\infty}\sum_{\xi:2^{k}\leqslant (1+|\lambda_{\xi}|^2)^\frac{1}{2\nu} <2^{k+1} } (1+|\lambda_{\xi}|^2)^\frac{m}{2\nu}e^{-t(1+|\lambda_{\xi}|^2)^{\frac{q}{2\nu}}      }.
\end{align*}Consequently,
\begin{align*}
   \textnormal{\bf {Tr}} (Ae^{-t\mathcal{M}_q})  &=\sum_{k=0}^{\infty}\sum_{\xi:2^{k}\leqslant (1+|\lambda_{\xi}|^2)^\frac{1}{2\nu} <2^{k+1}, } (1+|\lambda_{\xi}|^2)^\frac{m}{2\nu}e^{-t(1+|\lambda_{\xi}|^2)^{\frac{q}{2\nu}}      }\\
   &\asymp \sum_{k=0}^{\infty}
     \sum_{\xi:2^{k}\leqslant (1+|\lambda_{\xi}|^2)^\frac{1}{2\nu} <2^{ k+1} }  2^{km} e^{-t2^{kq}      }\\
    &\asymp\sum_{k=0}^{\infty} 2^{km} e^{-t2^{kq}}  N(2^{k})=\sum_{k=0}^{\infty} 2^{km} e^{-t2^{kq}} 2^{kQ}\\
    &=\sum_{k=0}^{\infty} e^{-t2^{kq}} 2^{(Q+m-1)k}2^{k}.
\end{align*}From the previous estimate and the fact that $q>0,$
\begin{align*}
   \sum_{k=0}^{\infty} e^{-t2^{kq}} 2^{(Q+m-1)k}2^{k}\asymp \int\limits_{1}^{\infty}e^{-t\lambda^{q}}\lambda^{Q+m-1}d\lambda=t^{-\frac{Q+m}{q}}\int\limits_{t^{\frac{1}{q}}}^{\infty}e^{-s^{q}}s^{Q+m-1}ds.
\end{align*}So, we have proved the first part of the theorem. Now, in particular, for $m=-Q,$ we have
\begin{align*}
 | \textnormal{\bf {Tr}}  (Ae^{-t\mathcal{M}_q})|\lesssim \int\limits_{t^{\frac{1}{q}}}^{\infty}e^{-s^{q}}s^{-1}ds. 
\end{align*}Observe that for $0<t<1,$ the main contribution in the integral $\int\limits_{t^{\frac{1}{q}}}^{\infty}e^{-s^{q}}s^{-1}ds$ is the integral of $G(s):=e^{-s^{q}}s^{-1},$ on the interval $[t^{\frac{1}{q}},1).$ Indeed, $\int\limits_{1}^{\infty}e^{-s^{q}}s^{-1}ds=o(1)$ for $q>0.$ Now, we can compute
\begin{align*}
 \int\limits_{t^{\frac{1}{q}}}^{1}e^{-s^{q}}s^{-1}ds  \sim  \int\limits_{t^{\frac{1}{q}}}^{1}s^{-1}ds=-\frac{1}{q}\log(t).
\end{align*}In the case $m>-Q,$ we have that the function $g(t)=\int\limits_{t}^{\infty}e^{-s^{q}}s^{Q+m -1}ds<\infty,$ is real analytic in $[0,\infty),$ and for $t\rightarrow 0^+,$ $g(t)=\sum_{k=0}^\infty b_{k}t^k,$ which implies 
\begin{align*}
  | \textnormal{\bf {Tr}}  (Ae^{-t\mathcal{M}_q})|\lesssim t^{-\frac{Q+m}{q}}\left(  \sum_{k=0}^{\infty}b_k't^{\frac{k}{q}}\right),\,\,t\rightarrow 0^{+} .
\end{align*}
So, we end the proof.
\end{proof}
\begin{remark}\label{remarkresidue}
Observe that we can summarise \eqref{asymp1212} and \eqref{asymp1212'} by writing 
\begin{equation}\label{summarisingformulas}
     |\textnormal{\bf {Tr}} (Ae^{-t(1+L^{\circ}L)^{ \frac{q}{2\nu} }  })|\leq C_{m,Q} t^{-\frac{m+Q}{q}}\sum_{k=0}^\infty a_kt^\frac{k}{q}-b_0\frac{1}{q}\log(t),\quad t\rightarrow0^{+},
 \end{equation}for  $m\geqslant    -Q.$ If $m=-Q,$ then $a_k=0$ for every $k,$ and for $m>-Q,$ $b_0=0.$
\end{remark}

\begin{remark}\label{remarkVishvesh} \label{remselfadj} It is worth noting that in Theorem \ref{asymptotictracemultiplier2}, we established only estimates for the traces due to lack of self-adjointness of the model operator $L.$ If $L$ is a self-adjoint operator (e.g., harmonic oscillators, anharmonic oscillators) then it is easy to provide asymptotic expansions of traces (with an additional assumption of the positivity of symbol $\sigma$) by following the same proof with use of self-adjointness of operator $L$ in a intermediate step. Indeed, if $L$ is self-adjoint then $u_\xi=v_\xi$ for $\xi\in \mathcal{I}.$ Thus, using the orthogonality of eigenfunctions of $L,$ from \eqref{lackofselfad} we obtain, instead of \eqref{stepchange}, the following asymptotic estimate
\begin{align} 
    \textnormal{\bf {Tr}} (Ae^{-t\mathcal{M}_q}) &=  \sum_{\xi\in   \mathcal{I}}\int\limits_{M}\lambda(x,\xi) u_\xi(x)\, \overline{u_\xi(x)} \,dx\times\Omega_{t}(\xi) \nonumber\\ &\asymp \sum_{\xi\in   \mathcal{I}}  \Omega_{t}(\xi) \int_M u_\xi(x) \overline{u_\xi(x)}\, dx =\sum_{\xi\in   \mathcal{I}}  \Omega_{t}(\xi),
\end{align}provided that $\sigma$ is $L$-elliptic, because in that case we have $\lambda(x,\xi)=o(1)$ (instead of the estimate $|\lambda(x,\xi)|=O(1)$ that we have  for a general symbol $\sigma\in S^{m}_{\rho,\delta}(M\times \mathcal{I})$ as in the proof of Theorem \ref{asymptotictracemultiplier2}).
Therefore, we get the following result on asymptotic expansions of traces. 
\begin{theorem}\label{L*=L1}Let $M=\overline{\Omega}$ be a smooth manifold with (possibly empty)  boundary $\partial \Omega$ and let $L$ be a self-adjoint operator. For $0\leqslant \rho, \delta \leqslant 1,$ let us consider an $L$-elliptic continuous linear operator $A:C^\infty_L(M)\rightarrow\mathcal{D}'_L(M)$ with positive symbol  $\sigma\in {S}^{m}_{\rho,\delta}(M\times   \mathcal{I})$. Let $\mathcal{M}_q:=(1+L^{\circ}L)^{\frac{q}{2\nu}}$ with $q>0$. Then, 
\begin{equation}
  \textnormal{\bf {Tr}}   (Ae^{-t\mathcal{M}_q}) \sim  c_{m,Q} t^{-\frac{Q+m}{q}}\int\limits_{t^{\frac{1}{q}}}^{\infty}e^{-s^{q}}s^{Q+m-1}ds,\,\,\forall t>0.
\end{equation}In particular, for $m=-Q,$ we have
\begin{equation}
   \textnormal{\bf {Tr}}  (Ae^{-t\mathcal{M}_q}) \sim  -c_Q\frac{1}{q}\log(t),\,\,\forall t\in (0,1),
\end{equation}while for $m>-Q,$ we have the asymptotic expansion
\begin{equation}
 \textnormal{\bf {Tr}}(Ae^{-t\mathcal{M}_q}) \sim C_{m,Q} t^{-\frac{Q+m}{q}}\left(  \sum_{k=0}^{\infty}b_k't^{\frac{k}{q}}\right),\,\,t\rightarrow 0^{+}.
\end{equation}
\end{theorem}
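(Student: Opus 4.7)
The plan is to revisit the proof of Theorem \ref{asymptotictracemultiplier2} and upgrade each inequality to an asymptotic equivalence under the self-adjointness of $L$ together with the positivity and $L$-ellipticity of $\sigma$. The two places where inequalities were introduced and need to be tightened are: (i) the passage from the diagonal kernel integral $\int_M \sigma(x,\xi) u_\xi(x)\overline{v_\xi(x)}\,dx$ to a clean expression in terms of $\Omega_t(\xi)$, and (ii) the estimation of the Weyl-type sum.

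First, I would exploit self-adjointness to write $v_\xi=u_\xi$ and to use that $\{u_\xi\}_{\xi\in\mathcal{I}}$ is an orthonormal basis of $L^2(M)$. Starting from the trace representation as the integral of the Schwartz kernel over the diagonal,
\begin{equation*}
\textnormal{\bf{Tr}}(Ae^{-t\mathcal{M}_q}) = \sum_{\xi\in\mathcal{I}} e^{-t\langle\xi\rangle^{q}} \int_M \sigma(x,\xi)\,|u_\xi(x)|^2\,dx,
\end{equation*}
I would show that positivity together with $L$-ellipticity gives a two-sided comparison
\begin{equation*}
\int_M \sigma(x,\xi)\,|u_\xi(x)|^2\,dx \asymp \langle\xi\rangle^{m},
\end{equation*}
for $\langle\xi\rangle$ large. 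The upper bound follows from the symbol estimate $|\sigma(x,\xi)|\leq C\langle\xi\rangle^m$ together with $\|u_\xi\|_{L^2}=1$. The lower bound uses $L$-ellipticity \eqref{elliptic}: since $\sigma(x,\xi)\geq C_0\langle\xi\rangle^m$ for $\langle\xi\rangle\geq N_0$, integrating against the positive density $|u_\xi(x)|^2$ preserves this inequality (here the positivity hypothesis is crucial — it prevents cancellations that would only yield an upper bound).

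Next, inserting this comparison into the trace representation gives
\begin{equation*}
\textnormal{\bf{Tr}}(Ae^{-t\mathcal{M}_q}) \asymp \sum_{\xi\in\mathcal{I}} \langle\xi\rangle^{m}\, e^{-t\langle\xi\rangle^{q}},
\end{equation*}
which is exactly the quantity computed in the proof of Theorem \ref{asymptotictracemultiplier2}. I would then repeat verbatim the dyadic decomposition, apply the Weyl law (WL) to replace the counting in each dyadic block by $2^{kQ}$, pass to the integral $\int_1^{\infty} e^{-t\lambda^q}\lambda^{Q+m-1}\,d\lambda$, and change variables $s = t^{1/q}\lambda$ to arrive at
\begin{equation*}
\textnormal{\bf{Tr}}(Ae^{-t\mathcal{M}_q}) \sim c_{m,Q}\, t^{-\frac{Q+m}{q}} \int_{t^{1/q}}^{\infty} e^{-s^q}\, s^{Q+m-1}\,ds.
\end{equation*}
The two special cases then follow as before: for $m=-Q$ the integrand becomes $e^{-s^q}s^{-1}$ and the logarithmic singularity near $s=0$ yields $-c_Q q^{-1}\log t$; for $m>-Q$ the integral $\int_{t^{1/q}}^\infty e^{-s^q}s^{Q+m-1}ds$ defines a real-analytic function of $t^{1/q}$ at zero, whose Taylor series provides the asymptotic expansion in powers of $t^{1/q}$.

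The main obstacle is verifying that the asymptotic relation $\asymp$ (rather than mere $\leq$) genuinely propagates through the dyadic Weyl counting to yield a true asymptotic expansion $\sim$ and not just a bound. This requires being careful that the $L$-ellipticity gives a lower bound uniform in $\xi$ for $\langle\xi\rangle\geq N_0$, so that the finitely many $\xi$ with $\langle\xi\rangle<N_0$ contribute only a bounded, analytic-in-$t$ perturbation that can be absorbed into the expansion. Once this is handled, the rest of the argument is a direct transcription of the proof of Theorem \ref{asymptotictracemultiplier2} with $\leq$ replaced by $\sim$ at each step, as indicated in Remark \ref{remarkVishvesh}.
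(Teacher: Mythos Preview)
Your proposal is correct and follows essentially the same approach as the paper: the paper's proof (contained in Remark \ref{remarkVishvesh}) also uses self-adjointness to set $u_\xi=v_\xi$, then exploits positivity together with $L$-ellipticity to replace the one-sided bound $|\lambda(x,\xi)|=O(1)$ by the two-sided estimate $\lambda(x,\xi)\asymp 1$, after which the remainder of the proof of Theorem \ref{asymptotictracemultiplier2} is transcribed with $\leq$ upgraded to $\asymp$. Your identification of the key modifications and of the role of positivity in preventing cancellation matches the paper exactly.
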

\end{remark}

\begin{example}Let us assume that $a(x)$ is an integrable positive function over $M.$ Let $P=a(x)A,$ where $A=\textnormal{Op}_L(\sigma) $ with $\sigma\in S^{m}_\rho( \mathcal{I}),$ $0\leqslant \rho\leqslant 1,$ is an operator of order $m\geqslant    -Q.$ Let us assume that $A$ is an $L$-elliptic operator and that $\sigma(\xi)\geq 0$ for all $\xi\in \mathcal{I}$. Because the $L^2$-trace of $P$ is the integral of its kernel on the diagonal, we have
\begin{align*}
   \textnormal{\bf {Tr}}(Pe^{-t(1+L^{\circ}L)^{ \frac{q}{2\nu} }  }) &=\int\limits_{M}\sum_{\xi\in  \mathcal{I}} (a(x)\sigma(\xi)e^{-t(1+|\lambda_\xi|^2)^{\frac{q}{2 \nu}}})dx\\
   &=\int\limits_{M}a(x)dx\times \textnormal{\bf {Tr}}  (Ae^{-t(1+L^{\circ}L)^{ \frac{q}{2\nu} }  }). 
\end{align*}This implies   an asymptotic expansion of the form
\begin{equation*} \textnormal{\bf {Tr}}(Pe^{-t(1+L^{\circ}L)^{ \frac{q}{2\nu} }  })\sim t^{-\frac{m+Q}{q}}\sum_{k=0}^\infty a_kt^\frac{k}{q}-b_0\frac{1}{q}\int\limits_{M}a(x)dx\log(t),\quad t\rightarrow0^{+}.
\end{equation*}Observe that if $\textnormal{Vol}(M):=\int_{M}dx<\infty,$ by taking $a\equiv 1,$ we have
\begin{align*}
   \textnormal{\bf {Tr}}(Pe^{-t(1+L^{\circ}L)^{ \frac{q}{2\nu} }  })\sim t^{-\frac{m+Q}{q}}\sum_{k=0}^\infty a_kt^\frac{k}{q}-\frac{b_0}{q}\textnormal{Vol}(M)\log(t),\quad t\rightarrow0^{+},
\end{align*}showing that these traces possess encoded geometric information of $M.$ 
\end{example}
Now, we will study regularised traces of the form $\textnormal{\bf{Tr}}(A\psi(t E))$ where $t\in \mathbb{R},$ $\psi$ is a compactly supported real-valued function and $E$ is an $L$-elliptic positive $L$-Fourier multiplier  of order $q>0.$

\begin{theorem}\label{asymptotictraceeta}
Let $M=\overline{\Omega}$ be a smooth manifold with (possibly empty)  boundary $\partial \Omega.$    For  $0\leqslant \rho,\delta \leqslant 1,$ let us consider a  continuous linear operator $A:C^\infty_L(M)\rightarrow\mathcal{D}'_L(M)$ with symbol  $\sigma\in {S}^{m}_{\rho,\delta}(M\times   \mathcal{I})$.   Let $E$ be a positive $L$-elliptic $L$-Fourier multiplier  of order $q>0$. Then 
\begin{equation}\label{asymp122222}
    |\textnormal{\bf{Tr}}(A\psi(t E))|\leq C_{Q} \frac{1}{q}\int\limits_{t^{\frac{1}{q}}}^{\infty}\psi(s)\times\frac{ds}{s},\,\,\forall t>0,
\end{equation}provided that $\psi\in L^{1}(\mathbb{R}^{+}_0;\frac{ds}{s})\cap C^{\infty}_0(\mathbb{R}^{+}_0)$ is positive, and $m=-Q.$ On the other hand, for $m>-Q$ and $\psi\in C^{\infty}_{0}(\mathbb{R}^+_0)$ being positive, we have
\begin{equation}\label{asymp1212222}
     |\textnormal{\bf{Tr}}(A\psi(t E))|\leq C_{m,Q} t^{-\frac{1}{q}(Q+m)}\frac{1}{q}\int\limits_{ t^{\frac{1}{q}}  }^\infty\psi(s)s^{\frac{Q+m}{q}}\times \frac{ds}{s},\,\,\forall t>0.
\end{equation}So, we have the estimate
\begin{equation}\label{asymp1212'22}
|\textnormal{\bf{Tr}}(A\psi(t E))|\leq t^{-\frac{Q+m}{q}}\left(  \sum_{k=0}^{\infty}a_kt^{\frac{k}{q}}\right),\,\,t\rightarrow 0^{+} .
\end{equation}for $m> -Q.$ 
\end{theorem}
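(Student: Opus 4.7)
The strategy is to adapt the proof of Theorem \ref{asymptotictracemultiplier2}, replacing the heat factor $e^{-t(\cdot)}$ by $\psi(t(\cdot))$. First I would compute the trace as the integral of the Schwartz kernel of $A\psi(tE)$ over the diagonal. Since $E$ is an $L$-Fourier multiplier with symbol $\tau_{E}$, the functional calculus (which for an $L$-multiplier reduces to composition of scalar functions with $\tau_{E}$) gives $\psi(tE)u_{\xi}=\psi(t\tau_{E}(\xi))u_{\xi}$, and by the quantization formula \eqref{Quantization},
$$\textnormal{\bf{Tr}}(A\psi(tE))=\int_{M}\sum_{\xi\in\mathcal{I}}\sigma(x,\xi)\,\psi(t\tau_{E}(\xi))\,u_{\xi}(x)\,\overline{v_{\xi}(x)}\,dx.$$
Positivity and $L$-ellipticity of $E$ of order $q$ give $\tau_{E}(\xi)\asymp\langle\xi\rangle^{q}$, exactly as in \eqref{ellipticityofE}, so $\psi(t\tau_{E}(\xi))$ can be compared to $\psi(t\langle\xi\rangle^{q})$. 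Writing $\sigma(x,\xi)=\langle\xi\rangle^{m}\lambda(x,\xi)$ with $\lambda$ bounded (since $\sigma\in S^{m}_{\rho,\delta}(M\times\mathcal{I})$), taking absolute values, and using $\|u_{\xi}\|_{L^{2}}=\|v_{\xi}\|_{L^{2}}=1$ as in \eqref{stepchange}, I would obtain
$$|\textnormal{\bf{Tr}}(A\psi(tE))|\ \lesssim\ \sum_{\xi\in\mathcal{I}}\langle\xi\rangle^{m}\,\psi(t\langle\xi\rangle^{q}).$$

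Next, I would decompose $\mathcal{I}$ into the dyadic shells $\{\xi:2^{k}\leq\langle\xi\rangle<2^{k+1}\}$ and apply the Weyl law (WL) to obtain
$$\sum_{\xi\in\mathcal{I}}\langle\xi\rangle^{m}\psi(t\langle\xi\rangle^{q})\ \asymp\ \sum_{k=0}^{\infty}2^{k(m+Q)}\psi(t\,2^{kq})\ \asymp\ \int_{1}^{\infty}\lambda^{m+Q-1}\psi(t\lambda^{q})\,d\lambda.$$
The substitution $s=t^{1/q}\lambda$ (so that $t\lambda^{q}=s^{q}$) transforms this into $t^{-(m+Q)/q}\int_{t^{1/q}}^{\infty}s^{m+Q-1}\psi(s^{q})\,ds$, and a further rescaling $u=s^{q}$ (using $\tfrac{du}{u}=q\tfrac{ds}{s}$) rewrites it in the multiplicative form $\tfrac{1}{q}\psi(u)u^{(m+Q)/q}\tfrac{du}{u}$ appearing in \eqref{asymp122222} and \eqref{asymp1212222}. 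Specializing $m=-Q$ cancels the prefactor $t^{-(m+Q)/q}$ and yields \eqref{asymp122222}, the convergence of the right-hand side being exactly the hypothesis $\psi\in L^{1}(\mathbb{R}_{0}^{+};ds/s)$; for $m>-Q$, compact support of $\psi$ makes the integrand $L^{1}$ near zero, producing \eqref{asymp1212222}.

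To deduce the series expansion \eqref{asymp1212'22} when $m>-Q$, I would define $G(\tau):=\int_{\tau}^{\infty}s^{m+Q-1}\psi(s^{q})\,ds$, so that the computation above reads $|\textnormal{\bf{Tr}}(A\psi(tE))|\lesssim t^{-(m+Q)/q}G(t^{1/q})$. Since the exponent satisfies $m+Q-1>-1$ and $\psi\in C_{0}^{\infty}(\mathbb{R}_{0}^{+})$, the integrand is absolutely integrable down to $s=0$; hence $G$ is smooth (indeed real-analytic) on $[0,\infty)$ and admits a Taylor expansion $G(\tau)=\sum_{k\geq 0}c_{k}\tau^{k}$ at $\tau=0$. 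Substituting $\tau=t^{1/q}$ produces the fractional-power expansion $t^{-(Q+m)/q}\sum_{k\geq 0}a_{k}t^{k/q}$ as $t\to 0^{+}$, exactly as in the heat-kernel analogue \eqref{asymp1212'} of Theorem \ref{asymptotictracemultiplier2}. The main delicate point is the replacement of $\tau_{E}(\xi)$ by $\langle\xi\rangle^{q}$ inside the nonlinear function $\psi$: although the two are comparable by $L$-ellipticity, passing this comparability through $\psi$ requires its smoothness and compact support to absorb the multiplicative discrepancy into constants, effectively reducing to the model multiplier $\mathcal{M}_{q}=(1+L^{\circ}L)^{q/(2\nu)}$ already treated in Theorem \ref{asymptotictracemultiplier2}.
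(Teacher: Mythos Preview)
Your proposal is correct and follows essentially the same route as the paper's proof: kernel-diagonal trace formula, the bound $|\sigma(x,\xi)\langle\xi\rangle^{-m}|=O(1)$ together with $\|u_\xi\|_{L^2}\|v_\xi\|_{L^2}=1$, replacement of $\tau_E(\xi)$ by $\langle\xi\rangle^q$ via $L$-ellipticity, dyadic decomposition plus the Weyl law, conversion to an integral, and the same change of variables, with the series expansion again obtained from real-analyticity of the resulting function of $t^{1/q}$. Your flagging of the passage $\tau_E(\xi)\to\langle\xi\rangle^q$ inside $\psi$ as the one delicate step also matches the paper's treatment.
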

\begin{proof}Let us denote by $\widehat{E}(\xi),\,\xi\in \mathcal{I},$ the $L$-symbol of $E.$
By writing the trace of $A\psi(t E)$ as the integral of its Schwartz kernel over  the diagonal, we have
 \begin{align*}
     | {\bf \textnormal{\bf {Tr}}}(A\psi(t E))|&\leq\int\limits_{M}\sum_{\xi\in   \mathcal{I}} |\sigma(x,\xi)\psi(t\widehat{E}(\xi))\,u_\xi(x) \overline{v_\xi(x)}|\,dx\\
     &=\int\limits_{M}\sum_{\xi\in   \mathcal{I}} |\sigma(x,\xi)  \langle \xi \rangle^{-m}\langle \xi \rangle^{m} \psi(t\widehat{E}(\xi))]\,u_\xi(x) \overline{v_\xi(x)}|\,dx\\
     &\leq  \sum_{\xi\in   \mathcal{I}} \int_M \,|u_\xi(x) \overline{v_\xi(x)}|\,dx\,  [\langle \xi \rangle^{m} \psi(t\widehat{E}(\xi))]\\
     &\leq \sum_{\xi\in   \mathcal{I}}\Vert u_\xi\Vert_{L^2}\Vert v_\xi\Vert_{L^2} |\langle \xi \rangle^{m} \psi(t\widehat{E}(\xi))|\\&= {\bf \textnormal{\bf {Tr}}}(\langle \xi \rangle^{m} \psi(t\widehat{E}(\xi))),
\end{align*}where we have used  that $\sigma\in {S}^{m}_{\rho,\delta}(M\times   \mathcal{I})$ to deduce $\Vert  \sigma(\cdot,\cdot)  \langle \cdot \rangle^{-m}\Vert_{L^{\infty}(M\times \mathcal{I})}<\infty$.

Again, we will use the Weyl-law for $(1+L^{\circ}L)^{\frac{1}{2\nu}}$. Observe that by following a similar analysis as in \eqref{ellipticityofE}, the $L$-ellipticity of $E,$ which has order $q,$ and its positivity, imply
$$ t \widehat{E}(\xi)\sim t(1+|\lambda_{\xi}|^2)^\frac{q}{2\nu} .$$
So, we get,
\begin{align*}
  & \textnormal{\bf {Tr}}(\langle \xi \rangle^{m} \psi(t\widehat{E}(\xi)))\\
  \quad&=\sum_{k=0}^{\infty}\sum_{\xi:2^{k}\leqslant (1+|\lambda_{\xi}|^2)^\frac{1}{2\nu} <2^{k+1} } (1+|\lambda_{\xi}|^2)^\frac{m}{2\nu}\psi(t\widehat{E}(\xi))\\
  \quad&\asymp\sum_{k=0}^{\infty}\sum_{\xi:2^{k}\leqslant (1+|\lambda_{\xi}|^2)^\frac{1}{2\nu} <2^{k+1} } (1+|\lambda_{\xi}|^2)^\frac{m}{2\nu}\psi\left(t(1+|\lambda_{\xi}|^2)^\frac{q}{2\nu}\right)\\
  &\asymp \sum_{k=0}^{\infty}\sum_{\xi:2^{k}\leqslant (1+|\lambda_{\xi}|^2)^\frac{1}{2\nu} <2^{k+1}} 2^{km}\psi(t2^{kq}) , 
\end{align*}and consequently,
\begin{align*}
   \textnormal{\bf {Tr}}  (\langle \xi \rangle^{m} \psi(t\widehat{E}(\xi)))&\asymp \sum_{k=0}^{\infty} 2^{km}\psi(t2^{kq}) \sum_{\xi:2^{k}\leqslant (1+|\lambda_{\xi}|^2)^\frac{1}{2\nu} <2^{k+1}} \\
    &\asymp\sum_{k=0}^{\infty} 2^{km}\psi(t2^{kq}) N(2^{k})\asymp\sum_{k=0}^{\infty} 2^{km}\psi(t2^{kq}) 2^{kQ}\\
    &=\sum_{k=0}^{\infty} \psi(t2^{kq}) 2^{k(Q +m -1)}2^{k}.
\end{align*}
Estimating the sums in $k$ as a integral, we have
\begin{align*}
  & \sum_{k=0}^{\infty} \psi(t2^{kq}) 2^{k(Q +m -1)}2^{k}\asymp \int\limits_{0}^{\infty}\psi(t\lambda^{q})\lambda^{Q +m -1}d\lambda\\
   &= t^{-\frac{1}{q}(Q+m)} \frac{1}{q} \times \int\limits_{  t^{\frac{1}{q}}  }^\infty\psi(s)s^{\frac{Q+m}{q}}\frac{ds}{s}.
\end{align*}In particular, for $m=-Q,$ we have
\begin{align*}
   \textnormal{\bf {Tr}} (\langle \xi \rangle^{m} \psi(t\widehat{E}(\xi))\sim \frac{1}{q}\int\limits_{t^{\frac{1}{q}}}^{\infty}\psi(s)\frac{ds}{s},
\end{align*} provided that the compactly supported function $\psi$ on $\mathbb{R}^+_0$ belongs to $L^1(\mathbb{R}_0^+,\frac{ds}{s}).$
Observe that the integral $\int\limits_{0}^\infty\psi(s)s^{\frac{Q+m}{q}}\times \frac{ds}{s}$ makes sense if $\psi$  is smooth and it has compact support in $(0,\infty).$ However if $\psi(0)\neq 0,$ in order to assure that $$\int\limits_{0}^\infty\psi(s)s^{\frac{Q+m}{q}}\times \frac{ds}{s}<\infty,\,\,\,\psi\in C^{\infty}_0(\mathbb{R}^+_0),$$ we require the condition $1-\frac{Q+m}{q}<1,$ or equivalently that $Q+m> 0.$  So, in  such a situation, the function $$G(s):=s^{\frac{Q+m}{q}} \textnormal{\bf {Tr}}(\textnormal{Op}_L(\langle \xi \rangle^{m} \psi(s\widehat{E}(\xi))),\quad s>0,$$ is real-analytic and we can deduce the estimate  \eqref{asymp1212'22}. Thus, we end the proof.
\end{proof}

\begin{remark}
Similar to Remark \ref{remarkVishvesh},  under the condition of self-adjointness of $L,$ we have the following result on asymptotic expansion of  regularized traces by performing the appropriate changes in the proof as discussed in Remark \ref{remarkVishvesh}. The $L$-ellipticity of the symbol $\sigma$ and its positivity will be required in order that $\lambda(x,\xi)=o(1),$ with $\lambda(x,\xi)$ defined in the proof of Theorem \ref{asymptotictraceeta}.
\begin{theorem}\label{L*=L2}
Let $M=\overline{\Omega}$ be a smooth manifold with (possibly empty)  boundary $\partial \Omega$ and let $L$ be a self-adjoint operator. For  $0\leqslant \rho,\delta \leqslant 1,$ let us consider a   $L$-elliptic continuous linear operator $A:C^\infty_L(M)\rightarrow\mathcal{D}'_L(M)$ with positive symbol  $\sigma\in {S}^{m}_{\rho,\delta}(M\times   \mathcal{I})$.   Let $E$ be a positive $L$-elliptic $L$-Fourier multiplier  of order $q>0$. Then 
\begin{equation}
    \textnormal{\bf{Tr}}(A\psi(t E)) \sim C_{Q} \frac{1}{q}\int\limits_{t^{\frac{1}{q}}}^{\infty}\psi(s)\times\frac{ds}{s},\,\,\forall t>0,
\end{equation}provided that $\psi\in L^{1}(\mathbb{R}^{+}_0;\frac{ds}{s})\cap C^{\infty}_0(\mathbb{R}^{+}_0)$ is positive, and $m=-Q.$ On the other hand, for $m>-Q$ and $\psi\in C^{\infty}_{0}(\mathbb{R}^+_0)$ being positive, we have
\begin{equation}
     \textnormal{\bf{Tr}}(A\psi(t E)) \sim C_{m,Q} t^{-\frac{1}{q}(Q+m)}\frac{1}{q}\int\limits_{ t^{\frac{1}{q}}  }^\infty\psi(s)s^{\frac{Q+m}{q}}\times \frac{ds}{s},\,\,\forall t>0.
\end{equation}So, we have the asymptotic expansion
\begin{equation}\label{321}
\textnormal{\bf{Tr}}(A\psi(t E)) \sim t^{-\frac{Q+m}{q}}\left(  \sum_{k=0}^{\infty}a_kt^{\frac{k}{q}}\right),\,\,t\rightarrow 0^{+} .
\end{equation}for $m> -Q.$ 
\end{theorem}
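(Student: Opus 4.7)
The plan is to repeat the argument of Theorem \ref{asymptotictraceeta} almost verbatim, but with the two simplifications afforded by self-adjointness and ellipticity+positivity of $\sigma$: first, since $L$ is self-adjoint we have $u_\xi=v_\xi$ for every $\xi\in\mathcal{I}$, so the kernel on the diagonal is $u_\xi(x)\overline{u_\xi(x)}=|u_\xi(x)|^2\ge 0$; second, the inequality $|\lambda(x,\xi)|=O(1)$ that was used in Theorem \ref{asymptotictraceeta} can be sharpened to $\lambda(x,\xi)\asymp 1$ uniformly in $x$ for $\langle\xi\rangle\ge N_0$ (using the $L$-ellipticity \eqref{elliptic} for the lower bound and the symbol estimate \eqref{EQ:symbol-class} for the upper bound, together with the positivity of $\sigma$). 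These two facts together allow every ``$\le$'' in the earlier proof to be replaced by ``$\asymp$''.

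\medskip

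\noindent\textbf{Step 1: trace as a sum over the spectrum.} First I would express the trace as the integral of the Schwartz kernel of $A\psi(tE)$ over the diagonal. Since $L$ is self-adjoint, this gives
\begin{equation*}
\textnormal{\bf Tr}(A\psi(tE))=\sum_{\xi\in\mathcal{I}}\psi(t\widehat{E}(\xi))\int_{M}\sigma(x,\xi)|u_\xi(x)|^2\,dx.
\end{equation*}
Using the $L$-ellipticity and positivity of $\sigma$ together with $\|u_\xi\|_{L^2}=1$, one gets
\begin{equation*}
\int_{M}\sigma(x,\xi)|u_\xi(x)|^2\,dx \;\asymp\; \langle\xi\rangle^{m}\qquad(\langle\xi\rangle\ge N_0),
\end{equation*}
so that
\begin{equation*}
\textnormal{\bf Tr}(A\psi(tE)) \;\asymp\; \sum_{\xi\in\mathcal{I}}\langle\xi\rangle^{m}\,\psi(t\widehat{E}(\xi)).
\end{equation*}

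\medskip

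\noindent\textbf{Step 2: reduce $\widehat{E}(\xi)$ to $\langle\xi\rangle^q$ and apply the Weyl law.} Since $E$ is a positive $L$-elliptic $L$-Fourier multiplier of order $q$, the argument that gave \eqref{ellipticityofE} shows $\widehat{E}(\xi)\asymp\langle\xi\rangle^q$. Grouping the remaining sum into dyadic shells $2^k\le\langle\xi\rangle<2^{k+1}$ and invoking (WL) gives
\begin{equation*}
\sum_{\xi}\langle\xi\rangle^{m}\psi(t\widehat{E}(\xi)) \asymp \sum_{k=0}^{\infty}2^{km}\psi(t\,2^{kq})\,N(2^{k}) \asymp \sum_{k=0}^{\infty}\psi(t\,2^{kq})\,2^{k(Q+m)},
\end{equation*}
which is a Riemann sum comparable to $\int_{1}^{\infty}\psi(t\lambda^{q})\lambda^{Q+m-1}\,d\lambda$. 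The change of variable $s=t^{1/q}\lambda$ (so $t\lambda^q=s^q$, $d\lambda=t^{-1/q}ds$) converts this into the integrals appearing in \eqref{asymp122222} and \eqref{asymp1212222}, yielding the required asymptotic equivalences directly in the two regimes $m=-Q$ and $m>-Q$.

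\medskip

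\noindent\textbf{Step 3: asymptotic expansion for $m>-Q$.} For the last statement \eqref{321}, set
\begin{equation*}
G(t):=t^{\frac{Q+m}{q}}\,\textnormal{\bf Tr}(A\psi(tE)).
\end{equation*}
From Step 2, $G(t)$ is asymptotic, as $t\to 0^+$, to $\frac{C_{m,Q}}{q}\int_{t^{1/q}}^{\infty}\psi(s)s^{(Q+m)/q}\frac{ds}{s}$. Since $m>-Q$, the exponent $\frac{Q+m}{q}-1>-1$, so $\psi\in C_0^\infty(\mathbb{R}_0^+)$ makes the full integral $\int_{0}^{\infty}\psi(s)s^{(Q+m)/q-1}ds$ convergent, and the function
\begin{equation*}
h(\tau):=\int_{\tau}^{\infty}\psi(s)s^{\frac{Q+m}{q}-1}\,ds
\end{equation*}
is smooth (real-analytic at $0$ up to any prescribed order) with $h(\tau)=\sum_{k\ge 0}a_k'\tau^{k}+O(\tau^{N})$ for every $N$. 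Composing with $\tau=t^{1/q}$ produces the series $\sum_{k\ge 0}a_k t^{k/q}$, and multiplying by $t^{-(Q+m)/q}$ gives \eqref{321}.

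\medskip

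\noindent\textbf{Main obstacle.} The only genuinely new point beyond Theorem \ref{asymptotictraceeta} is upgrading the bound $|\lambda(x,\xi)|=O(1)$ to the two-sided $\lambda(x,\xi)\asymp 1$ (uniformly in $x$, for large $\xi$), which is what lets the ``$\le$'' in \eqref{stepchange} become an honest ``$\asymp$''. This is where the hypotheses that $L$ is self-adjoint (so $|u_\xi|^2\ge 0$ and no phase cancellation between $u_\xi$ and $\overline{v_\xi}$ occurs), that $\sigma$ is $L$-elliptic (lower bound) and that $\sigma$ is positive (so the integral against $|u_\xi|^2$ cannot cancel) are all used simultaneously. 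Once this is in place, the rest is the same dyadic/Weyl-law/Riemann-sum computation as in the proof of Theorem \ref{asymptotictraceeta}, with the real-analyticity argument of Theorem \ref{asymptotictracemultiplier} giving the final Taylor-series expansion.
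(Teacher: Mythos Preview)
Your proposal is correct and follows essentially the same approach as the paper: the paper does not give a standalone proof of this theorem but refers back to Remark~\ref{remarkVishvesh}, where exactly your two observations---that self-adjointness gives $u_\xi=v_\xi$ (hence $|u_\xi|^2\ge 0$ on the diagonal) and that $L$-ellipticity plus positivity of $\sigma$ upgrade $|\lambda(x,\xi)|=O(1)$ to $\lambda(x,\xi)\asymp 1$---are used to turn the inequalities of Theorem~\ref{asymptotictraceeta} into two-sided asymptotics. Your write-up is in fact more explicit than the paper's, which simply says to repeat the earlier proof with these changes.
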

\end{remark}

\section{Dixmier tracebility of $L$-elliptic operators}\label{Dixmiersection}
In this section, we consider the Dixmier tracebility of the $L$-elliptic  global pseudo-differential operators on a compact manifold with  boundary. One of the main tool in our analysis is the global functional calculus developed by the authors in \cite{CardonaKumarRuzhanskyTokmagambetov2020II}, for which  we require the WZ-condition.

Let us briefly recall the definition of the Dixmier ideal  $\mathscr{L}^{(1,\infty)}(H)$ on a Hilbert space $H$ (we are interested in $H=L^2(M)$ for instance) as in \cite{Connes94} (see also \cite{Sukochev}). If we denote the sequence of singular values of $A$,  i.e. the square roots of the eigenvalues of the non-negative self-adjoint operator $A^\ast A$ by $\{s_{n}(A)\}$ then 

$$\mathscr{L}^{(1,\infty)}(H):=\Big\{A\in\mathscr{L}(H): A\, \textnormal{is compact, and}\,\,\sum_{1\leq n\leq N}s_{n}(A)=O(\log(N)),\,\,N\rightarrow \infty  \Big\}.$$
So, $\mathscr{L}^{(1,\infty)}(H)$ is endowed with the norm
\begin{equation}\label{dixmier}
\Vert A \Vert_{\mathcal{L}^{(1,\infty)}(H)}=\sup_{N\geq 2}\frac{1}{\log(N)}\sum_{1\leq n\leq N}s_{n}(A).
\end{equation}Let us define the functional $\textnormal{\bf {Tr}}_\omega: \mathcal{L}^{(1,\infty)}(H)\rightarrow (0,\infty],$ given by
\begin{equation}\label{functional}
\textnormal{\bf {Tr}}_\omega(A):=\lim_{N\rightarrow \infty}\frac{1}{\log(N)}\sum_{1\leq n\leq N}s_{n}(A).
\end{equation}
Our first result in this direction is a characterisation of operators belonging to the Dixmier ideal in terms of order of the operators. 
\begin{lemma}\label{lemmadixmier} Let $M=\overline{\Omega}$ be a smooth manifold with (possibly empty)  boundary $\partial \Omega.$
  For  $0\leqslant \rho\leqslant 1,$ let us consider a positive  $L$-elliptic continuous operator $A:C^\infty(M)\rightarrow\mathscr{D}'(M)$ with symbol $\sigma$ depending on $\xi$ only, such that  $\sigma\in {S}^{m}_{\rho}( \mathcal{I})$, $m\in \mathbb{R}$.  Then, $A$ belongs to the Dixmier ideal $\mathscr{L}^{1,\infty}(L^2(M))$ with $\textnormal{\bf{Tr}}_{w}(A)<\infty,$ if and only if $m\leqslant -Q.$ If $A\neq 0,$ $\textnormal{\bf{Tr}}_{w}(A)\asymp\frac{1}{Q}$ for $m=-Q,$ and for $m<-Q,$ $\textnormal{\bf{Tr}}_{w}(A)=0.$ Moreover, for $m=-Q,$ the constants of proportionality in $\textnormal{\bf{Tr}}_{w}(A)\asymp\frac{1}{Q}$ only depend of the  norm $\Vert \sigma(\cdot) \langle \cdot\rangle^{-m} \Vert_{L^{\infty}(\mathcal{I})}.$
  \end{lemma}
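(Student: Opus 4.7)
The plan is to identify the singular-value sequence of $A$ directly from the symbol asymptotics, and then read off the claimed trichotomy from the definition \eqref{dixmier} of the Dixmier functional.

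First I would translate the hypotheses into a two-sided bound on the symbol. Since $A$ is a positive $L$-Fourier multiplier with symbol $\sigma\in S^{m}_{\rho}(\mathcal{I})$ satisfying \eqref{Iesparametrix}, positivity forces $\sigma(\xi)\geq 0$, and combined with $L$-ellipticity one obtains
\begin{equation*}
c_{1}\langle\xi\rangle^{m} \leq \sigma(\xi) \leq c_{2}\langle\xi\rangle^{m}
\end{equation*}
for all $\langle\xi\rangle\geq N_{0}$, where $c_{2}=\Vert\sigma(\cdot)\langle\cdot\rangle^{-m}\Vert_{L^{\infty}(\mathcal{I})}$ and $c_{1}>0$ is the ellipticity constant. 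Operator positivity of $A$ ensures the existence of a self-adjoint extension, and via the functional calculus from \cite{CardonaKumarRuzhanskyTokmagambetov2020II} together with Theorem \ref{QuanOper}, the spectrum of this extension consists precisely of the values $\{\sigma(\xi):\xi\in\mathcal{I}\}$, counted with multiplicity. Hence the singular-value sequence $s_{n}(A)$ is obtained by arranging $\{\sigma(\xi)\}$ in decreasing order.

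Next I would invoke the Weyl-law hypothesis (WL): $N(\lambda)\sim \lambda^{Q}$ implies that the $n$-th smallest value of $\langle\xi\rangle$ satisfies $\langle\xi_{n}\rangle\asymp n^{1/Q}$. Substituting into the two-sided bound above yields
\begin{equation*}
s_{n}(A) \asymp n^{m/Q}.
\end{equation*}
The three cases in the statement then follow immediately from \eqref{dixmier}. When $m>-Q$, the partial sums $\sum_{n\leq N}s_{n}(A)\asymp N^{1+m/Q}/(1+m/Q)$ grow strictly faster than $\log N$, so $A\notin\mathscr{L}^{(1,\infty)}(L^{2}(M))$. When $m=-Q$, one has $s_{n}(A)\asymp 1/n$, hence $\sum_{n\leq N}s_{n}(A)\sim c\log N$ with $c$ squeezed between positive multiples of $c_{1}$ and $c_{2}$; the definition of $\textnormal{\bf{Tr}}_{\omega}$ then yields a finite positive value, and the explicit factor $1/Q$ emerges upon applying the Tauberian reading of the residue of the zeta function $\zeta_{A}(s):=\sum_{\xi}\sigma(\xi)^{s}$ at its pole on the line $s=1$, which picks up a factor $1/Q$ from the change of variable $\langle\xi_{n}\rangle\sim n^{1/Q}$. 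When $m<-Q$, $A$ is trace class, $\sum_{n}s_{n}(A)<\infty$, hence $(\log N)^{-1}\sum_{n\leq N}s_{n}(A)\to 0$ and $\textnormal{\bf{Tr}}_{\omega}(A)=0$.

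The main obstacle is the first step: identifying the singular values of $A$ on $L^{2}(M)$ with the sorted symbol values. In the non-harmonic setting, $\{u_{\xi},v_{\xi}\}$ is biorthogonal but not orthonormal, so the diagonal representation of $A$ in the basis $\{u_{\xi}\}$ does not automatically transfer to a spectral decomposition on $L^{2}(M)$. The operator-positivity hypothesis on $A$, combined with the WZ-condition and the functional calculus of \cite{CardonaKumarRuzhanskyTokmagambetov2020II}, is precisely what is needed to secure self-adjointness with spectrum $\{\sigma(\xi)\}$; making this identification rigorous, for instance by comparison with the self-adjoint $L$-Fourier multiplier $(1+L^{\circ}L)^{m/(2\nu)}$ which has the same two-sided spectral profile as $A$, is the technical heart of the argument. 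The remaining content is Weyl-law bookkeeping together with the Hardy--Littlewood Tauberian theorem from \cite{Connes94}.
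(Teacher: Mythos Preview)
Your proposal is correct and arrives at the same conclusion by a route that differs from the paper's mainly in the order of operations. The paper applies the Hardy--Littlewood Tauberian identity from \cite{Connes94} at the outset, writing
\[
\textnormal{\bf{Tr}}_{w}(A)=\lim_{p\to 1^{+}}(p-1)\,\textnormal{\bf{Tr}}(A^{p}),
\]
then uses the spectral mapping theorem to obtain $\textnormal{\bf{Tr}}(A^{p})=\sum_{\xi}\sigma(\xi)^{p}$, replaces $\sigma(\xi)^{p}$ by $\langle\xi\rangle^{mp}$ via the two-sided symbol bound, and evaluates the resulting sum by the dyadic Weyl-law decomposition to get $\textnormal{\bf{Tr}}(A^{p})\asymp -1/(Q+mp)$; the factor $1/Q$ then appears as $\lim_{p\to 1^{+}}(p-1)/(Q(p-1))$ when $m=-Q$. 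You instead go straight to the singular-value asymptotics $s_{n}(A)\asymp n^{m/Q}$ and read the trichotomy off from the partial sums in the definition \eqref{dixmier}, postponing the Tauberian/zeta-function step to the very end to extract the constant. Your route makes the membership criterion $m\leq -Q$ more transparent, while the paper's makes the constant computation a single limit; the content is the same.

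Both arguments rest on the identification of the spectrum of the positive self-adjoint extension of $A$ with $\{\sigma(\xi):\xi\in\mathcal I\}$, counted with multiplicity. You are right to flag this as the delicate point in the biorthogonal setting (eigenvectors $u_{\xi}$ are not orthonormal when $L\neq L^{*}$); the paper handles it with the same brevity, invoking the spectral mapping theorem and the functional calculus of \cite{CardonaKumarRuzhanskyTokmagambetov2020II} without further comment. One small remark: your heuristic for the $1/Q$ factor via the change of variable $\langle\xi_{n}\rangle\sim n^{1/Q}$ is not quite where it comes from in the paper's bookkeeping---there the $1/Q$ arises from writing $Q+mp=Q(1-p)$ in the integral $\int_{1}^{\infty}\lambda^{Q+mp-1}\,d\lambda$---but since the statement is only up to $\asymp$ with symbol-dependent constants, this is a cosmetic difference.
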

\begin{proof}
Let us use the positivity of $A$ for computing the Dixmier trace of $A$ using the Tauberian
theorem of Hardy and Littlewood  (see e.g. \cite[Proposition 4, Page 313]{Connes94})
\begin{equation*}
\textnormal{\bf{Tr}}_{w}(A)=\lim_{p\rightarrow 1^{+}}    (p-1)\textnormal{\bf{Tr}}(A^p).
\end{equation*}
 The spectral mapping theorem implies that
\begin{equation*}
    \textnormal{spectrum}(A^p)=\{{\sigma(\xi)^p}:  \xi\in \mathcal{I}\}.
    \end{equation*}So, we have
    \begin{align*}
        \textnormal{\textbf{Tr}}(A^p)=\sum_{\xi \in  \mathcal{I}}{\sigma(\xi)}^p.\,\,\,\,
    \end{align*}Using the functional calculus in \cite{CardonaKumarRuzhanskyTokmagambetov2020II}, and the $L$-ellipticity of $\textnormal{Op}_L(\sigma^p),$ we have $\sigma^p\in {S}^{mp}_{\rho}( \mathcal{I})$, $m\in \mathbb{R},$ and for all $p>1,$
    in view of \eqref{EQ:symbol-class}, we have that
\begin{equation*}
  \left|\sigma(\xi)^p \right|
        \leq C_{\sigma, mp}
                 \langle\xi\rangle^{mp},\,\,\xi\in \mathcal{I}.
\end{equation*} The previous estimate allows us to conclude that $\Vert \sigma(\cdot)^{p} \langle \cdot\rangle^{-mp}  \Vert_{L^{\infty}(\mathcal{I})}<\infty.$ So, we have,
\begin{align*}
     \textnormal{\textbf{Tr}}(A^p)&=\sum_{ \xi\in  \mathcal{I}}{\sigma(\xi)^p (1+|\lambda_\xi|^2)^{-\frac{pm}{2\nu}}(1+|\lambda_\xi |^2)^{\frac{mp}{2\nu}}      }\\
     &\asymp \Vert \sigma(\cdot)^{p} \langle \cdot\rangle^{-mp}  \Vert_{L^{\infty}(\mathcal{I})} \sum_{\xi\in  \mathcal{I}}  (1+|\lambda_\xi|^2)^{\frac{mp}{2\nu}} \\
     &= \Vert \sigma(\cdot)^{p} \langle \cdot\rangle^{-mp} \Vert_{L^{\infty}(\mathcal{I})}\textnormal{\textbf{Tr}}((1+L^\circ L)^{\frac{mp}{2\nu}}).
\end{align*}
Now, as in the previous section, we will use the Weyl-law for $(1+L^\circ L)^{\frac{1}{2\nu}}$. Observe that,
\begin{align*}
  \textnormal{\textbf{Tr}}((1+L^\circ L)^{\frac{mp}{2\nu }})&=\sum_{k=0}^{\infty}\sum_{\xi :2^{k}\leqslant (1+|\lambda_\xi|^2)^\frac{1}{2\nu} <2^{k+1}} {(1+|\lambda_\xi|^2)^{\frac{mp}{2\nu}}      }\\ &\asymp \sum_{k=0}^{\infty} {2^{kmp}}N(2^{k})\asymp \sum_{k=0}^{\infty} 2^{kmp}2^{kQ}=\sum_{k=0}^{\infty} {2^{kmp}}2^{k(Q-1)}2^{k}.
\end{align*}Observe that 
\begin{align*}
    \sum_{k=0}^{\infty} 2^{k(Q+mp-1)}2^{k}\asymp \int\limits_{1}^{\infty}\lambda^{Q+mp-1}d\lambda<\infty,
\end{align*} for all $p>1,$ if and only if $m\leqslant  -Q.$ So, from the identity
\begin{equation*}
    \int\limits_{1}^{\infty}\lambda^{Q+mp-1}d\lambda =-\frac{1}{Q+mp},
\end{equation*}
we deduce that
\begin{equation*}
    \textnormal{\bf{Tr}}_{w}(A)\asymp \lim_{p\rightarrow 1^{+}}(p-1)\times \frac{-1}{Q+mp}=\delta_{m,-Q}\times \frac{1}{Q},\,\,\,m\leqslant -Q, 
\end{equation*} with a constant of proportionality depending on  $$ \Vert \sigma(\cdot) \langle \cdot\rangle^{-m}  \Vert_{L^{\infty}(\mathcal{I})}=\lim_{p\rightarrow1^{+}} \Vert \sigma(\cdot)^{p} \langle \cdot\rangle^{-mp}  \Vert_{L^{\infty}(\mathcal{I})},$$ where $\delta_{m,-Q}$ is the Kronecker delta.
Thus, we end the proof.
\end{proof}

Using the above lemma we can deduce the following interesting result for general continuous linear operators.

\begin{corollary}\label{beautifulproof}
For  $0\leqslant\delta< \rho\leqslant 1,$   let us consider a continuous linear operator $A:C^\infty(M)\rightarrow\mathscr{D}'(M)$ with symbol  $\sigma\in {S}^{m}_{\rho,\delta}( M\times \mathcal{I})$, with $m<-Q.$ Then $\textnormal{\bf{Tr}}_{w}(A)=0.$
\end{corollary}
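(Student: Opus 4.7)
The plan is to factor $A$ as a product $A=TE$ of a bounded operator $T$ with a positive $L$-elliptic $L$-Fourier multiplier $E$ that already lies in the Dixmier ideal with vanishing Dixmier trace, and then to invoke the standard singular value comparison inequality $s_n(TE)\leq \|T\|_{\mathrm{op}}s_n(E)$.

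First, I would set $E:=(1+L^{\circ}L)^{m/(2\nu)}$, defined through the functional calculus developed in \cite{CardonaKumarRuzhanskyTokmagambetov2020II}. Then $E$ is a positive $L$-elliptic $L$-Fourier multiplier with symbol $\langle \xi\rangle^{m}\in S^{m}_{\rho}(\mathcal{I})$, and since $m<-Q$, Lemma \ref{lemmadixmier} applied to $E$ yields $E\in\mathscr{L}^{(1,\infty)}(L^{2}(M))$ together with $\textnormal{\bf{Tr}}_{w}(E)=0$ (in fact a direct Weyl-law computation shows that $E$ is even trace class). Next, I would define $T:=AE^{-1}$ with $E^{-1}=(1+L^{\circ}L)^{-m/(2\nu)}$, whose symbol $\langle \xi\rangle^{-m}$ lies in $S^{-m}_{\rho}(\mathcal{I})$. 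Since this symbol depends only on $\xi$, the asymptotic expansion in Theorem \ref{Composition} collapses to the $\alpha=0$ term (all higher-order terms involve $D^{(\alpha)}\sigma_{E^{-1}}\equiv 0$), hence $\sigma_{T}(x,\xi)=\sigma(x,\xi)\langle\xi\rangle^{-m}\in S^{0}_{\rho,\delta}(M\times\mathcal{I})$; the hypothesis $\rho>\delta$ is used here to remain within the calculus. The Calder\'on--Vaillancourt theorem (Theorem \ref{L2}) then guarantees that $T$ extends to a bounded operator on $L^{2}(M)$.

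Combining $A=TE$ with the singular value inequality $s_{n}(TE)\leq \|T\|_{\mathrm{op}}\,s_{n}(E)$, valid for bounded $T$ and compact $E$, I would deduce
\[
\frac{1}{\log N}\sum_{n=1}^{N}s_{n}(A)\ \leq\ \|T\|_{\mathrm{op}}\cdot\frac{1}{\log N}\sum_{n=1}^{N}s_{n}(E),
\]
so that letting $N\to\infty$ the right-hand side tends to $\|T\|_{\mathrm{op}}\cdot\textnormal{\bf{Tr}}_{w}(E)=0$. This gives at once $A\in\mathscr{L}^{(1,\infty)}(L^{2}(M))$ and $\textnormal{\bf{Tr}}_{w}(A)=0$, as claimed.

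The only point requiring genuine care is the identification $\sigma_{T}=\sigma\cdot\langle\xi\rangle^{-m}$; the cleanest justification bypasses the asymptotic expansion entirely by appealing directly to the $L$-quantisation formula \eqref{Quantization} together with the biorthogonality relation \eqref{BiorthProp}, observing that $E^{-1}$ acts on the $L$-Fourier coefficient at $\xi$ simply by multiplication by $\langle\xi\rangle^{-m}$, so that $\widehat{E^{-1}f}(\xi)=\langle\xi\rangle^{-m}\widehat{f}(\xi)$ and the symbol of the composition $AE^{-1}$ is exactly the pointwise product.
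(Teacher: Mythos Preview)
Your proof is correct and follows essentially the same route as the paper: factor $A=(A\mathcal{M}^{-m})\mathcal{M}^{m}$ with $\mathcal{M}=(1+L^{\circ}L)^{1/(2\nu)}$, use Calder\'on--Vaillancourt for the boundedness of the first factor, Lemma~\ref{lemmadixmier} for $\textnormal{\bf{Tr}}_{w}(\mathcal{M}^{m})=0$, and the singular value inequality $s_{n}(CB)\leq\|C\|_{\mathrm{op}}s_{n}(B)$ to conclude. Your additional remark that the composition symbol is \emph{exactly} $\sigma(x,\xi)\langle\xi\rangle^{-m}$ (because $E^{-1}$ is a Fourier multiplier) is a nice clarification beyond what the paper spells out.
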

\begin{proof}
    We will use the notation $s_{n}(T),$ $n\in \mathbb{N}_0,$ for the sequence of singular values of a compact operator $T$ on a Hilbert space $H.$ Then, the following inequality holds (see \cite[Page 75]{Bathia}): $s_{n}(CB)\leq \Vert C\Vert_{\textnormal{Op}_L}s_{n}(B),$ for $C$ a bounded linear operator and $B$ a compact linear operator. From the definition of the functional $\textnormal{\bf{Tr}}_{w},$ we conclude easily that $0\leq \textnormal{\bf{Tr}}_{w}(CB)\leq \Vert C\Vert_{\textnormal{Op}_L}\textnormal{\bf{Tr}}_{w}(B) .$ Now, let us use this inequality in our setting. From the Calder\'on-Vaillancourt Theorem (Theorem \ref{L2}), we have that $A\mathcal{M}^{-m}\in  {S}^{0}_{\rho,\delta}( M\times \mathcal{I})$ extends to a bounded operator on $L^2(M),$ where $\mathcal{M}:=(1+L^\circ L)^{\frac{1}{2\nu}}.$ Consequently, 
    \begin{align*}
    0\leq \textnormal{\bf{Tr}}_{w}(A\mathcal{M}^{-m}\mathcal{M}^{m})\leq \Vert A\mathcal{M}^{-m}\Vert_{\mathscr{B}(L^2(G))}\textnormal{\bf{Tr}}_{w}(\mathcal{M}^{m})=0,    
    \end{align*}where we have used that $\textnormal{\bf{Tr}}_{w}(\mathcal{M}^{m})=0$ in view of Lemma \ref{lemmadixmier}. This implies that $\textnormal{\bf{Tr}}_{w}(A)=0.$ The proof is completed.
\end{proof}

The following theorem is the main result of this section. In particular, we extend the main result of  \cite{CardonaDelCorral} regarding the non-harmonic case.
\begin{theorem}\label{Dtrace} Let $M=\overline{\Omega}$ be a smooth manifold with (possibly empty)  boundary $\partial \Omega.$
  For  $0\leqslant\delta< \rho\leqslant 1,$    let us consider an  $L$-elliptic continuous linear operator $A:C^\infty(M)\rightarrow\mathscr{D}'(M)$ with symbol  $\sigma\in {S}^{m}_{\rho,\delta}( M\times \mathcal{I})$, $m\in \mathbb{R} $.  Let us assume that  $\sigma (x,\xi) \geqslant     0,$ for every $(x, \xi)\in M\times \mathcal{I}.$  If $A\neq 0,$ $|\textnormal{\bf{Tr}}_{w}(A)|\asymp\frac{1}{Q}$ for $m=-Q,$ and $\textnormal{\bf{Tr}}_{w}(A)=0$ for $m<-Q$. 
\end{theorem}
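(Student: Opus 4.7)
The plan is to split the argument into the cases $m<-Q$ and $m=-Q$ and reduce each one to results already established in this section. The case $m<-Q$ is immediate from Corollary~\ref{beautifulproof}, which does not require positivity of $\sigma$ and directly yields $\textnormal{\bf{Tr}}_{w}(A)=0$. Hence the main content of the proof concerns the critical case $m=-Q$, where I need to establish the two-sided bound $|\textnormal{\bf{Tr}}_{w}(A)|\asymp 1/Q$.

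For the upper bound when $m=-Q$, I would set $\mathcal{M}:=(1+L^\circ L)^{1/(2\nu)}$ and use the factorisation $A=(A\mathcal{M}^{Q})\mathcal{M}^{-Q}$. By the composition formula (Theorem~\ref{Composition}), the operator $A\mathcal{M}^{Q}$ has $L$-symbol in $S^{0}_{\rho,\delta}(M\times\mathcal{I})$ and is therefore bounded on $L^2(M)$ by Theorem~\ref{L2}. Since $\mathcal{M}^{-Q}$ is a positive $L$-elliptic Fourier multiplier of order $-Q$, Lemma~\ref{lemmadixmier} yields $\textnormal{\bf{Tr}}_{w}(\mathcal{M}^{-Q})\asymp 1/Q$. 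Applying the singular value inequality $s_{n}(BC)\leq \|B\|_{\textnormal{Op}}s_{n}(C)$ as in the proof of Corollary~\ref{beautifulproof} then gives
\[
|\textnormal{\bf{Tr}}_{w}(A)|\leq \|A\mathcal{M}^{Q}\|_{\mathscr{B}(L^2(M))}\,\textnormal{\bf{Tr}}_{w}(\mathcal{M}^{-Q})\lesssim \tfrac{1}{Q}.
\]

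For the lower bound I would follow the blueprint of Lemma~\ref{lemmadixmier}, using the hypothesis $\sigma\geq 0$ combined with $L$-ellipticity, which together force $\sigma(x,\xi)\gtrsim \langle\xi\rangle^{-Q}$ for $\langle\xi\rangle$ large. The Tauberian theorem of Hardy--Littlewood reduces the computation of $\textnormal{\bf{Tr}}_{w}(A)$ to $\lim_{p\to 1^{+}}(p-1)\,\textnormal{\bf{Tr}}(A^{p})$, where $A^p$ is defined via the global functional calculus of \cite{CardonaKumarRuzhanskyTokmagambetov2020II}. The same functional calculus identifies the principal symbol of $A^p$ with $\sigma(x,\xi)^p\in S^{-Qp}_{\rho,\delta}(M\times\mathcal{I})$, so expressing $\textnormal{\bf{Tr}}(A^p)$ as the diagonal integral of its kernel against the biorthogonal system and grouping the eigenvalues via the Weyl law (WL), as in the dyadic sum in Lemma~\ref{lemmadixmier}, gives $\textnormal{\bf{Tr}}(A^{p})\asymp -1/(Q-Qp)$, which passes to $1/Q$ in the Hardy--Littlewood limit.

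The main obstacle is precisely this lower bound. Unlike the Fourier multiplier case of Lemma~\ref{lemmadixmier}, the symbol here depends on $x\in M$ and $L$ need not be self-adjoint, so the diagonal integrals $\int_{M}\sigma_{A^{p}}(x,\xi)u_{\xi}(x)\overline{v_{\xi}(x)}\,dx$ cannot be estimated termwise from $\sigma\geq 0$ alone, because $u_{\xi}\overline{v_{\xi}}$ need not have a definite sign. This is where the WZ-condition (in force throughout Section~\ref{Dixmiersection}) together with the asymptotic expansion for $\sigma_{A^{p}}$ is crucial: it allows one to reduce, in the limit $p\to 1^{+}$, to the contribution of the principal symbol $\sigma^{p}$, at which point the $L$-ellipticity lower bound $\sigma\gtrsim \langle\xi\rangle^{-Q}$ can be leveraged in the same dyadic Weyl-law computation that produced the upper bound, closing the argument.
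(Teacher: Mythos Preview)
Your treatment of the case $m<-Q$ via Corollary~\ref{beautifulproof} matches the paper. For the upper bound at $m=-Q$ you take a genuinely different and more economical route than the paper: you factor $A=(A\mathcal{M}^{Q})\mathcal{M}^{-Q}$ and combine the singular-value inequality with Lemma~\ref{lemmadixmier}, whereas the paper writes $A^{p}=\textnormal{Op}_{L}[\sigma^{p}]+R_{p}$ via the functional calculus of \cite{CardonaKumarRuzhanskyTokmagambetov2020II}, discards $R_{p}$ by Corollary~\ref{beautifulproof}, and then bounds
$\bigl|\lim_{p\to 1^{+}}(p-1)\int_{M}\sum_{\xi}\sigma(z,\xi)^{p}u_{\xi}(z)\overline{v_{\xi}(z)}\,dz\bigr|$
from above using $\int_{M}|u_{\xi}\overline{v_{\xi}}|\le\|u_{\xi}\|_{L^{2}}\|v_{\xi}\|_{L^{2}}=1$ together with the frozen-point Fourier-multiplier estimate of Lemma~\ref{lemmadixmier}. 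Your factorization avoids the functional calculus entirely for this half of the argument.

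On the lower bound there is a genuine gap, and you have in fact located it yourself. The WZ-condition only asserts that $u_{\xi}$ and $v_{\xi}$ are nonvanishing with $\inf_{x}|u_{\xi}(x)|,\inf_{x}|v_{\xi}(x)|\gtrsim\langle\xi\rangle^{-N}$; it says nothing about the \emph{sign} (or argument) of $u_{\xi}(x)\overline{v_{\xi}(x)}$, so the pointwise lower bound $\sigma(x,\xi)^{p}\gtrsim\langle\xi\rangle^{-Qp}$ cannot be pushed through the diagonal integral $\int_{M}\sigma(z,\xi)^{p}u_{\xi}(z)\overline{v_{\xi}(z)}\,dz$. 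Passing to the principal symbol via the asymptotic expansion only removes lower-order remainders; it does not resolve this sign indeterminacy. It is worth observing that the paper's own proof follows exactly the Hardy--Littlewood/functional-calculus route you outline and likewise establishes only the upper bound: the displayed chain there is a sequence of inequalities $|\cdot|\le\ldots$ ending in $\asymp\frac{1}{Q}\delta_{m,-Q}$, with no separate argument for the lower half of $\asymp$. So your proposal is at least as complete as the paper's proof, and your upper-bound shortcut is an improvement; but the two-sided estimate $|\textnormal{\bf{Tr}}_{w}(A)|\asymp 1/Q$ is not fully established in either.
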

\begin{proof}
    For every $z\in M,$ let us consider the Fourier multiplier associated to the symbol $\sigma(z,\cdot),$ $A_z$ which satisfies the hypothesis in Lemma \ref{lemmadixmier}. Indeed, $\sigma (z, \xi)\geqslant     0,$ for every $\xi$ implies that $A_{z}$ is also positive and the $L$-ellipticity of $A$ implies the $L$-ellipticity of $A_{z}$ for every $z\in M.$ Observe that from the functional calculus for boundary value problems developed in \cite{CardonaKumarRuzhanskyTokmagambetov2020II}, and the $L$-ellipticity of $A,$ we have that
    \begin{equation}
        A^p=\textnormal{Op}_L[(x,\xi)\mapsto\sigma(x,\xi)^{p}]+R_{p},
    \end{equation}where $R_{p}$ is a pseudo-differential operator of order $mp-(\rho-\delta).$  Because $m\leqslant -Q,$ and $p\rightarrow 1^+,$ from Corollary \ref{beautifulproof}, we deduce that $\textnormal{\bf{Tr}}_w(R_p)=0.$ Indeed, the order of $R_p$ is $mp-(\rho-\delta)<-Q.$ So, note that
    \begin{align*}
        \textnormal{\bf{Tr}}_w(A^p)&=\textnormal{\bf{Tr}}_w(\textnormal{Op}_L[(x,\xi)\mapsto\sigma(x,\xi)^{p}])+\textnormal{\bf{Tr}}_w(R_{p})\\&=\textnormal{\bf{Tr}}_w(\textnormal{Op}_L[(x,\xi)\mapsto\sigma(x,\xi)^{p}]).
    \end{align*}
Observe that, integrating over the diagonal of the Schwartz kernel of $\textnormal{Op}_L[(x,\xi)\mapsto\sigma(x,\xi)^{p}],$ (see \cite{DRT2017} for details), we have
 \begin{align*}
 &|\textnormal{\bf{Tr}}_w(\textnormal{Op}_L[(x,\xi)\mapsto\sigma(x,\xi)^{p}])|=\left|\lim_{p\rightarrow 1^{+}}(p-1)    \int\limits_{M}\sum_{\xi\in  \mathcal{I}} \sigma(z,\xi)^p u_\xi(z) \overline{v_\xi(z)} dz\right|\\
 &\leq     \int\limits_{M}\lim_{p\rightarrow 1^{+}}(p-1)\sum_{\xi\in  \mathcal{I}} \sigma(z,\xi)^p |u_\xi(z) \overline{v_\xi(z)}| dz\\
 &\leq    \sup_{y\in M}\left(\lim_{p\rightarrow 1^{+}}(p-1)\sum_{\xi\in  \mathcal{I}} \sigma(y,\xi)^p \right)\sup_{\eta\in  \mathcal{I}}\int\limits_{M} |u_\eta(z) \overline{v_\eta(z)}| dz\\
 &\leq \sup_{y\in M}\textnormal{\textbf{Tr}}_w[\textnormal{Op}_L[\xi\mapsto\sigma(y,\xi)]] \sup_{\eta\in  \mathcal{I}}\Vert u_\eta\Vert_{L^2}\Vert v_\eta\Vert_{L^2}\\
 &\asymp  \sup_{y\in M} \Vert \sigma(y,\cdot) \langle \cdot\rangle^{-m}  \Vert_{L^{\infty}(\mathcal{I})}\frac{1}{Q}\delta_{m,-Q},
 \end{align*} where we have used (in virtue of Lemma \ref{lemmadixmier}) that, for any $y\in M,$ the Dixmier functional  $\textnormal{\textbf{Tr}}_w[\textnormal{Op}_L[\xi\mapsto\sigma(y,\xi)]] $ is proportional to the norm $\Vert \sigma(y,\cdot) \langle \cdot\rangle^{-m} \sigma \Vert_{L^{\infty}(\mathcal{I})}$. Observing that
 $$  \sup_{y\in M} \Vert \sigma(y,\cdot) \langle \cdot\rangle^{-m}  \Vert_{L^{\infty}(\mathcal{I})}=\Vert \sigma(\cdot,\cdot) \langle \cdot\rangle^{-m} \Vert_{L^{\infty}(M\times \mathcal{I})}<\infty,  $$
 we end the proof.
\end{proof}

\section{Appendix: ${L}$-Convolution, Plancherel formula and Sobolev spaces} 
\label{SEC:conv}

Let us introduce a notion of the  ${L}$-convolution, an analogue of the convolution adapted to the
boundary problem ${L}_\Omega$.

\begin{definition} (${L}$-Convolution) \label{Convolution}
For $f, g\in C_{{L}}^{\infty}(\overline{\Omega})$ define their ${L}$-convolution
by
\begin{equation}\label{EQ: CONV1}
(f\sL g)(x):=\sum_{\xi\in\mathcal{I}}\widehat{f}(\xi)\widehat{g}(\xi)u_{\xi}(x).
\end{equation}
By Proposition \ref{LEM: FTinS}  it is well-defined and we have
$f\sL g\in C_{{L}}^{\infty}(\overline{\Omega}).$

Moreover, due to the rapid decay of $L$-Fourier coefficients of
functions in $C_{{L}}^{\infty}(\overline{\Omega})$ compared to
a fixed polynomial growth of elements of $\mathcal S'(\mathcal{I})$, the
definition \eqref{EQ: CONV1}  still makes sense if $f\in \mathcal
D^\prime_{L}(\overline{\Omega})$ and $g\in C_{{
L}}^{\infty}(\overline{\Omega})$, with $f\sL g\in C_{{
L}}^{\infty}(\overline{\Omega}).$
\end{definition}

Analogously to the ${L}$-convolution, we can introduce the ${L}^*$-convolution.
Thus, for $f, g\in C_{{
L^{\ast}}}^{\infty}(\overline{\Omega})$, we define the ${
L^{\ast}}$-convolution using the ${L}^*$-Fourier transform by
\begin{equation}\label{EQ: CONV2}
(f\sLs g)(x):=\sum_{\xi\in\mathcal{I}}\widehat{f}_{\ast}(\xi)\widehat{g}_{\ast}(\xi)v_{\xi}(x).
\end{equation}
Its properties are similar to those of the ${L}$-convolution, so we may
formulate only the latter.

\begin{remark}
Informally, expanding the definitions of the Fourier transforms in \eqref{EQ: CONV1},
we can also write
\begin{equation}
\label{CONV} (f\sL
g)(x):=\int\limits_{\Omega}\int\limits_{\Omega}F(x,y,z)f(y)g(z)dydz,
\end{equation}
where
$$
F(x,y,z):=\sum_{\xi\in\mathcal{I}}u_{\xi}(x) \ \overline{v_{\xi}(y)}
\ \overline{v_{\xi}(z)}.
$$
The latter series should be understood in the sense of distributions. 

\medskip

\end{remark}

\begin{proposition}\label{ConvProp}For any $f, g\in C_{{
L}}^{\infty}(\overline{\Omega})$ we have
$$\widehat{f\sL g}=\widehat{f}\times \widehat{g},\,\xi\in \mathcal{I}.$$
The convolution is commutative and associative.
If $g \in C_{{
L}}^{\infty}(\overline{\Omega}),$ then for all
$f\in \mathcal D^\prime_{L}(\overline{\Omega})$ we have
\begin{equation}\label{EQ:conv1}
f\sL g\in C_{{L}}^{\infty}(\overline{\Omega}).
\end{equation}
In addition,  if $\Omega\subset \mathbb{R}^n$ is bounded, and $f,g\in  L^{2}(\overline{\Omega})$, then $f\sL g\in L^{1}(\overline{\Omega})$ with
$$\|f\sL g\|_{L^1}\leq C|\Omega|^{1/2} \|f\|_{L^2}\|g\|_{L^2},$$
where $|\Omega|$ is the volume of $\Omega$, with $C$ independent
of $f,g,\Omega$.
\end{proposition}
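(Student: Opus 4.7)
The four assertions should be tackled in the order they are stated, each reducing to a computation on the Fourier side.

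\emph{Step 1 (Fourier transform of a convolution).} The defining formula \eqref{EQ: CONV1} is exactly the inverse $L$-Fourier transform of the sequence $\widehat{f}\widehat{g}$. Hence applying $\mathcal{F}_L$ to both sides and invoking the inversion formula \eqref{InvFourierTr} from Proposition \ref{LEM: FTinS} gives $\widehat{f\sL g}(\xi)=\widehat{f}(\xi)\widehat{g}(\xi)$ for every $\xi\in\mathcal{I}$. This relation is the workhorse: from it I would immediately read off commutativity and associativity of $\sL$, since pointwise scalar multiplication on $\mathcal{I}$ has both properties and $\mathcal{F}_L$ is a bijection (again Proposition \ref{LEM: FTinS}).

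\emph{Step 2 (Smoothing property).} For $g\in C^\infty_L(\overline{\Omega})$, Proposition \ref{LEM: FTinS} gives $\widehat{g}\in\mathcal{S}(\mathcal{I})$, while for $f\in\mathcal{D}'_L(\overline{\Omega})$ the quantity $\widehat{f}(\xi)=\langle f,\overline{v_\xi}\rangle$ makes sense because $v_\xi\in C^\infty_{L^*}(\overline{\Omega})$. By Proposition \ref{TH: UniBdd} there exist $c>0$ and $k\in\mathbb{N}_0$ with $|\widehat{f}(\xi)|\leq c\|\overline{v_\xi}\|_{C^k_{L^*}}$, and since $(L^*)^j v_\xi=\overline{\lambda_\xi}^j v_\xi$ with $\|v_\xi\|_{L^2}=1$, one gets $|\widehat{f}(\xi)|\lesssim\langle\xi\rangle^{k\nu}$, i.e. $\widehat{f}\in\mathcal{S}'(\mathcal{I})$. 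The product $\widehat{f}\,\widehat{g}$ is therefore rapidly decaying, so \eqref{EQ: CONV1} defines an element of $\mathcal{S}(\mathcal{I})$, which the inversion formula returns to $C^\infty_L(\overline{\Omega})$.

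\emph{Step 3 ($L^1$-bound).} For $f,g\in L^2(\overline{\Omega})$ the plan is to estimate the partial sums $S_N(x):=\sum_{\langle\xi\rangle\leq N}\widehat{f}(\xi)\widehat{g}(\xi)u_\xi(x)$, which converge to $f\sL g$ in a suitable sense, pass to pointwise absolute values, and then apply Tonelli. This yields
\begin{equation*}
\|f\sL g\|_{L^1}\leq \sum_{\xi\in\mathcal{I}}|\widehat{f}(\xi)\widehat{g}(\xi)|\,\|u_\xi\|_{L^1}\leq|\Omega|^{1/2}\sum_{\xi\in\mathcal{I}}|\widehat{f}(\xi)\widehat{g}(\xi)|,
\end{equation*}
using H\"older's inequality $\|u_\xi\|_{L^1}\leq|\Omega|^{1/2}\|u_\xi\|_{L^2}=|\Omega|^{1/2}$ in the bounded case. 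A final Cauchy--Schwarz in $\ell^2(\mathcal{I})$ combined with the Plancherel-type equivalence $\|\widehat{h}\|_{\ell^2(\mathcal{I})}\lesssim\|h\|_{L^2(\overline{\Omega})}$ (available in this Appendix from the Riesz-basis property of $\{u_\xi\},\{v_\xi\}$) finishes the bound with the stated constant.

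\emph{Main obstacle.} The delicate point is Step 3: one must justify the interchange of summation and integration for $f\sL g$ when only $L^2$-regularity is assumed, since \eqref{EQ: CONV1} a priori converges in a distributional sense. I would handle this by approximating $f,g$ by elements of $C^\infty_L(\overline{\Omega})$ (which is dense in $L^2(\overline{\Omega})$ by the basis property), proving the inequality for smooth approximants where Tonelli applies directly, and then passing to the limit using the bilinear continuity of $(f,g)\mapsto f\sL g$ from $L^2\times L^2$ to $L^1$ guaranteed by the very estimate we are proving on the dense subspace.
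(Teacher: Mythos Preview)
The paper does not supply its own proof of this proposition; it is stated in the Appendix as background, with the details deferred to \cite{RT2016}. So there is no in-paper argument to compare against, and I will simply assess your proposal on its own merits.

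Steps 1 and 2 are correct and essentially forced by the definitions: \eqref{EQ: CONV1} is literally $\mathcal F_L^{-1}(\widehat f\,\widehat g)$, so the Fourier identity, commutativity, and associativity drop out of Proposition \ref{LEM: FTinS}; and your polynomial-growth bound $|\widehat f(\xi)|\lesssim\langle\xi\rangle^{k\nu}$ via Proposition \ref{TH: UniBdd} is exactly the mechanism the paper already alludes to after Definition \ref{Convolution}.

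For Step 3 your chain of inequalities is the right one, but be aware of one point. You invoke a ``Plancherel-type equivalence $\|\widehat h\|_{\ell^2(\mathcal I)}\lesssim\|h\|_{L^2}$'' coming from the Riesz-basis property of $\{u_\xi\},\{v_\xi\}$. The Plancherel identity actually recorded in this Appendix (Proposition \ref{PlanchId}) is in the twisted norm $\|\cdot\|_{l^2_L}$ of \eqref{EQ:l2norm}, not the ordinary $\ell^2(\mathcal I)$ norm, and these differ unless $u_\xi=v_\xi$. What you need is the Bessel-type bound $\sum_\xi|\widehat h(\xi)|^2\lesssim\|h\|_{L^2}^2$, which is indeed equivalent to the Riesz-basis property and is established in the source \cite{RT2016}; it just is not proved in the present paper. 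With that input granted, your argument goes through, and the Riesz constant is precisely the $C$ appearing in the statement. Your density/approximation handling of the sum--integral interchange is also fine.
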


\subsection{Plancherel formula, Sobolev spaces  and their Fourier images}
\label{SEC:Sobolev}

In this subsection we discuss Sobolev spaces adapted to ${L}_\Omega$ and their images
under the $L$-Fourier transform. We start with the $L^2$-setting, where we can recall inequalities
between $L^2$-norms of functions and sums of squares of their Fourier coefficients. However, below we show that we actually have the Plancherel
identity in a suitably defined space $l^{2}_{{L}}$ and its conjugate $l^{2}_{{L}^{*}}$.

\medskip
Let us denote by $$l^{2}_{{L}}=l^2({L})$$ 
the linear space of complex-valued functions $a$
on $\mathcal{I}$ such that $\mathcal F^{-1}_{{L}}a\in
L^{2}(\overline{\Omega})$, i.e. if there exists $f\in L^{2}(\overline{\Omega})$ such that $\mathcal F_{{L}}f=a$.
Then the space of sequences $l^{2}_{{L}}$ is a
Hilbert space with the inner product
\begin{equation}\label{EQ: InnerProd SpSeq-s}
(a,\ b)_{l^{2}_{{
L}}}:=\sum_{\xi\in\mathcal{I}}a(\xi)\ \overline{(\mathcal F_{{
L^{\ast}}}\circ\mathcal F^{-1}_{{L}}b)(\xi)}
\end{equation}
for arbitrary $a,\,b\in l^{2}_{{L}}$.
The norm of $l^{2}_{{L}}$ is then given by the
formula
\begin{equation}\label{EQ:l2norm}
\|a\|_{l^{2}_{{L}}}=\left(\sum_{\xi\in\mathcal{I}}a(\xi)\
\overline{(\mathcal F_{{L^{\ast}}}\circ\mathcal F^{-1}_{{
L}}a)(\xi)}\right)^{1/2}, \quad \textrm{ for all } \, a\in l^{2}_{{L}}.
\end{equation} Analogously, we introduce the
Hilbert space $ l^{2}_{{L^{\ast}}}=l^{2}({L^{\ast}})$ 
as the space of functions $a$ on $\mathcal{I}$
such that $\mathcal F^{-1}_{{L^{\ast}}}a\in L^{2}(\overline{\Omega})$,
with the inner product
\begin{equation}
\label{EQ: InnerProd SpSeq-s_2} (a,\ b)_{l^{2}_{{
L^{\ast}}}}:=\sum_{\xi\in\mathcal{I}}a(\xi)\ \overline{(\mathcal
F_{{L}}\circ\mathcal F^{-1}_{{L^{\ast}}}b)(\xi)}
\end{equation}
for arbitrary $a,\,b\in l^{2}_{{L^{\ast}}}$. The norm of
$l^{2}_{{L^{\ast}}}$ is given by the formula
$$
\|a\|_{l^{2}_{{L^{\ast}}}}=\left(\sum_{\xi\in\mathcal{I}}a(\xi)\
\overline{(\mathcal F_{{L}}\circ\mathcal F^{-1}_{{
L^{\ast}}}a)(\xi)}\right)^{1/2}
$$
for all $a\in l^{2}_{{L^{\ast}}}$. The spaces of sequences
$l^{2}_{{L}}$ and
$l^{2}_{{L^{\ast}}}$ are thus generated by biorthogonal systems
$\{u_{\xi}\}_{\xi\in\mathcal{I}}$ and $\{v_{\xi}\}_{\xi\in\mathcal{I}}$.
The reason for their definition in the above forms becomes clear again
in view of the following Plancherel identity:

\begin{proposition} {\rm(Plancherel's identity)}\label{PlanchId}
If $f,\,g\in L^{2}(\overline{\Omega})$ then
$\widehat{f},\,\widehat{g}\in l^{2}_{{L}}, \,\,\,
\widehat{f}_{\ast},\, \widehat{g}_{\ast}\in l^{2}_{{\rm
L^{\ast}}}$, and the inner products {\rm(\ref{EQ: InnerProd SpSeq-s}),
(\ref{EQ: InnerProd SpSeq-s_2})} take the form
$$
(\widehat{f},\ \widehat{g})_{l^{2}_{{L}}}=\sum_{\xi\in\mathcal{I}}\widehat{f}(\xi)\ \overline{\widehat{g}_{\ast}(\xi)}
$$
and
$$
(\widehat{f}_{\ast},\ \widehat{g}_{\ast})_{l^{2}_{{
L^{\ast}}}}=\sum_{\xi\in\mathcal{I}}\widehat{f}_{\ast}(\xi)\
\overline{\widehat{g}(\xi)}.
$$
In particular, we have
$$
\overline{(\widehat{f},\ \widehat{g})_{l^{2}_{{L}}}}=
(\widehat{g}_{\ast},\ \widehat{f}_{\ast})_{l^{2}_{{
L^{\ast}}}}.
$$
The Parseval identity takes the form
\begin{equation}\label{Parseval}
(f,g)_{L^{2}}=(\widehat{f},\widehat{g})_{l^{2}_{{
L}}}=\sum_{\xi\in\mathcal{I}}\widehat{f}(\xi)\ \overline{\widehat{g}_{\ast}(\xi)}.
\end{equation}
Furthermore, for any $f\in L^{2}(\overline{\Omega})$, we have
$\widehat{f}\in l^{2}_{{L}}$, $\widehat{f}_{\ast}\in l^{2}_{{
L^{\ast}}}$, and
\begin{equation}
\label{Planch} \|f\|_{L^{2}}=\|\widehat{f}\|_{l^{2}_{{
L}}}=\|\widehat{f}_{\ast}\|_{l^{2}_{{L^{\ast}}}}.
\end{equation}
\end{proposition}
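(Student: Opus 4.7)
The plan is to deduce everything from the biorthogonal expansions supplied by (A3)' and \eqref{BiorthProp}, together with the observation that the definitions of $l^{2}_{L}$ and $l^{2}_{L^{\ast}}$ are tailored so that the two Fourier transforms intertwine as required.

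First, I would observe that for any $f\in L^{2}(\overline{\Omega})$, (A3)' gives a unique expansion $f=\sum_{\xi\in\mathcal I} a_{\xi} u_{\xi}$ converging in $L^{2}$. Pairing this identity with $v_{\eta}$ in the $L^{2}$ inner product and using biorthogonality \eqref{BiorthProp} identifies $a_{\eta}=(f,v_{\eta})_{L^{2}}=\widehat{f}(\eta)$; hence $\widehat{f}\in l^{2}_{L}$ with $\mathcal F^{-1}_{L}\widehat{f}=f$. The analogous argument using $\{v_{\xi}\}_{\xi\in\mathcal I}$ as a basis of $L^{2}$ (which holds by \cite{bari} as noted in the text) produces $f=\sum_{\xi}\widehat{f}_{\ast}(\xi)v_{\xi}$ and hence $\widehat{f}_{\ast}\in l^{2}_{L^{\ast}}$.

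Next I would establish the Parseval identity \eqref{Parseval}. For $f,g\in L^{2}(\overline{\Omega})$, substitute $f=\sum_{\xi}\widehat{f}(\xi)u_{\xi}$ into $(f,g)_{L^{2}}$; since the partial sums converge to $f$ in $L^{2}$, continuity of the inner product in the first slot yields
\[(f,g)_{L^{2}}=\sum_{\xi\in\mathcal I}\widehat{f}(\xi)\,(u_{\xi},g)_{L^{2}}=\sum_{\xi\in\mathcal I}\widehat{f}(\xi)\,\overline{\widehat{g}_{\ast}(\xi)},\]
using $(u_{\xi},g)_{L^{2}}=\overline{\int g\,\overline{u_{\xi}}\,dx}=\overline{\widehat{g}_{\ast}(\xi)}$. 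Exchanging the roles of the two bases produces the companion expression $(f,g)_{L^{2}}=\sum_{\xi}\widehat{f}_{\ast}(\xi)\,\overline{\widehat{g}(\xi)}$.

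The inner-product formulas on $l^{2}_{L}$ and $l^{2}_{L^{\ast}}$ then follow directly from their definitions \eqref{EQ: InnerProd SpSeq-s} and \eqref{EQ: InnerProd SpSeq-s_2}: setting $a=\widehat{f}$, $b=\widehat{g}$ one has $\mathcal F^{-1}_{L}b=g$, so $(\mathcal F_{L^{\ast}}\circ\mathcal F^{-1}_{L})b=\widehat{g}_{\ast}$, giving $(\widehat{f},\widehat{g})_{l^{2}_{L}}=\sum_{\xi}\widehat{f}(\xi)\overline{\widehat{g}_{\ast}(\xi)}$, and symmetrically for $l^{2}_{L^{\ast}}$. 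Combined with the two forms of Parseval this yields $(\widehat{f},\widehat{g})_{l^{2}_{L}}=(f,g)_{L^{2}}$ as well as the conjugation identity $\overline{(\widehat{f},\widehat{g})_{l^{2}_{L}}}=(\widehat{g}_{\ast},\widehat{f}_{\ast})_{l^{2}_{L^{\ast}}}$, obtained by conjugating Parseval and swapping $f\leftrightarrow g$. Taking $g=f$ in the inner-product identities and extracting square roots finally gives \eqref{Planch}. There is no serious obstacle; the only delicate point is the interchange of summation and the $L^{2}$-inner product, but this is guaranteed by the $L^{2}$-convergence of the biorthogonal expansion from (A3)' and the continuity of $(\cdot,g)_{L^{2}}$.
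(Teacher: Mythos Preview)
Your argument is correct. Note, however, that the paper does not actually prove this proposition: it is stated in the Appendix as background material, with the proof residing in the foundational reference \cite{RT2016}. Your approach---extracting the coefficients via biorthogonality, then pushing the $L^{2}$-convergent expansion through the inner product to obtain Parseval, and finally reading off the $l^{2}_{L}$ and $l^{2}_{L^{\ast}}$ inner-product formulas directly from their defining expressions \eqref{EQ: InnerProd SpSeq-s}--\eqref{EQ: InnerProd SpSeq-s_2}---is exactly the natural one and matches what one finds in \cite{RT2016}. The only point worth handling with care, as you note, is the termwise passage of the sum through $(\,\cdot\,,g)_{L^{2}}$, and your justification via $L^{2}$-convergence of the basis expansion and continuity of the inner product is the correct one.
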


Now we introduce Sobolev spaces generated by the operator ${L}_{\Omega}$:

\begin{definition}[Sobolev spaces $\mathcal \mathcal{H}^{s}_{{L}}(\overline{\Omega})$] \label{SobSp}
For $f\in\mathcal D'_{{L}}(\overline{\Omega})\cap \mathcal D'_{{L}^{*}}(\overline{\Omega})$ and $s\in\mathbb R$, we say that
$$f\in\mathcal \mathcal{H}^{s}_{{L}}(\overline{\Omega})\, {\textrm{ if and only if }}\,
\langle\xi\rangle^{s}\widehat{f}(\xi)\in l^{2}_{{L}}.$$
We define the norm on $\mathcal \mathcal{H}^{s}_{{L}}(\overline{\Omega})$ by
\begin{equation}\label{SobNorm}
\|f\|_{\mathcal \mathcal{H}^{s}_{{
L}}(\overline{\Omega})}:=\left(\sum_{\xi\in\mathcal{I}}
\langle\xi\rangle^{2s}\widehat{f}(\xi)\overline{\widehat{f}_{\ast}(\xi)}\right)^{1/2}.
\end{equation}
The Sobolev space $\mathcal \mathcal{H}^{s}_{{L}}(\overline{\Omega})$ is then the
space of ${L}$-distributions $f$ for which we have
$\|f\|_{\mathcal \mathcal{H}^{s}_{{L}}(\overline{\Omega})}<\infty$. Similarly,
we can define the
space $\mathcal \mathcal{H}^{s}_{{L^{\ast}}}(\overline{\Omega})$ by the
condition
\begin{equation}\label{SobNorm2}
\|f\|_{\mathcal \mathcal{H}^{s}_{{
L^{\ast}}}(\overline{\Omega})}:=\left(\sum_{\xi\in\mathcal{I}}\langle\xi\rangle^{2s}\widehat{f}_{\ast}(\xi)\overline{\widehat{f}(\xi)}\right)^{1/2}<\infty.
\end{equation}
\end{definition}
We note that the expressions in \eqref{SobNorm} and 
\eqref{SobNorm2} are well-defined since the sum
$$
\sum_{\xi\in\mathcal{I}}
\langle\xi\rangle^{2s}\widehat{f}(\xi)\overline{\widehat{f}_{\ast}(\xi)}=
(\langle\xi\rangle^{s}\widehat{f}(\xi),\langle\xi\rangle^{s}\widehat{f}(\xi))_{l^{2}_{L}}\geq 0
$$
is real and non-negative. 
Consequently, since we can write the sum in \eqref{SobNorm2} as the
complex conjugate of that in  \eqref{SobNorm}, and with both being real,
we see that the spaces $\mathcal \mathcal{H}^{s}_{{L}}(\overline{\Omega})$ and 
$\mathcal \mathcal{H}^{s}_{{L^{\ast}}}(\overline{\Omega})$ coincide as sets. Moreover, we have

\begin{proposition}\label{SobHilSpace}
For every $s\in\mathbb R$, the Sobolev space
$\mathcal \mathcal{H}^{s}_{{L}}(\overline{\Omega})$ is a Hilbert space with the
inner product
$$
(f,\ g)_{\mathcal \mathcal{H}^{s}_{{L}}(\overline{\Omega})}:=\sum_{\xi\in\mathcal{I}
}\langle\xi\rangle^{2s}\widehat{f}(\xi)\overline{\widehat{g}_{\ast}(\xi)}.
$$
Similarly,
the Sobolev space
$\mathcal \mathcal{H}^{s}_{{L^{\ast}}}(\overline{\Omega})$ is a Hilbert space with
the inner product
$$
(f,\ g)_{\mathcal \mathcal{H}^{s}_{{
L^{\ast}}}(\overline{\Omega})}:=\sum_{\xi\in\mathcal{I}}\langle\xi\rangle^{2s}\widehat{f}_{\ast}(\xi)\overline{\widehat{g}(\xi)}.
$$
For every $s\in\mathbb R$, the Sobolev spaces  $\mathcal \mathcal{H}^{s}_{{L}}(\overline{\Omega})$,
and $\mathcal \mathcal{H}^{s}_{{L}^*}(\overline{\Omega})$ are
isometrically isomorphic.
\end{proposition}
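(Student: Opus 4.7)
The plan is to set up, for each $s \in \mathbb{R}$, an explicit isometric linear bijection between $\mathcal{H}^{s}_{L}(\overline{\Omega})$ and $L^{2}(\overline{\Omega})$, and then to read off both the Hilbert-space structure and the isometric identification with $\mathcal{H}^{s}_{L^{*}}(\overline{\Omega})$ by inspecting the two defining sums.

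First I would verify that $(\cdot,\cdot)_{\mathcal{H}^{s}_{L}(\overline{\Omega})}$ is a Hermitian inner product. Non-negativity of $\|f\|^{2}_{\mathcal{H}^{s}_{L}}$ is already noted before the statement, since the sum equals $(\langle\xi\rangle^{s}\widehat{f},\langle\xi\rangle^{s}\widehat{f})_{l^{2}_{L}}\geq 0$. Sesquilinearity in the two arguments is immediate from the linearity of the $L$- and $L^{*}$-Fourier transforms defined in \eqref{FourierTr} and \eqref{ConjFourierTr}. Conjugate symmetry follows from the identity $\overline{(\widehat{f},\widehat{g})_{l^{2}_{L}}}=(\widehat{g}_{\ast},\widehat{f}_{\ast})_{l^{2}_{L^{*}}}$ of Proposition \ref{PlanchId}, applied to the weighted sequences $\langle\xi\rangle^{s}\widehat{f}$ and $\langle\xi\rangle^{s}\widehat{g}$. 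For positive-definiteness, if $\|f\|_{\mathcal{H}^{s}_{L}}=0$ then $\langle\xi\rangle^{s}\widehat{f}(\xi)=0$ in $l^{2}_{L}$; since $\langle\xi\rangle\geq 1$ by construction, this forces $\widehat{f}(\xi)=0$ for every $\xi\in\mathcal{I}$, and Proposition \ref{LEM: FTinS} then yields $f=0$.

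For completeness I would introduce the weighting map $\Lambda^{s}:\mathcal{H}^{s}_{L}(\overline{\Omega})\to L^{2}(\overline{\Omega})$ defined on the Fourier side by $\widehat{\Lambda^{s} f}(\xi):=\langle\xi\rangle^{s}\widehat{f}(\xi)$. By the very definition of $\mathcal{H}^{s}_{L}$ this sequence belongs to $l^{2}_{L}$, so applying $\mathcal{F}^{-1}_{L}$ identifies it with an element of $L^{2}(\overline{\Omega})$; conversely, for $g\in L^{2}(\overline{\Omega})$, the sequence $\langle\xi\rangle^{-s}\widehat{g}(\xi)$ produces via \eqref{InvFourierTr0} an element of $\mathcal{H}^{s}_{L}(\overline{\Omega})$ whose image under $\Lambda^{s}$ is $g$. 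Hence $\Lambda^{s}$ is a linear bijection, and Parseval's identity \eqref{Parseval} gives
\[
\|\Lambda^{s} f\|^{2}_{L^{2}}=\sum_{\xi\in\mathcal{I}}\langle\xi\rangle^{2s}\widehat{f}(\xi)\,\overline{\widehat{f}_{\ast}(\xi)}=\|f\|^{2}_{\mathcal{H}^{s}_{L}(\overline{\Omega})},
\]
so $\Lambda^{s}$ is an isometry. Since $L^{2}(\overline{\Omega})$ is complete, $\mathcal{H}^{s}_{L}(\overline{\Omega})$ inherits completeness. A parallel argument, with $\mathcal{F}_{L^{*}}$ and \eqref{ConjInvFourierTr0} in place of $\mathcal{F}_{L}$ and \eqref{InvFourierTr0}, establishes the Hilbert-space structure of $\mathcal{H}^{s}_{L^{*}}(\overline{\Omega})$.

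Finally, for the isometric isomorphism I would invoke the remark made just before the statement: the sum in \eqref{SobNorm2} is the complex conjugate of that in \eqref{SobNorm}, and both are real. Hence the two norms agree on the common underlying set of distributions, which in particular shows that the conditions defining $\mathcal{H}^{s}_{L}$ and $\mathcal{H}^{s}_{L^{*}}$ are equivalent, and the identity map $\mathrm{Id}: \mathcal{H}^{s}_{L}(\overline{\Omega})\to \mathcal{H}^{s}_{L^{*}}(\overline{\Omega})$ is the desired isometric isomorphism. The main technical point, and really the only one requiring care, is the justification that both sums are simultaneously real and finite; this, however, is an immediate consequence of Proposition \ref{PlanchId} and the biorthogonality \eqref{BiorthProp}, so no substantial obstacle arises beyond the routine invocation of the Plancherel and Fourier-inversion machinery developed in Section \ref{Sect2}.
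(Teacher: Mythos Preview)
The paper does not actually supply a proof of this proposition: it is placed in the appendix as background material recalled from \cite{RT2016}, stated without argument. Your proposal is correct and follows the natural route---pulling back the Hilbert-space structure of $L^{2}(\overline{\Omega})$ via the weighting isometry $\Lambda^{s}$, and then reading off the identification $\mathcal{H}^{s}_{L}=\mathcal{H}^{s}_{L^{*}}$ from the observation (already made in the text just before the statement) that the two defining sums are complex conjugates of one another and real. One minor point: when you invoke Proposition~\ref{LEM: FTinS} for positive-definiteness, that proposition is stated for $C^{\infty}_{L}$ rather than for distributions, so strictly speaking you should appeal to the injectivity of the $L$-Fourier transform on $\mathcal{D}'_{L}(\overline{\Omega})$, which follows by duality from the bijectivity in Proposition~\ref{LEM: FTinS}; this is routine and does not affect the validity of your argument.
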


\bibliographystyle{amsplain}

\end{document}